\newcommand{\old}[1]{}
\theoremstyle{plain}
\newtheorem{thm}{Theorem}[section]
\newtheorem{lem}[thm]{Lemma}
\newtheorem{conj}{Conjecture}
\newtheorem{cor}[thm]{Corollary}
\newtheorem{prop}[thm]{Proposition}
\theoremstyle{definition}
\newtheorem{defn}[thm]{Definition}
\newtheorem{ex}[thm]{Example}
\newtheorem{rk}[thm]{Remark}
\newtheorem{qn}[thm]{Question}
\def\reals{{\mathbb R}}
\def\bc{{\bf c}}
\title[1-skeleta of polytopes as posets]{Posets arising as 1-skeleta of simple polytopes, the nonrevisiting path conjecture, and poset topology}
\author{Patricia Hersh} 
\address{Department of Mathematics, University of Oregon, Eugene, OR 97403}
\email{plhersh@uoregon.edu}
\thanks{
 This work was supported by 
NSF grants DMS-1200730, DMS-1500987, DMS-1953931 and NSF conference grant DMS-1101740.
}
\subjclass[2010]{06A07, 52B05, 52B12,  05E45,  90C05, 05C12}
\begin{document}

\begin{abstract}
Given any polytope $P$ and any  generic linear functional ${\bf c} $, one obtains  a directed graph $G(P,\bc)$  from  the 1-skeleton of $P$  by orienting each edge $e(u,v)$ from $u$ to $v$ for $\bc \cdot u <\bc  \cdot v$.    
For $P$  a simple polytope  and $G(P,\bc )$  the Hasse diagram of a lattice $L$, 
 the join of any collection $S$  of elements which  all cover a common element $u$  in $L$  is proven to equal the sink of the smallest face  of $P$ containing $u$ and all of the  elements of $S$.  
The author conjectures 
for such  $G(P,\bc )$ 
that no directed path in $G(P,\bc )$ ever revisits any facet of $P$.  
This   
would imply for such $P$ and $\bc $ that the simplex method for linear programming is efficient under all possible  pivot rules.   This conjecture is proven for 3-polytopes and  for spindles. 

For simple polytopes in which $G(P,\bc)$  is the Hasse diagram of a  lattice  $L$, 
the order complex of each open interval in  $L$  is proven  
 homotopy equivalent to a ball or a sphere. 
  Applications  are given to the 
weak Bruhat order,  the Tamari lattice, and  
the
Cambrian lattices.
 
This paper concludes with  an 
appendix by Dominik Preu\ss  \hspace{.02in} proving the 
monotone Hirsch conjecture for  $P$ a simple polytope and   $G(P,\bc )$ the Hasse diagram of a lattice.  This  confirms one of the main consequences that  
 the author's   conjecture would have. \\
\\
\noindent \emph{Keywords:}  
 nonrevisiting path conjecture,  poset topology,  
weak order, simplex method, 
 associahedron, permutahedron.\\
 \\
 Data availability statement: Data sharing not applicable to this article as no datasets were generated or analysed during the current study.
\end{abstract}

\maketitle

\section{Introduction}\label{intro-section}

This paper undertakes a study of directed graphs $G(P,\bc )$ which are 
the 1-skeleta of simple polytopes $P$  
in $\reals^d $ with each edge $e_{u,v}$ of the polytope  
oriented in the direction that ``cost''    increases, focusing on the case when $G(P,\bc)$ is also the Hasse diagram of a partially ordered set (poset). 
 The {\bf cost} of a vertex $v$ in a polytope $P$ is  the dot product
$\bc \cdot v$ where $\bc $ is a fixed vector in $\reals^d$ known as the cost vector.    
A major reason there has been as much interest  as there has been over the last several decades 
in  
the graphs $G(P,\bc )$ is the following connection to the simplex method for linear programming.   The task of 
linear programming  is  to 
find  the point $\bf{v} $  in a polytope (or polyhedron or more general convex body) where the cost $\bc \cdot \bf{v}$  is maximized (or minimized); the simplex method accomplishes this by starting at a vertex  of $P$ and  moving greedily  along the edges of  the 1-skeleton of $P$  
 in a manner that increases   (resp. decreases)
  cost at each step, or  in other words along the directed edges of $G(P,\bc )$, 
 until reaching the sink (resp. source) of $G(P,\bc )$.   For this reason, there is    
particular interest in  
 upper bounds on the 
  diameter of $G(P,\bc)$ and on the length of the   longest  path 
 in $G(P,\bc )$.  

An important  approach to proving such upper bounds  was a conjecture known as the nonrevisiting path conjecture.  This posited  the existence of a path from any vertex  $u$ to any vertex  $v$ of $P$  in the undirected version of the graph  $G(P,\bc )$ with the property that the path  would not revisit any facet (i.e. maximal face in the boundary of $P$)  from which it had departed.    
While this conjecture was proven false by Santos in \cite{Sa}, there is still interest in finding  classes of polytopes and cost vectors for which it does hold.  We conjecture 
for  $P$ a simple polytope and $G(P,\bc )$ the Hasse diagram of a lattice that something 
  far stronger than the nonrevisiting path conjecture should be true.  
  See Conjecture ~\ref{main-conj}.
We prove a number of results regarding the graphs $G(P,\bc )$ under the assumptions that $P$ is a simple polytope and that 
$G(P,\bc )$ is the Hasse diagram of either  a 
poset or more specifically of a lattice.  We are hopeful that these results may be useful  steps towards proving Conjecture ~\ref{main-conj}.   
Our starting point for this work is a pair of observations, the former of which is well-known and the latter of which is proven later in the paper: 
\begin{enumerate}
\item
Requiring ``monotone paths''  
 in the 1-skeleton of a polytope  (namely paths where cost increases at each step)
never to revisit any facets (a property we call the {\bf nonrevisiting property})  
implies that they also cannot revisit any faces of any dimension,  
by virtue of each face in a polytope being a finite intersection of facets.
\item
Requiring monotone paths in the 1-skeleton of a polytope $P$ never to revisit any 1-dimensional faces 
is equivalent to requiring $G(P,\bc )$ to be the Hasse diagram of a finite partially ordered set (poset).  This will be proven in Lemma ~\ref{1-d-revisiting}.
\end{enumerate}

Combining these observations, notice that  if $G(P,\bc )$ has  the nonrevisiting property,
then $G(P,\bc )$ must  
be the Hasse diagram of a poset  (in which case we say that $G(P,\bc )$ has the {\bf Hasse diagram property}).   
%
 Spurred  on by these observations, 
this paper  combines poset theoretic techniques with ideas from discrete geometry  to prove structural results regarding  $G(P,\bc )$  in the case when $P$ is a simple polytope (see Section ~\ref{disc-morse-bg-section} for the definition)  and  $G(P,\bc )$ is the Hasse diagram of a poset or more specifically of a lattice (also  reviewed in 
Section ~\ref{disc-morse-bg-section}).  

  One motivation  for  studying polytopes with this  
  nonrevisiting property is that it   
  implies an upper bound of $n-d$ 
on the  directed diameter of 
$G(P,\bc)$  
where $n$ is the number of facets in $P$ and $d$  is the dimension of $P$.   In fact it  
 forces the simplex method for  linear programming to run on $P$ with cost vector $\bc $ 
 in at most $n-d$ steps 
 regardless of choice of pivot rule (another notion 
 reviewed in Section ~\ref{disc-morse-bg-section}). 
%
A second, quite different type of  motivation for our work  
is that lattices  whose Hasse diagrams may be realized as 1-skeleta of simple polytopes will turn out to have well-controlled  topological structure, as will be proven  in Theorem ~\ref{thm-2}; this gives a new and unified  explanation why several classes of posets all  have  all of their  open  intervals homotopy equivalent to balls or spheres.   Turning this around, Theorem ~\ref{thm-2} 
may give some new insight into the question of which Hasse diagrams of posets may be realized as 1-skeleta of (simple) polytopes.  See  Remark ~\ref{which-Hasse} for more on this. 
The famous Klee-Minty cubes (introduced in  \cite{KM} and discussed more  in 
Example ~\ref{klee-minty})  violate  not only our nonrevisiting property, but also the Hasse
diagram property.  
In fact, they violate this  in a way that really seems to be at the heart  of why the simplex method for linear programming can be 
so inefficient on Klee-Minty cubes.  

   We  assume  throughout this paper  that $\bc $ is   
  a ``generic''  cost vector in  $\reals^d$, by which we mean  that $\bc\cdot u \ne \bc\cdot v$ for each pair $u,v$ of vertices of our polytope $P\subseteq \reals^d$ that are the two endpoints of an edge in $P$.   
  The resulting directed graph $G(P,\bc )$
    is easily seen to be 
    acyclic.  
  While others have previously studied combinatorial questions related to the simplex method  for linear programming and  to diameter bounds on polytopes (see e.g.  \cite{Sa}, \cite{Todd}, \cite{KM}, \cite{KW}, \cite{BDL}), 
we are not aware of any other work in which $G(P,\bc )$ is assumed to be a Hasse diagram.  This condition  is not only necessary to have the nonrevisiting  property,
 but  is  also a surprisingly  useful input for  many of our proofs. 

Our  focus  on simple polytopes throughout this paper is in some sense not such a severe restriction, since Klee and Walkup proved in \cite{KW} that the 
Hirsch Conjecture (reviewed in Section ~\ref{disc-morse-bg-section}) for simple polytopes would have implied the Hirsch Conjecture  for all polytopes.
Many  of our results might also hold  for non-simple  polytopes, with  obvious small modifications to the statements of the results.   Our proofs
do  heavily rely upon our assumptions  
that our polytopes $P$  are  
simple and 
that  the graphs 
$G(P,\bc )$ are Hasse diagrams.  

Now let us describe our main results, beginning with some terminology that this will require.  
 Given a  polytope $P$ and a generic cost vector $\bc $, each face $F$ of $P$ has  a unique  source  and a unique sink in the restriction of $G(P,\bc )$ to $F$.
Given 
a simple polytope $P$  such that $G(P,\bc )$ is the Hasse diagram of a poset $Q$, then for any  $u\in Q$ and any collection $a_1,\dots ,a_i$ of elements all covering
$u$ in $Q$, there is a unique 
$i$-dimensional face $F$ in $P$  containing $u$ along with  the outward edges from $u$ to the elements $a_1,\dots ,a_i$.  
We then define  the  {\bf pseudo-join}   of $a_1,a_2,\dots ,a_i$ to be   the unique sink of this face 
$F$.  
This definition is further explained and  justified in Section ~\ref{def-section}.
One might hope in the case that $Q$ is a lattice  that the pseudo-join of $a_1,\dots ,a_i$ 
 would  equal the join 
  (namely the unique least upper bound) 
of 
  $a_1,\dots ,a_i $. 

In Theorem ~\ref{pseudo=join}, we prove this equivalence  of the  join and the  pseudo-join operations:   
 
  \begin{thm}\label{pseudo-is-join}
If $P$ is a simple polytope and $\bc$ is a generic cost vector such that $G(P,\bc)$ is the Hasse diagram of a  lattice $L$, then the pseudo-join 
of any collection of atoms equals the join of this same collection of  atoms.   Moreover, this also holds for each interval $[u,v]$ in $L$. 
\end{thm}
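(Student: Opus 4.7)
The plan is to prove the equality of $j := a_1 \vee \cdots \vee a_i$ and $s$ (the pseudo-join, i.e., the sink of the $i$-face $F$ spanned at $u$ by the edges $ua_1, \ldots, ua_i$) by establishing both $j \le s$ and $s \le j$; the ``Moreover'' statement for intervals $[u,v]$ will then follow by the same argument applied within $[u,v]$, which is itself a lattice as an interval in $L$.

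The easy direction $j \le s$: since each atom $a_k$ is a vertex of $F$ and $s$ is the unique sink of $F$, there is a directed path within $F$ from $a_k$ to $s$, so $a_k \le s$ in $L$ for every $k$. Hence $s$ is a common upper bound of $\{a_1, \ldots, a_i\}$, and therefore $j \le s$ by the definition of join.

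For the reverse inequality $s \le j$, I plan to induct on $i = \dim F$. The base cases $i = 0, 1$ are immediate. For the inductive step with $i \ge 2$, the key geometric input is that $F$ is itself a simple $i$-polytope with $i$ atoms at its source $u$ and (dually) $i$ lower covers at its sink $s$. Applying the induction hypothesis to each $(i{-}1)$-subface $F_k$ of $F$ spanned at $u$ by $\{a_\ell : \ell \ne k\}$ yields $\mathrm{sink}(F_k) = \bigvee_{\ell \ne k} a_\ell$; lattice associativity then lets one rewrite $j$ as a join of these sub-sinks with the omitted atoms, reducing the problem to identifying such a join with $s$ geometrically.

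The heart of the argument is the base case $i = 2$, which I would tackle by contradiction. Assume $j < s$ strictly, and consider directed paths from $a_1$ and from $a_2$ to $j$ in $G(P,\bc)$. These two paths cannot first meet before $j$: any earlier common vertex $m$ would satisfy $a_1, a_2 \le m \le j$ in $L$, contradicting the minimality of $j$ as a common upper bound. Together with the edges $u \to a_1$ and $u \to a_2$, this produces two internally vertex-disjoint directed paths from $u$ to $j$ in the 1-skeleton of $P$. The main obstacle I anticipate is converting this configuration into a genuine contradiction. The natural strategy is to examine the incoming edges at $j$, which yield two (possibly distinct) lower covers spanning a 2-face $F'$ of $P$ at $j$ (with $j$ as its sink), and then compare $F'$ with $F$ using the simple-polytope correspondence between faces at a vertex and subsets of the edges at that vertex, together with the non-revisiting of 1-faces guaranteed by the Hasse diagram hypothesis, to force either an impossible edge-shortcut or a comparability relation between distinct atoms at $u$, yielding the required contradiction.
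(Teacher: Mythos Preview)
Your easy direction $j \le s$ and the overall inductive framework match the paper. The genuine gap is in the base case $i=2$, which is where essentially all the difficulty lies, and your sketch does not close it.

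The configuration you produce---two internally disjoint directed paths from $u$ to $j$ and hence a $2$-face $F'$ with sink $j$---does not by itself contradict anything. There is no a priori relationship between $F'$ and the original $2$-face $F$: they need not share any edges, the source of $F'$ need not be $u$, and the ``simple-polytope correspondence between faces at a vertex and subsets of edges'' only constrains faces through a \emph{common} vertex, which $F$ and $F'$ need not have except possibly $j$ or points on the boundary of $F$. Nothing in the Hasse-diagram hypothesis (nonrevisiting of $1$-faces) rules out this picture locally. So the step ``compare $F'$ with $F$ \ldots\ to force either an impossible edge-shortcut or a comparability relation between distinct atoms at $u$'' is not an argument; it is exactly the hard part, and as stated it does not go through.

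The paper's proof of the $i=2$ case (its Theorem~\ref{2-atom-base-case}) uses an idea you are missing entirely: a \emph{global extremal choice}. One simultaneously considers all $2$-faces where either $psj \ne \vee$ at the source or $psm \ne \wedge$ at the sink, and picks one whose source is as far as possible from $\hat 0$ (respectively whose sink is as far as possible from $\hat 1$). The payoff of this choice is that any $2$-face whose source lies strictly above $u$ already satisfies $psj = \vee$. One then analyzes a directed path that exits $F$ at some boundary vertex $y_i$ and re-enters at $y_k$, choosing $k-i$ minimal; the extremality lets one invoke $psj=\vee$ for the $2$-face spanned at $y_i$ by the exiting edge and the boundary edge $y_i \to y_{i+1}$, and a counting argument on shared vertices of distinct $2$-faces then produces a re-entry with strictly smaller gap $k'-i' < k-i$, a contradiction. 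Without this extremal setup you have no leverage to say anything about $2$-faces encountered along your paths to $j$.

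Your inductive step is also incomplete: knowing $\mathrm{sink}(F_k) = \bigvee_{\ell\ne k} a_\ell$ for each $k$ does not by itself express $s=\mathrm{sink}(F)$ as a join of things already known to be $\le j$. The paper instead proves that \emph{every} vertex of $F$ is $\le j$ by an upward sweep through $F$, using the inductive hypothesis on smaller faces at each step; in particular this gives $s \le j$. Finally, your justification of the ``Moreover'' clause is not quite right: the closed interval $[u,v]$ in $L$ is generally \emph{not} the vertex set of a face of $P$, so you cannot simply ``apply the same argument within $[u,v]$.'' In the paper the statement and proof are formulated from the outset for an arbitrary $u$ and elements covering $u$, which is what actually yields the interval version.
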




Theorem \ref{pseudo-is-join} combined with the Quillen Fiber Lemma also leads to the following topological property for lattices whose Hasse diagrams arise as 1-skeleta of simple polytopes:

\begin{thm}\label{thm-2}
If $P$ is a simple polytope and $\bc $ is a generic cost vector such that $G(P,\bc )$ is the Hasse 
diagram of a lattice $L$, then   each open interval  $(u,v)$ 
in $L$ has order complex which is homotopy equivalent to a ball or a sphere of some dimension.  Therefore, the M\"obius function $\mu_L(u,v)$ only takes values $0,1,$ and $-1$.
\end{thm}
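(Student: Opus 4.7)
The plan is to reduce the homotopy-type question to a combinatorial statement about the atom nerve of $[u,v]$, using Bj\"orner's Crosscut Theorem together with Theorem~\ref{pseudo-is-join} to identify abstract joins geometrically. Fix an open interval $(u,v)$ of $L$ and apply the Crosscut Theorem to the lattice $[u,v]$, with crosscut $A=\{a_1,\dots,a_k\}$ the set of atoms of $[u,v]$ that are strictly below $v$; if $A$ is empty then $u\lessdot v$ and $\Delta(u,v)=\emptyset$ which we regard as the $(-1)$-sphere, so assume $k\geq 1$. The theorem gives a homotopy equivalence $\Delta(u,v)\simeq \Gamma$, where $\Gamma$ is the simplicial complex on vertex set $A$ whose faces are the subsets $S\subseteq A$ with $\bigvee S<_L v$. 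Since $P$ is simple, every subset of the $d$ edges at $u$ spans a unique face of $P$ with $u$ as source. In particular the edges corresponding to $a_1,\dots,a_k$ span a $k$-face $F_A$ with source $u$, and every $S\subseteq A$ yields a subface $F_S\subseteq F_A$ of dimension $|S|$ also with source $u$. Theorem~\ref{pseudo-is-join}, applied to the interval $[u,v]$, identifies $\bigvee S$ with the pseudo-join of $S$, which by definition is $\operatorname{sink}(F_S)$.

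Set $w:=\bigvee A=\operatorname{sink}(F_A)\leq_L v$. If $w<_L v$, then $A$ itself satisfies $\bigvee A<_L v$, and by monotonicity of joins so does every $S\subseteq A$; hence $\Gamma$ is the full $(k-1)$-simplex, which is a ball. If $w=v$, then for every codimension-one subface $F_T\subsetneq F_A$ (both having source $u$), Lemma~\ref{not-same-source-sink} forces $\operatorname{sink}(F_T)\neq v$, and combining this with the join--pseudo-join identification gives $\bigvee T<_L v$. For an arbitrary $S\subsetneq A$, I embed $S$ into such a codimension-one $T\subsetneq A$, and monotonicity of joins then yields $\bigvee S\leq_L \bigvee T<_L v$. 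Thus the faces of $\Gamma$ are exactly the proper subsets of $A$, so $\Gamma=\partial\Delta^{k-1}$, a $(k-2)$-sphere.

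Either way $\Delta(u,v)$ is homotopy equivalent to a ball or a sphere of some dimension, and the M\"obius function claim then follows from P.~Hall's formula $\mu_L(u,v)=\tilde\chi(\Delta(u,v))$, since the reduced Euler characteristic is $0$ for a ball and $\pm 1$ for a sphere of any dimension (including the $(-1)$-sphere). The main delicate step is the case $w=v$: the codimension-one non-coincidence of sinks provided by Lemma~\ref{not-same-source-sink}, together with the join-monotonicity inherited from Theorem~\ref{pseudo-is-join}, is precisely what forces the atom nerve to close up into a sphere rather than collapse further. Simplicity of $P$ is essential throughout, since it is what ensures that every subset of the $d$ edges at $u$ actually spans a face of $P$, making the geometric interpretation of joins as sinks of faces $F_S$ well-defined for every $S\subseteq A$.
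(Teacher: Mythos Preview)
Your proof is correct and reaches the same conclusion via a genuinely different (though closely related) route.  The paper argues by constructing the dual closure map $f$ on $(u,v)$ sending each $z$ to the join of the atoms below it, identifies the image with a Boolean lattice (minus $\hat 0$, and possibly minus $\hat 1$) using Theorem~\ref{pseudo-is-join} together with Lemma~\ref{new-pseudo-joins}, and then invokes the Quillen Fiber Lemma.  You instead apply the Crosscut Theorem to the atoms of $[u,v]$ directly, using Theorem~\ref{pseudo-is-join} to read off $\bigvee S$ geometrically as $\mathrm{sink}(F_S)$, and handling the sphere case by a direct appeal to Lemma~\ref{not-same-source-sink} to force $\mathrm{sink}(F_T)\ne v$ for codimension-one $T$.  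This bypasses the separate packaging of that argument as Lemma~\ref{new-pseudo-joins} and Corollary~\ref{implies-Boolean}.  The two arguments are essentially dual reformulations of one another---indeed, one standard proof of the Crosscut Theorem proceeds exactly through such a closure map and the Quillen Fiber Lemma---but your version is somewhat more streamlined and makes the ball/sphere dichotomy (full simplex versus boundary of simplex) completely transparent.  One small point: the Crosscut Theorem is not stated in the paper, so in this context you would need either to cite it externally or unwind it via the closure-map argument, at which point you recover something very close to the paper's proof.
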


This is proven  as Theorem \ref{simple-polytopes}.  
Applications are given  
to the
  weak Bruhat order,  the Tamari lattice, and to the $c$-Cambrian lattices in Theorems \ref{weak-theorem}, \ref{Tamari-theorem} and \ref{Cambrian-theorem}, respectively.
Posets 
meeting the hypotheses of  Theorem ~\ref{thm-2}  need  
not  be shellable, as  explained  in Remark ~\ref{almost-never-shellable}, so other methods are indeed needed to understand their topological structure.

\begin{rk}\label{which-Hasse}
 People sometimes ask whether the Hasse diagrams for  a given family of posets are realizable  as 1-skeleta of polytopes (or more specifically of simple polytopes).
 Well-known examples having  this property  include the weak order, the Tamari lattice, and more recently the Cambrian lattices (arising from the theory of cluster algebras). 
Theorem ~\ref{thm-2}   gives a  
  necessary condition  for a Hasse diagram of a lattice to be the 1-skeleton of a simple polytope.
\end{rk}

    In light of these 
    and  other  results  
    proven in this paper,  we   
  make the following conjecture:

  \begin{conj}\label{main-conj}
  Given a simple polytope $P$ and  a generic cost vector $\bc $ such that $G(P, \bc )$ is the Hasse diagram of a lattice,  the  directed paths in $G(P,\bc )$ cannot revisit any face.  
  That is, any directed path $v_1\rightarrow v_2\rightarrow \cdots \rightarrow v_k$ with $v_1$ and $v_k$    in a face $F$ 
 has  $v_i \in F$ for $1\le i \le k$. 
  \end{conj}
  
%
Conjecture \ref{main-conj} would directly imply Theorem \ref{pseudo-is-join} above. 
  We  do prove  part of  Conjecture \ref{main-conj} (see Corollary ~\ref{source-face-sink-face}), namely we prove the desired face nonrevisiting property for those faces
  $F$  of $P$ containing  either the source or the sink of $P$. 
  We use this  together with other structural results in our paper to deduce   Conjecture ~\ref{main-conj}  in  the case of 3-dimensional polytopes, carrying this out  
  in  Theorem ~\ref{case-of-3-polytopes}.   
  We also   prove  Conjecture ~\ref{main-conj}  
  for spindles (see Definition ~\ref{spindle-def})  whose two distinguished vertices are the source and sink of the polytope, doing this in Theorem \ref{santos-okay}.  
    In other words,  we have shown that  
    the established  method that has led to  
    counterexamples to the Hirsch Conjecture (see ~\cite{Sa})  cannot   yield counterexamples to  Conjecture ~\ref{main-conj}.
    Given  any simple polytope $P$ and any generic cost vector $\bc $ such that $G(P,\bc )$ is the Hasse diagram of a lattice, Conjecture \ref{main-conj} would imply that no directed path in $G(P,\bc )$ could have length more than $n-d$; that is, it  would  imply the  Monotone Hirsch Conjecture (cf. Conjecture \ref{monotone-Hirsch}) for such $P$ and  $\bc $.  
  Dominik Preu\ss \hspace{.02in}  recently informed us  that  
  he has proven  the Monotone Hirsch Conjecture 
    for $P$ a simple polytope with cost vector $\bc $ such that $G(P,\bc )$ is the Hasse diagram of a lattice, motivated to do so by our work (private communication from Dominik Preu\ss).     Preu\ss \hspace{.02in}  has  provided his  proof  
   as Appendix A to our paper.  

In Section  ~\ref{disc-morse-bg-section}, we review background. 
 Section 
~\ref{def-section} introduces and develops  seemingly  new  (or at least not widely known) notions 
 to be used later.      Section ~\ref{polytopes-section} 
gives the proofs of our main technical results.  This includes  Theorems ~\ref{pseudo-is-join}  
and ~\ref{thm-2}  mentioned above as well as a number of
consequences  and  related results.  Section ~\ref{nonrevis-subsection} 
gives applications to two  important classes of polytopes, namely the  3-polytopes and the  spindles.  Section ~\ref{example-section}
shows how other  well-known families of  polytopes also   fit into our  framework. 
Section ~\ref{zonotope-section} turns to the case of zonotopes, where
especially clean results are possible.   
Section ~\ref{posets-from-shellings} extends  our results 
to more general acyclic orientations derived  from shellings. 
Further questions and remarks appear  in Section ~\ref{further-section}.  The paper concludes with the aforementioned  appendix by Dominik Preu\ss.


The author is indebted to Karola M\'esz\'aros for several  invaluable conversations at 
early stages of this project.   
She also  thanks 
%
Georgia Benkart, Stephanie van Willigenberg,  Monica Vazirani, and  the Banff International Research Station (BIRS) for conducting an inspiring  workshop entitled
Algebraic Combinatorixx  
 for female researchers in algebraic combinatorics.
In some sense, this project  grew out of discussions that began at that workshop. 
   
 The author also thanks  Louis Billera, Heather Dye, Gil Kalai, 
 Nathan Reading,  Victor Reiner, Francisco Santos, Bridget Tenner, and G\"unter Ziegler 
 for extremely helpful discussions, references, and in some cases counterexamples to questions she raised while working on this project.   She thanks the anonymous referees for lots of highly valuable  feedback that made the paper much better.   
 Finally, the author wishes to express her gratitude for the excellent mentoring of  Victor Klee  and the combinatorics group at the University of Washington during her term from 1999 to 2001 in a postdoctoral position there and for introducing her  to the interesting topic  of the Hirsch Conjecture back then.

\section{Background} 
\label{disc-morse-bg-section}

  A {\bf cover relation}  $u\prec v $ 
   in a finite partially ordered set (poset)  $Q$ is  $u\le v$ in $Q$  with the  requirement that $u\le z \le v$ implies either $z=u$ or $z=v$.   The {\bf Hasse diagram} of a finite poset $Q$   is the directed graph with directed edges $u\rightarrow v$  if and only if $u \prec v$ in $Q$.    If a poset has a unique minimal element, denote this element by $\hat{0}$.  If  a poset has a unique maximal element, denote this by $\hat{1}$.   An {\bf atom} in a poset $Q$ with $\hat{0}$  is any $a\in Q$ satisfying  $\hat{0} \prec a$.  Likewise  a {\bf coatom} in a poset $Q$ with $\hat{1}$ is any  element $c$ satisfying  $c\prec \hat{1} $. 
  
  If $x,y \in Q$ have a unique least upper bound, this is called the {\bf join} of $x$ and $y$, denoted $x\vee y$.  If $x,y \in Q$ have a unique
  greatest lower bound, this is the {\bf meet} of $x$ and $y$, denoted $x\wedge y$.
  A poset $Q$  is a {\bf lattice} if each  pair of elements $x,y\in L$  have a meet and a join.
  Any partially ordered set $Q$ gives rise to a {\bf dual} partial order on the same set of elements, denoted $Q^*$,  with $u\le v$ in $Q^*$ if and only if $v\le u $ in $Q$.

\begin{rk}
  A key example throughout  this paper of poset duality will be as follows: whenever $G(P, \bc )$ is the Hasse diagram of a poset $L$, then the dual poset  $L^*$ will have as its Hasse diagram the directed graph $ G(P, -\bc)$.
\end{rk}
  
 Denote by $(u,v)$ the subposet of $Q$ comprised of those $z\in Q$ satisfying $u < z < v$.  This is known as the {\bf open interval} from $u$ to $v$.  Likewise, we define the {\bf closed interval } from $u$ to $v$, denoted $[u,v]$, to be the suposet of elements $z\in Q$ satisfying $u\le z\le v$.  Define the 
 M\"obius function of $Q$, denoted $\mu_Q$,  recursively by setting $\mu_Q(u,u) = 1$ for each $u\in Q$ and 
 $$\mu_Q(u,v) = - \sum_{u\le z < v} \mu_Q(u,z).$$
 
The {\bf order complex} of a finite poset $Q$, denoted $\Delta (Q)$,   is the simplicial complex
whose $i$-faces
are  the chains 
$ v_0 < \cdots < v_i  $
of $i+1$ comparable
elements of $Q$.  We let $\Delta (u,v)$ (or $\Delta_Q (u,v)$) denote the order complex of the open interval $(u,v)$ in $Q$.  By definition, a  poset and its dual poset have the same order complex.  It is well-known (by Hall's Theorem) that
$\mu_Q(u,v) = \tilde{\chi }(\Delta (u,v)) $ 
where $\tilde {\chi } $ is the {\bf reduced Euler characteristic} of 
$\Delta (u,v)$, namely $$\tilde{\chi }(\Delta (u,v) ) = -1 + f_0(\Delta (u,v)) - f_1 (\Delta (u,v )) + f_2 (\Delta (u,v))  - \cdots $$ for $f_i(\Delta )$ the number of $i$-dimensional faces in $\Delta $.
Sometimes we will speak of the homotopy type of a poset or of a poset interval, by which we  mean the homotopy type of the order complex of that poset or that poset interval.  
See e.g. \cite{st} 
for further background on posets.  

A {\bf polytope} is any subset of $\reals^d$  arising as the 
 convex hull of a finite set of vertices in $\reals^d$ for some $d$; equivalently, a polytope
 is any bounded set given by a system of non-strict linear inequalities, or in other words any bounded
 set expressible as $\{ {\bf x} \in \reals^d | A {\bf x} \le {\bf b} \} $ for some choice of   constant 
  $n\times d  $ real matrix $A $ and  some choice of 
 constant vector ${\bf b}\in \reals^n$.
 We call a polytope a {\bf d-polytope} if there is a $d$-dimensional affine space containing the polytope but there is  not a  $(d-1)$-dimensional affine space containing this same polytope.      
 An excellent reference to learn about polytopes in more detail than we will give here is \cite{zi}.
 
 Any hyperplane $H$  that intersects a polytope $P$  nontrivially but has all points of $P$  either contained  in $H$  or on one side of $H$ is called a {\bf bounding hyperplane} of $P$.  
 The intersection of a bounding hyperplane with a polytope is called a {\bf face} of the polytope.  
 A  maximal face in the boundary of a polytope is called a {\bf facet}.

A  polytope is {\bf simplicial}  if each face in its boundary is a simplex.  
A polytope $P$  is  {\bf simple}
if for each vertex $v\in P$ and each collection of $i$ edges emanating outward from $v$, there is an 
 $i$-dimensional face of $P$ containing $v$ and all these edges incident to $v$.
Equivalently,  a  polytope $P$ is simple if 
its dual polytope (briefly discussed next and defined more precisely e.g. in \cite{zi})  
is a simplicial polytope.

The {\bf face poset}, denoted $F(P)$,   of a polytope $P$  is the partial
order on faces  with $\sigma < \tau $ if and only if  $\sigma $ is in the boundary of 
$ \tau $.   
Each polytope
$P$  has a {\bf dual polytope}, denoted $P^*$ with face poset satisfying  $F(P^*) = (F(P))^*$. 

\begin{rk}\label{caution-remark}
 As a word of caution, note that in cases where  $G(P,\bc )$ is the  Hasse diagram of a poset $Q$, the face poset $F(P)$ of the polytope $P$  is typically  a completely different  poset than  $Q$.  In this case negating the cost vector $\bc $ while preserving the polytope $P$  yields a   directed graph  $G(P,-\bc)$ on the 1-skeleton 
 of $P$  which    is the Hasse diagram of  $Q^*$. 
 \end{rk}

 \begin{defn}\label{spindle-def}
  A {\bf spindle} is a polytope  $P$ with a distinguished pair of vertices $u$ and $v$ such that each facet  of $P$  includes either $u$ or $v$.  The dual polytope to a spindle is called a {\bf prismatoid}, and it is characterized by the property that it has two distinguished facets such that  every vertex belongs to one or the other of these two facets.
 \end{defn}

 A {\bf zonotope} is a polytope arising as a linear projection of a cube of some dimension.  In other words,  
 a zonotope is a Minkowski sum of line segments.   


 In linear programming, a  {\bf  pivot rule} for the simplex method   is a rule for  choosing for each   vertex $v$ of $G(P,\bc )$ which outward (resp. inward)  oriented edge from $v$ to traverse in choosing a  
directed path  to the sink (resp. source).   


Now we recall the Hirsch Conjecture,  the Nonrevisiting Path 
Conjecture, 
and the Monotone Hirsch Conjecture.  We refer readers e.g.  to \cite{zi} for a more
in-depth discussion of all of these conjectures.

\begin{conj}[Hirsch Conjecture]\label{Hirsch-conj}
For $n>d\ge 2$, let $\Delta (d,n)$ denote the largest possible diameter of the graph of a $d$-polytope with $n$ facets.  Then $\Delta (d,n) \le n-d$.
\end{conj}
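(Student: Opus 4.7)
The plan is to reduce to the framework developed earlier in this paper. By the Klee--Walkup theorem (cited in the introduction), it suffices to prove $\Delta(d,n) \le n-d$ for simple $d$-polytopes, so I assume $P$ is simple. Given vertices $u, v \in P$ realizing the diameter, the approach is to produce a generic cost vector $\bc \in \reals^d$ such that $u$ is the unique source of $G(P,\bc)$, $v$ is its unique sink, and $G(P,\bc)$ is the Hasse diagram of a lattice $L$. Granting Conjecture~\ref{main-conj}, every directed path from $u$ to $v$ in $G(P,\bc)$ would then be face-nonrevisiting.

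The second step is to extract the length bound from face-nonrevisiting. If a directed path $u = v_0 \to v_1 \to \cdots \to v_k = v$ never revisits any face after leaving it, then the collection of facets containing the running vertex $v_i$ evolves monotonically: once a facet has been exited it can never be re-entered. Since $P$ is simple, $u$ lies in exactly the $d$ facets containing it and $v$ lies in exactly the $d$ facets containing it, and a standard accounting argument (counting the at most $n-d$ distinct facet-entries along the path) then bounds $k$ by $n-d$. Since $d(u,v) \le k$ and $u,v$ were chosen to realize $\Delta(d,n)$, this yields the desired inequality.

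The whole scheme hinges on step one, which is where I expect the main obstacle to lie: exhibiting a cost vector $\bc$ for which $G(P,\bc)$ is the Hasse diagram of a \emph{lattice} with prescribed source and sink. There is no general mechanism for this — the lattice property is a strong global constraint that local perturbations of $\bc$ do not control, and there is no reason a priori that every diameter-realizing pair $(u,v)$ should admit even a single $\bc$ with these properties. A natural attempt would start from the functional $\bc_0(x) = \langle v-u,\, x\rangle$ and perturb it generically, but nothing in such a perturbation forces the joins and meets of atoms in $G(P,\bc)$ to exist uniquely at every rank. Worse, Santos's construction (referenced in the excerpt) produces simple polytopes violating $\Delta(d,n) \le n-d$ outright, so no valid $\bc$ of the required form can exist on those polytopes. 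This program therefore cannot establish Conjecture~\ref{Hirsch-conj} unconditionally — and indeed the conjecture as stated is now known to be false. What the paper's framework does deliver is the conditional bound $d(u,v) \le n-d$ for $(P,\bc)$ meeting the hypotheses of Conjecture~\ref{main-conj}, which is carried out unconditionally for $3$-polytopes in Theorem~\ref{case-of-3-polytopes} and for spindles with source and sink at the distinguished vertices in Theorem~\ref{santos-okay}; the unconditional Hirsch bound itself remains out of reach of this method.
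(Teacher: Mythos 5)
Your assessment is correct, and it matches the paper's own stance: the statement labeled Conjecture~\ref{Hirsch-conj} is recalled purely as background, the paper offers no proof of it, and in the very same section it records Santos's theorem from \cite{Sa} that the Hirsch Conjecture (and hence the Nonrevisiting Path Conjecture) is false, so no proof is possible. Your account of the two ingredients is also accurate — the Klee--Walkup reduction \cite{KW} to simple polytopes, and the standard counting argument that a face-nonrevisiting path has at most $n-d$ edges (each step departs a facet never to be revisited, while the terminal vertex still lies on $d$ facets), which is exactly the argument the paper uses to explain why nonrevisiting implies the Hirsch bound — and you correctly identify that the framework of Conjecture~\ref{main-conj} only yields conditional diameter bounds (unconditional only in the special cases of Theorems~\ref{case-of-3-polytopes} and~\ref{santos-okay}), since there is no mechanism to produce a cost vector $\bc$ making $G(P,\bc)$ the Hasse diagram of a lattice with a prescribed source and sink, and Santos's counterexamples rule this out in general.
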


\begin{conj}[Nonrevisiting Path Conjecture]\label{nonrevisit-conj}
For any two vertices $u,v$ of a $d$-dimensional  polytope, there is a path from $u$ to $v$ which does not revisit any facet it has left before.
\end{conj}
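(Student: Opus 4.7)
The plan is to attempt a double induction on the pair $(d,n)$, ordered lexicographically on the dimension $d$ of the polytope and the number of facets $n$. The base cases $d \le 2$ are trivial, and $d=3$ can be handled by planar graph arguments, paralleling Theorem~\ref{case-of-3-polytopes}. By a standard simple-polytope reduction in the spirit of Klee--Walkup, one may further assume that $P$ is a simple $d$-polytope.

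For the inductive step, fix vertices $u$ and $v$ and choose a generic cost vector $\bc$ with $u$ as the source and $v$ as the sink of $P$, so that any monotone path from $u$ to $v$ is forced to be acyclic and in particular cannot revisit any $1$-face. One would then try to locate a facet $F$ with $u \in F$ but $v \notin F$, apply the induction hypothesis inside $F$ to produce a nonrevisiting path from $u$ to a well-chosen ``exit vertex'' $u' \in F$, and then peel off $F$ (reducing $n$ by one) and invoke induction again to produce a nonrevisiting path from $u'$ to $v$ that never returns to $F$. The subtle choice is that of $u'$: it must be picked so that every edge taken after leaving $F$ strictly moves away from $F$ in the $\bc$-direction, ensuring both that the concatenated path remains monotone and that $F$ is not re-entered.

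The main obstacle --- and, as the remark about \cite{Sa} in the introduction indicates, ultimately a fatal one for the conjecture in this unrestricted form --- is that no choice of facet $F$ and exit vertex $u'$ with these properties is guaranteed to exist. When the two half-paths are concatenated, there is no a~priori local reason preventing the second half from returning to some other facet $F'$ abandoned earlier, since the global face-incidence pattern can compel revisits that are invisible in the cover relations on which the induction would operate. This is precisely the difficulty that motivates strengthening the hypothesis to that of Conjecture~\ref{main-conj}, requiring $G(P,\bc)$ to be the Hasse diagram of a lattice; in that restricted setting Theorem~\ref{pseudo-is-join} identifies the geometric pseudo-join of a set of atoms inside an interval with the lattice-theoretic join, giving the structural tool needed to propagate nonrevisiting upward in face-dimension, and so it is the modified conjecture rather than the unrestricted one for which the inductive strategy above has a realistic chance of carrying through.
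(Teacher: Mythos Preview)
The statement you are attempting to prove is a \emph{conjecture}, not a theorem, and the paper contains no proof of it because none exists: immediately after stating Conjecture~\ref{nonrevisit-conj}, the paper recalls Santos' theorem that the Hirsch Conjecture is false and hence that the Nonrevisiting Path Conjecture is also false. So there is no ``paper's own proof'' against which to compare your proposal, and any purported proof of the statement as written must contain a genuine error.

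To your credit, you essentially recognize this in your final paragraph, where you call the obstacle ``fatal'' and pivot to Conjecture~\ref{main-conj}. But then the document you have written is not a proof proposal at all; it is a sketch of a strategy together with an explanation of why that strategy cannot succeed. If that was your intent, it would be clearer to frame it that way from the outset rather than as a ``proof.'' If instead you meant the inductive argument to go through, the gap is exactly the one you name: after peeling off the facet $F$ and invoking the inductive hypothesis on the smaller polytope, nothing prevents the second half of the path from revisiting a facet $F'\ne F$ that was already left during the first half inside $F$. The two inductive calls are made on different polytopes with different facet sets, and the nonrevisiting guarantees they separately provide do not glue. Santos' counterexamples show this is not a repairable technicality but a genuine obstruction, which is precisely why the paper restricts attention to the lattice hypothesis of Conjecture~\ref{main-conj} rather than attempting the unrestricted statement.
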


The  Nonrevisiting Path  Conjecture, proposed by Klee and Wolfe,
  implies the Hirsch Conjecture.
To see this implication, notice  that any directed path from $u$ to $v$ of the type  given by the Nonrevisiting Path Conjecture would involve at most $n-d$ edges, since each edge would depart a facet, with no facet departed more than once, and since the ending vertex  $v$ for the path would still  belong to $d$ facets; thus, each pair of vertices $u,v$ would have a path of length at most $n-d$ between them.  
Counterexamples to the Hirsch Conjecture (and thereby also  to  the Nonrevisiting Path Conjecture)  were first obtained by Francisco Santos  in \cite{Sa}:

\begin{thm}[Santos]
The Hirsch Conjecture is false.  Therefore, the Nonrevisiting Path Conjecture is also false.
\end{thm}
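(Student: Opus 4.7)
The plan is to follow Santos' construction, which proceeds in three stages. First, I reduce the Hirsch Conjecture to a question about spindles: it suffices to produce a $d$-dimensional spindle (in the sense of Definition ~\ref{spindle-def}) with $n > 2d$ facets whose two distinguished vertices are at graph-distance strictly greater than $d$. This reduction, a spindle-theoretic strengthening of the classical Klee--Walkup $d$-step theorem, works by repeatedly applying a ``one-point suspension'' (or wedge) operation along each surplus facet: each such operation raises the dimension by one and the facet count by one while preserving or increasing the distance between the two distinguished vertices. After $n-2d$ wedges one obtains an $(n-d)$-polytope with $2(n-d)$ facets whose diameter is at least $\ell + (n-2d)$, where $\ell > d$ is the original length; since $\ell+(n-2d) > n-d$, Conjecture ~\ref{Hirsch-conj} fails.

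Second, and this is the heart of the argument, I construct a concrete $5$-dimensional spindle of length $\ell \ge 6$. It is cleaner to dualize and build the corresponding \emph{prismatoid} $Q$, a polytope all of whose vertices lie on two parallel hyperplanes $H_+$ and $H_-$ (see Definition ~\ref{spindle-def}). One shows that the distance between the two apex vertices of the dual spindle equals the minimum number of steps required, along the ``normal-fan walk'' of $Q$, to move from a vertex of $H_+$ to a vertex of $H_-$ passing only through intermediate vertices whose normal cones meet both hyperplanes. I would then choose two point configurations, each of order a few dozen points, placed on $H_+$ and $H_-$ with several pairwise transversal planar sub-patterns, so that no single normal-fan step can simultaneously resolve all the obstructions: any admissible path must traverse at least six consecutive ``crossing'' vertices. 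The verification is a finite but intricate combinatorial check on the face structure of $Q$.

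Third, chaining the two constructions yields a polytope in dimension roughly $43$ with about $86$ facets whose diameter exceeds $n-d$, contradicting the Hirsch Conjecture. Since the Nonrevisiting Path Conjecture (Conjecture ~\ref{nonrevisit-conj}) implies the Hirsch Conjecture (as noted in the paragraph immediately following its statement), the failure of Hirsch gives the failure of Nonrevisiting by contraposition.

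The principal obstacle is the explicit prismatoid construction of step two. Lower-bounding the width of a prismatoid requires ruling out every short path in an intricate normal-fan graph whose size grows rapidly with the number of vertices, and the configuration must be simultaneously rigid enough to force width strictly greater than the dimension, yet in sufficiently general position that the dual spindle remains simple and the combinatorics is controlled. By contrast, the reductions in steps one and three are largely formal once the geometric object exists, so essentially all the content of the theorem is concentrated in producing a single prismatoid whose width exceeds its dimension.
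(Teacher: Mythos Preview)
Your sketch is a reasonable high-level outline of Santos' actual argument in \cite{Sa}: the Strong $d$-step reduction via wedges on spindles, the construction of a $5$-dimensional prismatoid of width exceeding $5$, and the resulting $43$-dimensional counterexample. However, the paper itself does not prove this theorem at all. It is stated in the background section purely as a citation of Santos' result, with the attribution ``[Santos]'' and the reference \cite{Sa}; the surrounding text says only that ``Counterexamples to the Hirsch Conjecture (and thereby also to the Nonrevisiting Path Conjecture) were first obtained by Francisco Santos in \cite{Sa}.'' There is nothing to compare your argument against, because the paper treats this as an external fact rather than something to be established.

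So while your plan is broadly faithful to what Santos did, it goes well beyond what this paper attempts. If your goal is to match the paper, the appropriate ``proof'' here is simply to cite \cite{Sa} and, for the second sentence, invoke the implication (recalled just before the theorem) that the Nonrevisiting Path Conjecture implies the Hirsch Conjecture.
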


One may still ask 
for sufficient conditions on a polytope for 
these conjectures  to  hold  and for our even stronger  face nonrevisiting property to hold.   
%
%
This latter question is   related  to the 
Monotone Hirsch Conjecture for polytopes:



\begin{conj}[Monotone Hirsch Conjecture]\label{monotone-Hirsch}
Let 
$H(d,n)$ 
be the smallest integer  $N$ such that for every  $d$-polytope 
$P$ in $\reals^d$, every  cost vector $\bc \in \reals^d$ in general position with respect to $P$, and 
every  vertex $v\in P$,  there exists a strict monotone path from $v$ to the sink of $P$ using at most $N$ steps.  
Then 
$H(d,n)\le n-d$.
\end{conj}

To see the connection, notice that  any 
polytope $P$ and cost vector $\bc $ such that 
no directed path in $G(P,\bc )$ revisits any facet 
must have  $G(P,\bc )$ of  diameter at most $n-d$, by the same reasoning  
  which showed  
that the Nonrevisiting Path Conjecture  implies the 
Monotone Hirsch Conjecture for polytopes (see the discussion just after Conjecture ~\ref{nonrevisit-conj} above).

Todd gave the first  counterexample  to the Monotone Hirsch Conjecture in \cite{Todd}; this came from  a 4-polytope with 8 facets and a vertex $v$ requiring at least  5 steps for all  monotone paths from $v$ to the sink.  However, this polytope has an edge from $v$ to the source and then a directed edge from source to sink.  From the standpoint of our own conjecture, it is perhaps worth noting that the existence of this directed  edge prevents  the directed graph in this case from being a Hasse diagram.   


We conclude this section with some further background on  poset topology.
A map $f:P\rightarrow Q$ from  a poset $P$ to  a poset $Q$ is a {\bf poset map} if $u\le v$ in $P$ implies $f(u)\le f(v)$ in $Q$.  

\begin{thm}[Quillen Fiber Lemma, \cite{Qu}]\label{Quillen-Lemma}
Let $f:P\rightarrow Q$ be  a poset map such that for each $q\in Q$ the order complex  $\Delta (f_{\ge q}^{-1})$ for
$f_{\ge q}^{-1} = \{ p\in P| f(p)\ge q \} $ is contractible.  Then $\Delta (P) \simeq \Delta (Q) $.
\end{thm}

 A  {\bf dual closure map} is a poset map $f: P \rightarrow P$ with $f(u) \le u$  and  $f^2 (u ) = f(u)$ for all $u\in P$.  Notice that any such $f$ meets the contractibility requirement of the Quillen Fiber Lemma,  by virtue of  each $u\in im(f)$ being a  cone point in  $\Delta (f_{\ge  u}^{-1})$. Thus, $\Delta (im(f)) \simeq \Delta (P)$ in this case.  
 
\begin{rk}\label{our-dual-closure-map}
The poset  map $f$ sending each element $u$ in a finite  lattice to the join of those atoms $a$ satisfying $a\le u$ is a dual closure map with  the further   property that $f^{-1}(\hat{0}) = \{ \hat{0} \} $.  
Thus, the Quillen Fiber Lemma yields $\Delta (P\setminus \{ \hat{0} \} ) \simeq \Delta (im(f) \setminus \{  \hat{0} \} )$ in this case.
\end{rk}

\section{The nonrevisiting property, the Hasse diagram property, pseudo-joins and pseudo-meets}\label{def-section}

In this section, we introduce or in some cases make more precise a few seemingly new 
notions 
which will  play key roles  
 throughout this paper.   


\begin{defn}\label{non-revisiting-def}
A directed graph $G(P,\bc)$ on the 1-skeleton of a polytope $P$ satisfies the {\bf nonrevisiting property} if for each facet $F$ and each directed path $p_F$  that starts and ends at vertices  
in $F$, the path $p_F$ must stay entirely within $F$.   
We say that $G(P,\bc )$ satisfies the
nonrevisiting property for $i$-dimensional faces if for each $i$-face $F$ and each directed path 
$p_F$ that starts and ends at vertices of $F$, every vertex of $p_F$ is in $F$.  
\end{defn}

\begin{rk}\label{facets-implies-faces}
This nonrevisiting property for facets  is equivalent to 
 the nonrevisiting property for
all  faces of every  dimension  since each  face is an intersection of facets which means that  to depart a face and revisit that face 
would require departing at least one facet containing the face  and then revisiting this same facet.   
\end{rk}




The nonrevisiting property for 1-faces is equivalent to the Hasse diagram property:

\begin{lem}\label{1-d-revisiting}
The nonrevisiting property holds  for 1-dimensional faces of a polytope $P$ with respect to cost vector $\bc $ if and only if 
$G(P,\bc)$  is  the  Hasse diagram  of a poset.  
\end{lem}

\begin{proof} 
First we show how the nonrevisiting property for 1-faces implies $G(P,\bc )$ is a Hasse diagram.
Acyclicity of $G(P,\bc )$ ensures  that the directed paths indeed specify comparabilities in a partially 
ordered set.  
The  nonrevisiting property for 1-dimensional faces  guarantees  that  a directed path cannot visit the sink of a directed  edge after departing from  the source of that directed edge in a manner other than traversing that edge. 
 This shows that each directed edge gives rise to a cover relation in the poset rather than simply an order relation.

The other direction follows directly from  the  definition of  a cover relation in a poset.
\end{proof}

\begin{ex}\label{klee-minty}
An especially instructive and significant family of 
simple polytopes failing the Hasse diagram property are the 
Klee-Minty cubes (cf. \cite{KM}). 
%
%
Klee-Minty cubes are polytopes  $P$  that are realizations of $d$-dimensional hypercubes admitting  a  cost vector $\bc $ 
with  the property  that a  directed path exists  in  $G(P,\bc )$ that  visits all $2^d$ vertices of $P$.    These were historically  the first examples of polytopes  demonstrating that 
the simplex method for linear programming is not a polynomial time algorithm. 

The $d$-dimensional Klee-Minty cube may be realized for any fixed $\epsilon $ satisfying $0 < \epsilon < 1/2 $ as $\{ (x_1,\dots ,x_d)\in \reals^d | 0\le x_1\le 1; \epsilon x_{i-1} \le x_i \le 1-\epsilon x_{i-1}\hspace{.04in}  {\rm for } 
\hspace{.04in}  i>1 \} $.      For example, 
the  2-dimensional Klee-Minty cube in $\reals^2$ with $\epsilon = 1/3$  is the convex hull of vertices $(0,0), (0,1), (1, 1/3), (1,2/3)$; for cost 
vector $\bc = (0.1,1)$,  the resulting graph $G(p,\bc )$ has a directed path $p$ from $(0,0)$ to $(1,1/3)$ to $(1,2/3)$ to  $(0,1)$  that visits all 4 vertices.  Notice that this 2-polytope also has  a directed edge $e$ from $(0,0)$ to $(0,1)$, which means that the aforementioned  
directed path $p$ departs from the edge $e$ at $(0,0)$  and later revisits $e$ at $(0,1)$.  This demonstrates that  $G(P,\bc )$ is not  a Hasse diagram in this case.

 For further  background and properties of 
Klee-Minty cubes, including  a  helpful illustration of a 3-dimensional Klee-Minty cube, we refer readers to
\cite{GHZ}.
\end{ex}

Recall for a polytope $P$ and generic 
cost vector $\bc $  that the {\bf source} of a face $F$ is the vertex  
$v \in F$ minimizing $\bc \cdot v $ while the {\bf sink} of $F$ is the vertex $w\in F$ maximizing
$\bc \cdot w$.  The source of $F$ is also the unique vertex  
 in $F$ only having outward oriented edges to other vertices of $F$ while the sink is
the unique vertex 
 in  $F$ only having inward oriented edges to it from other vertices of $F$.  The 
uniqueness assumption that is  implicit  in these notions above  is justified as follows:

\begin{rk}\label{face-unique-source-sink}
It is well known  (cf.  Theorem 3.7 in  \cite{zi}) and straight-forward to see that there is a unique source and  a unique sink in the directed graph   on the 1-skeleton of any face $F$  of a   polytope $P$ obtained by restricting $G(P,\bc )$ for a generic $\bc $  to the face $F$. 
\end{rk}

Next is a simple  observation that is surprisingly useful in various proofs later in the paper.

\begin{rk}\label{2-face-source-sink}
 The directed graph $G(P,\bf{c})$  restricted to any 2-dimensional face  $F$ 
consists of   two directed paths from the unique source of $F$ to the unique sink of $F$.    These paths are disjoint except at their  two endpoints, i.e., the source and sink.  
 \end{rk}

Next we  introduce and justify existence of 
the following pair of notions which are dual to each other  (the first of which was already introduced more informally in the introduction):

\begin{defn}\label{pseudo-def}
Consider any simple
polytope $P$ and any generic  cost vector $\bc$ such that $G(P,\bc )$ is the  Hasse diagram of a poset $Q$.
  Define the {\bf pseudo-join} of any collection $S$ of  atoms of an interval $[u,v]$ in $Q$, denoted $psj(S)$,   to be the sink of the 
unique smallest  face $F_S$ of $P$  that contains $u $ and all of the elements of $S$.   
 Define the {\bf pseudo-meet} of any collection  $T$ 
of coatoms in  an interval $[u,v]$ in $Q$, denoted $psm(T)$,   to be the source vertex  of the smallest face  $G_T$
containing $v$ and all of the elements of $T$.
 \end{defn}

Next let us justify the existence (and uniqueness) of such pseudo-joins.

\begin{lem}
Consider  a simple polytope $P$ and a generic cost vector $\bc $ such that $G(P,\bc )$ is the Hasse diagram of a poset $Q$.  Any collection $S$ of atoms in 
any interval $[u,v]$ in $Q$ has a unique pseudo-join.
\end{lem}

\begin{proof}
For any vertex $v$ in $G(P,\bc )$  
and any collection $S$ of neighboring vertices that cover $v$ in $Q$, consider the  edges $e_1,\dots ,e_i$ emanating outward from $v$ whose other 
endpoints are the elements of $S$.   A face of $P$ will contain all of  the vertices in $S\cup \{ v\} $ if and only if it contains all of the edges $e_1,\dots ,e_i$. 
By the definition of  simple polytope, 
there exists an   $i$-face $F$ containing $v$  and all of these edges $e_1,\dots ,e_i$.    This is necessarily the unique smallest face containing $v$ and all  of the edges
$e_1,\dots ,e_i$,
due to  any such face  needing to be  
 at least $i$-dimensional combined with  
  the fact that the intersection of any two faces containing all of these edges  $e_1,\dots ,e_i$ 
will also contain all of  these same edges.  
Since $F$ has a unique sink, 
this  makes the sink of $F$ 
the desired pseudo-join.
\end{proof}

 \section{Poset theoretic results regarding 
1-skeleta of simple  polytopes}\label{polytopes-section}

In this section, we  develop  a series of 
general results about directed paths in the graphs $G(P,\bc )$ under the assumptions  that $P$ is a simple polytope, $\bc $ is a generic cost vector, and $G(P,\bc )$ is the Hasse diagram of a poset $Q$.  
This leads to results later  in this section 
on the topological  structure of the  order complexes of such posets.

\begin{lem}\label{not-same-source-sink}
Let $P$ be   a simple polytope  with faces $F\subseteq G$ satisfying $dim (G) = dim (F) +1$.  Let $\bc $ be  a generic 
cost vector  such that
$G(P,\bc )$  
  is the Hasse diagram of a poset $Q$.    
 Given vertices  $v,w\in F$ with  a directed path $p_F$ from $v$ to $w$  fully contained  in $F$, $G(P,\bc )$ cannot have 
 an edge from $v$  directed outward to some vertex $v'\in G\setminus F$  and  an edge directed  from some 
 $w'\in G\setminus F$   to $w$.
\end{lem}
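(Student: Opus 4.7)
The plan is to derive a contradiction by constructing a 2-face of $P$ whose source and sink are directly connected by an edge, which is forbidden by the Hasse diagram hypothesis. Since $G$ is a face of the simple polytope $P$, it is itself a simple polytope, and because $F$ is codimension one in $G$, each vertex $u \in F$ has a unique edge of $G$ leaving $F$; call this the \emph{cross edge} at $u$, and say it points outward or inward according to its orientation under $\bc$. Writing $F = H_1 \cap \cdots \cap H_k$ and $G = H_1 \cap \cdots \cap H_{k-1}$ as intersections of facets of $P$, the cross edge at any $u \in F$ is obtained from the $d$ facets at $u$ by dropping $H_k$.

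Along the path $p_F : v = v_0 \to v_1 \to \cdots \to v_n = w$, I would track whether the cross edge at each $v_i$ points outward or inward. By hypothesis, the cross edge at $v_0 = v$ is outward (it is $v \to v'$) and the cross edge at $v_n = w$ is inward (it is $w' \to w$). Letting $j$ be the largest index with an outward cross edge, set $x = v_j$ and $y = v_{j+1}$, so $x \to y$ is a directed edge in $F$ with the cross edge at $x$ outward (say $x \to x^*$) and the cross edge at $y$ inward (say $y^* \to y$). Let $F_{xy}$ be the unique 2-face of $P$ containing the edges $x \to y$ and $x \to x^*$, which exists by simplicity of $P$. A facet calculation shows that $F_{xy}$ is the intersection of the facets at $x$ other than $H_k$ and the one dropped for the edge $xy$, hence $F_{xy} \subseteq G$; moreover $F_{xy} \cap F = F_{xy} \cap H_k$ is a proper face of $F_{xy}$ (since $x^* \not\in H_k$) containing both $x$ and $y$, so it must be exactly the edge $xy$. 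Consequently, $x$ and $y$ are the only vertices of $F_{xy}$ lying in $F$.

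Now view $F_{xy}$ as a polygon. The vertex $y$ has two neighbors in it, namely $x$ (via the edge $xy$) and some other vertex $a$, which must lie in $G \setminus F$ by the previous step. Since $ya$ is then a cross edge at $y$ and the cross edge is unique, $a = y^*$ and the edge is oriented $y^* \to y$. Therefore both edges of $F_{xy}$ incident to $y$ point inward, making $y$ the sink of $F_{xy}$; symmetrically $x$ is its source. But then, in $F_{xy}$, the source $x$ and sink $y$ are joined directly by the edge $x \to y$, while the other side of the polygon supplies a directed path $x \to x^* \to \cdots \to y$ of length at least two. The Hasse diagram hypothesis demands that every directed edge be a cover relation, but the longer path forces intermediate elements strictly between $x$ and $y$ in $Q$, yielding the required contradiction.

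The main obstacle is verifying that the 2-face $F_{xy}$ sits inside $G$ and intersects $F$ in exactly the single edge $xy$; this rests on the standard facet-intersection description of faces of a simple polytope, along with the careful choice of $x$ and $y$ as a switching pair along $p_F$. Once that is in hand, the conclusion follows cleanly from the uniqueness of the cross edge at $y$ together with a single application of the Hasse diagram hypothesis to $F_{xy}$.
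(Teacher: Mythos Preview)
Your proof is correct and follows essentially the same route as the paper's own argument: locate a consecutive pair $x\to y$ along $p_F$ where the unique edge into $G\setminus F$ switches from outward to inward, produce the 2-face in $G$ through the edge $xy$ that is not contained in $F$, identify $x$ and $y$ as its source and sink, and contradict the Hasse diagram hypothesis via the second boundary path. The only cosmetic difference is that the paper argues the two candidate 2-faces (through $xx^*$ and through $yy^*$) coincide via the Boolean structure of upper intervals in the face poset of a simple polytope, whereas you reach the same conclusion by an explicit facet-intersection calculation showing $F_{xy}\cap F$ is exactly the edge $xy$.
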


\begin{proof}
Suppose there is a directed path $p_F$ of the type we aim to exclude.  
Since $P$ is simple, each vertex  $u$  in $p_F$  has exactly one edge $e_u$  
incident to it whose other endpoint is  in $G \setminus F$.   If $e_u$  is oriented outward from $u$, denote this by  $o(u) = +1$, whereas we  say $o(u) =  -1 $ when $e_u$ is oriented towards $u$.  
Our hypotheses give us   that  $v\in p_F$  has $o(v) = +1 $
 while  $w \in p_F$  has $o(w) = -1 $.    This forces 
   the existence of  two consecutive vertices $v_1\rightarrow v_2$  in  the directed path $p_F$ with  $o(v_1) = +1$ and  $o(v_2) = -1$. 
   
 We  now  show   that the  edges $e_{v_1, x_1}$ and $e_{v_2,x_2}$ from $v_1\in F$ to some $x_1\in G\setminus F$ and from some $x_2\in G\setminus F$ to $v_2 \in F$ 
 must both be contained in  a single  2-dimensional face  $F(e_{v_1},e_{v_2})$ 
 in $G$ that also contains the edge $e_{v_1,v_2}$. 
 The fact that $P$ is simple implies 
 that  the pair of edges $e_{v_1,v_2}$ and $e_{v_1,x_1}$  are both contained in  a 2-face $F(v_1,v_2,x_1)$.  Moreover, $F(v_1,v_2,x_1)\not \subseteq F$ since $x_1\not\in F$.  Likewise there exists  a
 2-face $F(v_1,v_2,x_2)$ containing $e_{v_1,v_2}$ and $e_{x_2,v_2}$ with 
 $F(v_1,v_2,x_2)\not \subseteq F$ because 
 $x_2\not\in F$.  
  But the fact that $P$ is simple implies that each edge  $e$ in  $F$ is contained in a  unique 2-face $\sigma $  in $G$  such that 
  $\sigma \not\subseteq F$; this  follows from 
  each upper interval in the face poset of a simple polytope  being a Boolean lattice and in particular the interval $[e,G]$ being a Boolean lattice where all but one of its atoms is in $[e.F]$.  
  Applying this observation   to the edge $e_{v_1,v_2}$ yields
  that $F(v_1,v_2,x_1)$ and $F(v_1,v_2,x_2)$ must both be this  unique 2-face containing $e_{v_1,v_2}$ and  not contained in $F$.  Thus, 
  $F(v_1,v_2,x_1) = F(v_1,v_2,x_2)$ with these both  being  the unique 2-face   which contains all three   
  edges $e_{v_1,v_2}, e_{v_1,x_1}, $ and  $e_{v_2,x_2}$.   
  
 Observe then  that $v_1$ must be the unique source for $F(v_1,v_2,x_1)$ and that  $v_2$ must be the unique sink for $F(v_1,v_2,x_1)$.  But there is a directed edge from $v_1$ to $v_2$ in 
  $G(P,\bc )$, namely  $e_{v_1,v_2}$. 
      This   edge  must constitute  one of the two directed paths from the source to the sink  in the boundary of  $F(v_1,v_2,x_1)$ that are guaranteed to exist by 
      Remark ~\ref{2-face-source-sink}.   For $G(P,\bc )$ to be the Hasse diagram of a poset $Q$,   $e_{v_1,v_2}$ must give rise to 
       a cover relation  $v_1 \prec v_2$ in $Q$.  However,   the other directed path from $v_1$ to $v_2$ in the boundary of $F(v_1,v_2,x_1)$ 
       gives   a saturated chain from $v_1$ to $v_2$ in $Q$ having $x_1$ as an 
       intermediate element, a contradiction to $e_{v_1,v_2}$ being a cover relation. 
\end{proof}


With a little more work we also deduce the following: 

\begin{lem}\label{source-non-revisit}
Let $P$ be   a simple polytope  and let  $\bc $  be a generic cost vector  such that $G(P, \bc)$ is  the Hasse diagram of a poset $Q$.  
Let  $F$ be any face of $P$  containing the source vertex of $P$. 
Then  
each  edge  $e_{v,w}$ with  $v\in F$ and $w\not\in F$ must be 
 oriented from $v$ to  $w$.  
Likewise for any face $F'$ of $P$ containing the sink vertex of $P$, any 
edge $e_{x,y}$ with  $x\in F'$ and $y\not \in F'$ must be oriented from $y$ to  $x$.
\end{lem}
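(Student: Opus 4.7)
The plan is to prove the source case by contradiction using Lemma \ref{not-same-source-sink}, and then obtain the sink case by the symmetric argument (applied to $-\bc$). Suppose $s$ is the source of $P$ with $s \in F$, and suppose for contradiction that there is an edge $e_{v,w}$ with $v \in F$, $w \notin F$, oriented from $w$ to $v$. The first step is to produce a single face of $P$ that contains both $F$ and this offending edge. Since upper intervals in the face poset of a simple polytope are Boolean, the edge $e_{v,w}$ (together with the face $F$) determines a unique face $G$ of dimension $\dim F + 1$ with $F \subseteq G$ and $w \in G$.

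Next I would exhibit an outward edge from $F$ at $s$ inside $G$. Because $s$ is the global minimizer of $\bc$ and lies in $F \subseteq G$, it is also the source of both $F$ and $G$, hence has only outward-oriented incident edges. Simplicity of $P$ forces $s$ to have exactly $\dim F$ incident edges lying in $F$ and exactly $\dim F + 1$ incident edges lying in $G$, so there is a unique edge $e_{s,s'}$ at $s$ which lies in $G$ but not in $F$; this edge is oriented from $s$ to some $s' \in G \setminus F$. Meanwhile, since $s$ is the source of $F$, the well-known reachability property for $G(P,\bc)|_F$ (Remark \ref{face-unique-source-sink}) provides a directed path from $s$ to $v$ staying inside $F$.

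With this setup, if $s \neq v$, then Lemma \ref{not-same-source-sink} applies directly with $v_1 = s$ and $v_2 = v$: the path in $F$ from $s$ to $v$ exists, $v_1 = s$ has an outward edge $e_{s,s'}$ to $G \setminus F$, and $v_2 = v$ has an inward edge $e_{w,v}$ from $G \setminus F$, a contradiction. The corner case $s = v$ is immediate, since then $s$ itself would have an inward edge from $w$, contradicting that $s$ is the global source. The dual statement for faces containing the sink follows by reversing orientations, i.e.\ by replacing $\bc$ by $-\bc$, which keeps $G(P,-\bc)$ a Hasse diagram (of the dual poset $Q^*$) and swaps the roles of source and sink.

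The main content is packaged inside Lemma \ref{not-same-source-sink}; the only real obstacle is the simple-polytope bookkeeping needed to identify the unique enclosing face $G$ and the unique outward edge from $s$ into $G \setminus F$. Once those two pieces are in place, the application of Lemma \ref{not-same-source-sink} is essentially mechanical.
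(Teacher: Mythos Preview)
Your proposal is correct and follows essentially the same approach as the paper: assume an inward edge from $G\setminus F$ to some $v\in F$, take the unique face $G\supseteq F$ of dimension $\dim F+1$ containing that edge, use a directed path in $F$ from the source $s=\hat 0$ to $v$, and invoke Lemma~\ref{not-same-source-sink}. You are in fact more explicit than the paper in justifying the existence of $G$, in exhibiting the outward edge from $s$ into $G\setminus F$ via the simplicity count, and in separating out the corner case $s=v$; the paper leaves these points implicit.
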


\begin{proof}
Let $F$ be a face of $P$  containing the source vertex of $P$, so in other words containing  $\hat{0}\in Q$.  Suppose there is  an  edge 
 $e_{w,v}$ oriented from $w\in P\setminus F$ to $v\in F$.  Since $\hat{0}$ is the unique source 
 in $P$ (and hence in $F$), there must be a  directed path $\hat{0} = v_0 \prec v_1 \prec v_2 \prec \cdots \prec v_k 
 \prec v$ staying within $F$.  Now we will apply Lemma ~\ref{not-same-source-sink} to the face $F$ viewed as a codimension one face of the unique simple polytope $G$ containing both  $e_{w,v}$ and $F$. 

  The proof of the statement for  any face $F'$ containing the sink  of $P$ is similar. 
\end{proof}


\begin{cor}\label{source-face-sink-face}
Let $P$ be a simple polytope and  let  $\bf{c}$ be a generic cost vector  such that $G(P,\bf{c})$ is the Hasse diagram of a poset.  Then each face $F$  of $P$ which contains $\hat{0}$ has the  property that there are no directed paths which  
revisit $F$.  
Likewise for each  face $G$ containing $\hat{1}$, 
there cannot be  any directed paths that  
revisit  $G$.  
\end{cor}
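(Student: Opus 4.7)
The plan is to derive this corollary directly from Lemma~\ref{source-non-revisit}, which does essentially all of the work. Let $F$ be a face of $P$ containing $\hat{0}$. The first half of Lemma~\ref{source-non-revisit} states that every edge $e_{v,w}$ with $v \in F$ and $w \notin F$ is oriented outward from $v$ to $w$; equivalently, no directed edge of $G(P,\bc)$ points from a vertex outside $F$ to a vertex of $F$.

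Given this, I would argue by contradiction. Suppose a directed path $v_0 \to v_1 \to \cdots \to v_k$ has $v_0, v_k \in F$ but visits some vertex $v_j \notin F$. Let $j^{\ast}$ be the largest index with $v_{j^{\ast}} \notin F$; since $v_k \in F$, we have $j^{\ast} < k$, and so $v_{j^{\ast}+1} \in F$. Then the directed edge $v_{j^{\ast}} \to v_{j^{\ast}+1}$ points from outside $F$ into $F$, contradicting the consequence of Lemma~\ref{source-non-revisit} extracted above. Hence no such departure-and-return path exists.

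For a face $G$ containing $\hat{1}$ the argument is symmetric, and in fact even simpler. The second half of Lemma~\ref{source-non-revisit} states that every edge joining a vertex of $G$ to a vertex outside $G$ is oriented inward into $G$. Consequently no directed edge of $G(P,\bc)$ leaves $G$, so no directed path starting at a vertex of $G$ can ever depart $G$ in the first place, let alone depart and subsequently return.

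There is no genuine obstacle here: the substantive work is already packaged inside Lemma~\ref{source-non-revisit}, and the corollary amounts to the observation that a unidirectional interface between $F$ and its complement forbids any round trip. The only small point requiring care is the ``largest index'' trick in the first argument, which handles the possibility that a path might attempt to exit and re-enter $F$ multiple times by focusing attention on the final re-entry edge.
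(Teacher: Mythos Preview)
Your proof is correct and is precisely the intended argument: the paper states this result as an immediate corollary of Lemma~\ref{source-non-revisit} without giving a separate proof, and you have supplied exactly the routine details (the ``last re-entry edge'' contradiction) that the paper leaves implicit.
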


Corollary ~\ref{source-face-sink-face}  has the following important special case. 

\begin{cor}
Let $S$ be any set of atoms in a poset $P$ whose Hasse diagram is $G(P,\bc )$ for $P$ a simple polytope.  Then 
each directed path of $G(P,\bc )$  from $\hat{0} $ to $psj (S)$ 
stays within the unique  smallest  face $F_S$  of $P$ containing $\hat{0}$ and all of the elements of  $S$.    
Likewise for each set $T$ of coatoms, each directed path from
$psm(T)$ to $\hat{1}$ stays within the unique  smallest face $F_T$ containing $\hat{1}$ and 
all of the coatoms in $T$.

\end{cor}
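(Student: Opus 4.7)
The plan is to derive this corollary immediately from the previous Corollary~\ref{source-face-sink-face}. First I would unpack the relevant definitions: by Definition~\ref{pseudo-def}, the face $F_S$ is the unique smallest face of $P$ containing $\hat{0}$ and all atoms in $S$, and $psj(S)$ is defined to be the sink of $F_S$. Hence both endpoints $\hat{0}$ and $psj(S)$ of any directed path under consideration are vertices of $F_S$, and $F_S$ itself is a face containing the global source $\hat{0}$ of $P$ (noting that under the Hasse diagram hypothesis the source of $P$ is precisely $\hat{0}$ and the sink is precisely $\hat{1}$).

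Next I would argue by contradiction. Suppose some directed path
\[ \hat{0} = v_0 \to v_1 \to \cdots \to v_k = psj(S) \]
contained a vertex $v_i \notin F_S$. Since $v_0 \in F_S$ and $v_k \in F_S$, the path would depart $F_S$ at some step $j \le i$ and then later return to $F_S$ at some step $> i$. But this is precisely the behavior forbidden by the first half of Corollary~\ref{source-face-sink-face} for any face containing $\hat{0}$. Hence every vertex $v_j$ of the path must lie in $F_S$, as required.

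The dual statement regarding $T$ and $psm(T)$ follows by an entirely symmetric argument, using the face $F_T$ containing $\hat{1}$ and all coatoms in $T$ in place of $F_S$, and invoking instead the second half of Corollary~\ref{source-face-sink-face} concerning faces that contain $\hat{1}$. Since the corollary is an immediate specialization of the previously established face non-revisiting property for faces containing the global source or sink, I do not anticipate any substantive obstacle in this particular proof; the real work was already carried out in Lemma~\ref{source-non-revisit}, which in turn relied on the combinatorial analysis of 2-faces performed in Lemma~\ref{not-same-source-sink}.
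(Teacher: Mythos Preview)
Your proposal is correct and takes essentially the same approach as the paper: the paper simply presents this corollary as an ``important special case'' of Corollary~\ref{source-face-sink-face} without giving a separate proof, and your argument spells out exactly why it is such a special case. The only minor elaboration you add---making explicit that both endpoints $\hat{0}$ and $psj(S)$ lie in $F_S$ and then invoking the nonrevisiting property of faces containing $\hat{0}$---is precisely the unpacking the paper leaves implicit.
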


This in turn  implies the following:

\begin{cor}\label{join-inside}
Let $P$ be  a simple polytope  and let  $\bf{c}$ be a generic cost vector such that $G(P,\bf{c})$ is the Hasse diagram of a lattice $L$. 
Then the join  of any collection $S$ of atoms in $L$  is contained in the unique  smallest face $F_S$ containing $\hat{0}$ and all of  the atoms in $S$.  
Dually,  the meet of any collection $T$ of coatoms in $L$ is contained in the unique  smallest face $G_T$ containing  $\hat{1}$ and all of the coatoms in $T$.  
\end{cor}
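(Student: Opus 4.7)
The plan is to show $J(S) \in F_S$ by producing a directed path from $\hat{0}$ to $psj(S)$ that passes through $J(S)$, and then invoking the preceding corollary, which guarantees that every such path stays within $F_S$. The meet case will then follow by passing to the dual poset and the same polytope with cost vector $-\bc$.

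First I would establish that $psj(S)$ is itself an upper bound for $S$ in $L$. Each atom $a \in S$ belongs to $F_S$ by definition, and $psj(S)$ is the unique sink of the restriction of $G(P,\bc)$ to $F_S$ (cf. Remark \ref{face-unique-source-sink}); hence there is a directed path from $a$ to $psj(S)$ inside $F_S$, so $a \le psj(S)$ in $L$. Since $L$ is a lattice, taking the join over $a\in S$ gives $J(S) \le psj(S)$.

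Next, since $J(S)\ge a > \hat{0}$ for any $a\in S$ (assuming $S$ is nonempty; the empty case is trivial because $J(\emptyset) = \hat{0} \in F_S$), the lattice $L$ admits a saturated chain $\hat{0} = u_0 \prec u_1 \prec \cdots \prec u_r = J(S)$, and similarly a saturated chain $J(S) = u_r \prec u_{r+1} \prec \cdots \prec u_s = psj(S)$. Concatenating these saturated chains and translating back through the Hasse diagram property of $G(P,\bc)$, I obtain a directed path in $G(P,\bc)$ from $\hat{0}$ to $psj(S)$ that visits the vertex $J(S)$.

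Now I apply the preceding corollary: every directed path from $\hat{0}$ to $psj(S)$ stays inside the face $F_S$. In particular $J(S)$, being a vertex of this path, lies in $F_S$, which is exactly the conclusion. For the statement about meets, I would invoke duality: reversing $\bc$ turns $G(P,\bc)$ into the Hasse diagram of the dual lattice $L^*$, swaps the roles of $\hat{0}$ with $\hat{1}$ and of atoms with coatoms, and sends pseudo-joins to pseudo-meets and joins to meets, so the meet statement is obtained from the join statement applied to $(P,-\bc)$. The only subtle point is verifying that the concatenation of two saturated chains in $L$ indeed corresponds to a single directed path in $G(P,\bc)$, but this is immediate because cover relations in $L$ are precisely the directed edges of $G(P,\bc)$ under the Hasse diagram hypothesis.
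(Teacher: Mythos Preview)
Your proof is correct and follows essentially the same approach as the paper: construct a directed path through $J(S)$ ending at $psj(S)$, then use the fact that $F_S$ contains the source $\hat{0}$ to conclude the path cannot leave $F_S$. The only cosmetic difference is that the paper starts its path at an atom $a\in S$ rather than at $\hat{0}$ and cites Lemma~\ref{source-non-revisit} directly rather than the intermediate corollary, but the underlying mechanism is identical.
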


\begin{proof}
There is a directed path from any  atom $a\in S$ to the join    $J(S)$ of the set of elements of  $S$ since $J(S)$ is an upper bound for the elements of 
$S$.   
There is also a directed path from 
$J(S)$ to  the pseudo-join $psj(S)$ by virtue of $psj(S)$ being an upper bound for the elements of $S$, hence being greater than or equal to  the least upper bound 
 $J(S)$.  
  Concatenating these directed paths yields  a directed path $p$ from $a$ to $psj(S)$.  By Corollary ~\ref{source-face-sink-face}, 
   $p$ must stay within $F_S$ since
  $\hat{0}\in F_S$.  In particular, this  implies $J(S) \in F_S$.   The proof for coatoms is similar.
\end{proof}


Next is a result 
that will serve as  a key ingredient 
in the   proof of  Theorem ~\ref{pseudo=join}.   The proof of Theorem ~\ref{2-atom-base-case} 
 is one of  the trickiest proofs in the  paper, 
so we give an especially high level of detail in the argument below.


\begin{thm}\label{2-atom-base-case}
Let $P$ be a simple polytope and  let  $\bf{c}$ be  a  generic cost vector  such that  $G(P,\bf{c})$ is the Hasse diagram of a lattice $L$.  
Let $F$ be a 2-face  in $P$, let $u$ be  the source vertex  in $F$, and let $x, y \in F$ be vertices both 
 covering $u$ in $L$. 
 Then   $psj(x,y) = x\vee y $.    Dually,  for any 2-face $F'$ in $P$  with sink $v$ and elements  $x',y' \in F'$ both covered by $v$ in $L$, 
 $psm(x',y') = x'\wedge y' $.  
\end{thm}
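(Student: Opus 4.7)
The plan is to prove $psj(x,y) = x \vee y$; the pseudo-meet statement then follows by reversing the cost vector ($\bc \to -\bc$), which swaps joins with meets, sources with sinks, and atoms with coatoms, while preserving the Hasse-diagram-of-lattice hypothesis.

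First I would identify the pseudo-join as the sink $w$ of $F$. Since $x$ and $y$ cover $u$ in $L$, the edges $e_{u,x}$ and $e_{u,y}$ lie in $F$; because $P$ is simple, these two edges at $u$ determine a unique $2$-face of $P$, which must equal $F$. Hence $F$ is the unique smallest face containing $\{u,x,y\}$, so $psj(x,y) = w$. By Remark~\ref{2-face-source-sink}, the boundary of $F$ consists of two directed paths from $u$ to $w$: one through $x$, say $u\to x\to x_2\to \cdots\to x_k = w$, and one through $y$, say $u\to y\to y_2\to \cdots\to y_m = w$. Since $G(P,\bc)$ is a Hasse diagram, no $2$-face of $P$ can be a triangle (the three directed edges of a triangular $2$-face would force one of them to be a non-cover directed edge), so $x$ and $y$ are non-adjacent in $F$ and $k, m \ge 2$. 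These paths witness $w \ge x$ and $w \ge y$ in $L$, hence $x \vee y \le w$.

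For the reverse inequality I would set $z := x \vee y$ and aim to force $z = w$ by showing that $z$ lies on both boundary paths of $F$; since these paths share only $u$ and $w$, and $z \ge x > u$, this would give $z = w$. In the case $u = \hat{0}$, Corollary~\ref{join-inside} places $z$ in the smallest face containing $\{x,y\}$; since $x$ and $y$ are non-adjacent in the $2$-face $F$, no proper subface of $F$ contains them both, so this smallest face is $F$ itself, and $z \in F$. Corollary~\ref{source-face-sink-face} (applied to $F$, which contains $\hat{0}$) then ensures any directed path from $y$ to $z$ remains in $F$, so $z \in \{y, y_2, \ldots, w\}$; symmetrically $z \in \{x, x_2, \ldots, w\}$; because $x$ and $y$ are incomparable covers of $u$, $z$ equals neither of them, and intersecting the two sets leaves only $z = w$.

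The main obstacle is extending this argument to the general case $u \ne \hat{0}$, in which $F$ need not contain the global source of $P$ and so neither Corollary~\ref{join-inside} nor Corollary~\ref{source-face-sink-face} directly applies. To handle this, I would invoke Lemma~\ref{not-same-source-sink} applied to $3$-faces $G$ of $P$ that contain $F$ as a codimension-one subface: the simplicity of $P$ together with the Hasse-diagram property rigidly constrains the directed edges between $F$ and its complement. Specifically, a directed path from $y$ that exits $F$ and attempts to return at an interior vertex $x_i$ of the $x$-boundary would produce two distinct saturated chains between a cover pair in $L$, contradicting the Hasse-diagram structure exactly as in the proof of Lemma~\ref{not-same-source-sink}. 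Making this local-to-global transfer precise --- while using the lattice property of $L$ to ensure $z = x \vee y$ exists and is unique --- is where the bulk of the technical work lies.
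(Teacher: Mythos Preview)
Your treatment of the case $u=\hat 0$ is fine, and the duality reduction is correct. The genuine gap is exactly where you flag it: for $u\ne\hat 0$ you do not actually supply an argument, and the sketched plan of ``invoking Lemma~\ref{not-same-source-sink}'' does not go through as stated. That lemma requires a directed path inside $F$ from the exit vertex to the re-entry vertex, and it requires both the exit edge and the re-entry edge to land in the \emph{same} codimension-one superface $G\supset F$. Neither is guaranteed here: a path from $y$ to $x\vee y$ could exit $F$ at some $y_j$ on the $y$-side and re-enter at some $x_i$ on the $x$-side (no internal $F$-path from $y_j$ to $x_i$), and even for same-side re-entry the exit and entry edges may determine different $3$-faces. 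So the Hasse-diagram contradiction you want does not fire directly.

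The paper closes this gap with an idea you are missing: an extremal choice. One assumes a counterexample exists and picks a $2$-face $F$ (or its dual) whose source $u$ has \emph{maximal} longest-chain distance $d(F,u)$ from $\hat 0$ among all counterexamples. This maximality buys exactly what Corollaries~\ref{source-face-sink-face}--\ref{join-inside} bought you when $u=\hat 0$: every $2$-face whose source lies strictly above $u$ already satisfies $\mathrm{psj}=\vee$. Now take a path that exits $F$ at $y_i$ (to some $z\notin F$) and re-enters at $y_k$ with $k-i$ minimal. The $2$-face through $y_i,y_{i+1},z$ has source $y_i>u$, so by maximality $y_{i+1}\vee z=\mathrm{psj}(y_{i+1},z)$ lies in that $2$-face; comparing with $y_k$ produces a re-entry with strictly smaller $k-i$, contradicting minimality. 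This bootstrapping via the extremal choice is the engine of the proof, and it is what your proposal needs.
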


\begin{proof}
If   a 2-face $\sigma $ in $P$  with source $u$ has  
$x, y\in \sigma  $ satisfying $u\prec x$ and $u\prec y$  in $L$ with  $psj(x,y) \ne x\vee y$, let us say that $\sigma $ has property (LT). 
  Since $psj(x,y)$ is an upper bound for $x$ and $y$, property (LT)  implies 
$x\vee y <_L  psj(x,y)$ where $<_L$ denotes the order relation for  $L$. 
 Given any  $u\in L$, let $d(u)$ be 
 the length of the longest saturated chain in $L$  from $\hat{0}$ to $u$. 

We will prove below  that  $P$ cannot have any  2-faces with property (LT).    
The dual case will then  follow  
 by replacing the cost vector $\bc $ with its negation $-\bc $ which will reverse the direction of all the directed edges in $G(P,\bc )$, 
 having the effect of replacing $L$ by the dual poset $L^*$ with  the meet  operation for $L$ becoming  the join operation for $L^*$ and the  pseudo-meet operation in $G(P,\bc )$ becoming the  pseudo-join operation  in $G(P,-\bc )$.  
%
%
%

 Supposing  there were   a   2-face 
 with property (LT), 
 choose  such a 2-face  $F$ with source $u$  
with   $d(u)$ as large as possible among all 2-faces with property (LT).
Denote this largest value for  $d(u)$   by $d^{\max }$. 
We will  show    
 that the existence of such a 2-face  $F$ forces the existence 
  of such a 2-face  $F''$  with source $u''$ also satisfying property (LT)  with $d(u'')>d^{\max }$, thereby yielding  a contradiction.
  Figure 1 may help in reading this upcoming argument. 

Let $x$ and $y$ be the elements  of $F$ covering $u$ in $L$.     
The Hasse diagram of $F\cap (u,psj(x,y))$ is obtained by removing the vertices $u$ and 
$psj(x,y)$ from the boundary of $F$;   
this  yields two disjoint nonempty paths, one containing $x$ and the other containing $y$ (by  Remarks ~\ref{face-unique-source-sink} and  ~\ref{2-face-source-sink}).  
Denote the vertices proceeding upward in $L$ along one of these two  paths 
from $u$ to $psj(x,y)$ 
as $u, x_1, x_2,\dots , x_r$ where  $x_1=x$ and $x_r = psj(x,y) $.  
Denote the vertices proceeding  upward in $L$ along the other  path from  
$u$ to $psj(x,y)$ as 
$u, y_1,y_2,\dots ,y_s$ where $y_1 = y$ and $y_s = psj(x,y)$.  

The strict  
inequality $x\vee y <_L psj(x,y)$  implies either (a)  $x\vee y\not\in F$ or (b)  $x\vee y \in \{ x_1,\dots ,x_{r-1},y_1,\dots ,y_{s-1}\} .$  In the latter case, we may assume 
without loss of generality that $x\vee y\in \{ x_1,\dots ,x_{r-1}\}$.  Let us now show that in either case there must be some directed path that exits $F$ at some $y_i$ with $1\le i < r-1$ and revisits $F$.  This follows in case (a) from the existence of a directed path from $y = y_1$ to $x\vee y\not\in F$ along with the existence of a directed path from $x\vee y $ to $psj(x,y)$, the latter of which exists because $psj(x,y)$ is an upper bound for  $x$ and $y$ and hence  must be greater than or equal to $x\vee y$.  In case (b), we use the existence of  a directed path from $y = y_1$ to $x\vee y\in \{ x_1,\dots ,x_{r-1}\}$ together with the  disconnectedness of the restriction of $F$ to the vertex set $\{ x_1,\dots ,x_{r-1},y_1,\dots ,y_{s-1}\} $; this directed path  from $y_1$ to $x\vee y$ must  exit and reenter $F$ to get from the connected component containing $y_1$ to the  connected component containing $x\vee y$.  In either case, we have now shown that  there is a directed path  that exits $F$ at some $y_i$ for $i\in \{ 1,2,\dots ,s-1\} $  and revisits $F$. 

       \begin{figure}[h]\label{2-face-picture}
        \begin{picture}(325,312)(-80,170)
 
 \put(65,320){$F$}
 \put(172,315){$G$}
 
  \put(70,462){$v=psj(x,y)$}
       \dashline{3}(76,458)(107, 420)
       \curve(70,458, 28,420)
  \put(20,410){$x_{r-1}$}
  \put(107,410){$y_{s-1}$} 

\dashline{3}(120,405)(133,360)

  \put(0,360){$x_3$}

  \dashline{3}(9,372)(20,405)     
        \curve(5, 357, 5,300) 

\put(0, 290){$x_2$}
\curve(7,283, 23,230)
\put(18,220){$x = x_1$}

\curve(28, 215, 65, 185)
\put(70, 180){$u$}

\curve(80, 185, 110, 215)
\put(110, 220){$y = y_1$}

\dashline{3}(118,228)(135,255)

          
   \put(135,260){$y_i$}
   \curve(138, 267, 140, 300)
   \put(137, 305){$y_{i+1}$}
   \curve (142, 267, 190, 300)
   \put(195, 303){$z$}
   
   \curve(140, 313, 137, 345)
   \put(133, 350){$y_{i+2}$}
   
   \curve(144, 313, 170, 340)
   \put(170, 345){$z^+$}
   
   \dashline{3}(201,312)(223, 352) 
   \put(218, 360){$psj(z, y_{i+1}) = z\vee y_{i+1}$}

  \dashline{3}(220,370)(86,455) 
   
   \dashline{3}(180, 347)(212, 355) 
   

        \end{picture}
        \caption{Diagram for proof of Theorem ~\ref{2-atom-base-case}}   
\end{figure}

  
 Consider  a   directed path $p_{i}$ 
 that exits from $F$ via  an edge from some  $y_i\in F$  
  to some $z\not\in F$ and later  revisits  $F$. 
Choose such  $y_{i}$ and  $p_i$   
with  $i$ as large as possible.  See Figure 1. 
 Our choice of   $F$ with source $u$ such that   $d(u) = d^{\max }$ 
 allows us to deduce that  $y_{i+1}\vee z = psj(y_{i+1},z)$ as follows: otherwise  $y_i,y_{i+1}$ and $z$ would all belong to  a 2-face having source $y_i$ and having
 property (LT) with  $d(y_i) >  d(u)$,  
 contradicting our choice of $F$ and  $u$ with $d(u)$ as large as possible.
%
Thus, we may assume  $y_{i+1}\vee z = psj(y_{i+1},z)$.  
But  the identity  $y_{i+1}\vee z =psj(y_{i+1},z)$  implies that   
    the four vertices 
 $y_i, y_{i+1}, z$ and $y_{i+1}\vee z$ are all contained  in a single 2-face  $G$  
 having $y_{i+1}\vee z$ as 
 its sink.    
 Since $z\in G$ and  $z\not\in F$, we must have $G\ne F$.   

  Since $F$ and $G$ are distinct  2-faces, they  intersect in a proper face 
    which is  at most an edge.   Thus,  they  share at most two vertices.   
 $F$ and $G$ do share $y_i$ and $y_{i+1}$, hence cannot share any further vertices.  
 Since 
 $y_{i+1}\vee z \in G$  
  for $y_{i+1}\vee z \not\in \{ y_i,y_{i+1}\} $,  
 it follows   that 
 $y_{i+1}\vee z \not \in F$.  
 Similarly  the elements of $G\cap (y_{i+1},y_{i+1}\vee z]$ are not in $F$, so in particular the element  $z^+$   covering $y_{i+1}$ in   $G$ is not in   $F$. 
  But we have a directed path from 
 $z^+$ to $y_{i+1}\vee z$ by virtue of  $z^+$ being  in the 2-face $G$ with $y_{i+1}\vee z$ 
  as its sink.  Thus,   $z^+ \le_L y_{i+1}\vee z$.  
 We also have $y_{i+1}\vee z \le_L  psj(x,y)$ because  $psj(x,y)$ is   an upper bound for  $y_{i+1}$ and $z$.  
 Combining inequalities yields $z^+ \le_L psj(x,y)$.   Thus, we  
 have  a directed path from $z^+\not\in F$ to a vertex $psj(w,y) \in F$. 
Since we also have a directed edge from $y_{i+1}$ to $z^+$,  this  directed edge and this directed path 
together  
exhibit  the existence of a directed path exiting $F$ at $y_{i+1}$, going through $z^+\not\in F$, and later revisiting  $F$.  This contradicts 
our  choice of $y_i$ with $i$ as large as possible.
\end{proof}



\begin{thm}\label{pseudo=join}
Let $P$ be a simple polytope and let $\bc $ be  a generic cost vector  such that  $G(P,\bc )$  is the Hasse diagram of a  lattice $L$.  
Given any   $u\in L$ and any  $a_1,\dots ,a_j\in L$  all covering $u$ in $L$, $psj(a_1,\dots ,a_j) = a_1\vee \cdots \vee a_j$.  Dually, 
for  any $v\in L$ and any $c_1,\dots ,c_j\in L$  all covered by $v$ in $L$, $psm(c_1,\dots ,c_j) = c_1\wedge \cdots \wedge c_j$.
\end{thm}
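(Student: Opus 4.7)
The plan is to induct on $j$. The base case $j=2$ is exactly Theorem~\ref{2-atom-base-case}: the smallest face containing $u, a_1, a_2$ is a $2$-face because $P$ is simple, and Theorem~\ref{2-atom-base-case} then gives $psj(a_1,a_2) = a_1 \vee a_2$. The pseudo-meet assertion of Theorem~\ref{pseudo=join} will follow from the pseudo-join assertion by applying the pseudo-join result to $P$ with cost vector $-\bc$ (which satisfies the same hypotheses and whose Hasse diagram is that of $L^*$), so I focus only on pseudo-join.

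For the inductive step with $j \geq 3$, fix $u$ and atoms $a_1,\dots,a_j$ covering $u$; let $F$ denote the smallest face of $P$ containing them, which is a $j$-face since $P$ is simple, and let $w$ be its sink. For each $i$, the inductive hypothesis applied to the size-$(j-1)$ subset $\{a_k : k\neq i\}$ gives $v_i := \bigvee_{k\neq i} a_k = psj(\{a_k: k\neq i\})$, realized as the sink of the facet $F_i$ of $F$ through $u$ omitting $a_i$; applied to size $j-2$, it gives $v_{ii'} := \bigvee_{k\neq i,i'}a_k = psj(\{a_k:k\neq i,i'\})$ as the sink of $F_i \cap F_{i'}$. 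All these $v_i$ and $v_{ii'}$ therefore lie in $F$. The inequality $w \geq \bigvee_i a_i$ is immediate from $w$ being the sink of $F$; the work is in establishing the reverse inequality.

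To do this I would mimic the extremal setup of the proof of Theorem~\ref{2-atom-base-case}: assume a counterexample exists and choose one for which $d(F,u)$ is maximal. Then zoom in on $v_{12} \in F$: since $v_{12}$ is the sink of a codimension-two face $F_1\cap F_2$ of the simple $j$-polytope $F$, the vertex $v_{12}$ has precisely $j-2$ inward edges in $F_1\cap F_2$ and exactly two further edges in $F$, one lying in $F_1\setminus F_2$ and one in $F_2\setminus F_1$. The Hasse-diagram hypothesis combined with Lemma~\ref{not-same-source-sink} forces both of these remaining edges to be oriented outward from $v_{12}$ (otherwise $v_{12}$ would have to coincide with $v_1$ or $v_2$, which is ruled out once $j\geq 3$ since we may assume the $a_i$'s are nonredundant). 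Call the other endpoints $y_2 \in F_1\setminus F_2$ and $y_1 \in F_2\setminus F_1$; these cover $v_{12}$ in $L$ and are therefore atoms of $[v_{12},\cdot]$. Applying Theorem~\ref{2-atom-base-case} at $v_{12}$ then identifies $y_1 \vee y_2 = psj(y_1, y_2)$ with the sink $w^*$ of the unique $2$-face $G^*$ of $F$ generated by $v_{12}, y_1, y_2$.

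It remains to propagate this $2$-atom identity at $v_{12}$ up to the full $j$-atom identity at $u$, namely to show $w^* = w$ and $y_1 \vee y_2 = \bigvee_i a_i$, thereby contradicting our assumed counterexample. The intuition is clear: the outward edge $v_{12}\to y_2$ initiates the unique directed path in $F_1$ from $v_{12}$ up through the polygon boundary to $v_1$, so $v_1 \leq y_2 \vee v_{12}$, and symmetrically $v_2 \leq y_1 \vee v_{12}$; combining, $\bigvee_i a_i = v_1 \vee v_2 \leq y_1 \vee y_2 = w^*$, while a maximality-driven second application of Theorem~\ref{2-atom-base-case} (together with Lemma~\ref{not-same-source-sink} used to block any directed excursion that would leave and re-enter the relevant subfaces) identifies $w^*$ with $w$. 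The main obstacle I anticipate is precisely this last step: rigorously lifting the equality from the $2$-face $G^*$ to the ambient $j$-face $F$, since a priori $G^*$ is only a $2$-face of $F$ and its sink need not be the global sink of $F$. Carrying this out will require a careful chain of applications of Lemma~\ref{not-same-source-sink} to codimension-one faces inside $F$, closely paralleling (one dimension higher) the delicate path-chasing in the final case of the proof of Theorem~\ref{2-atom-base-case}, and using the extremality of $d(F,u)$ to freely invoke both the inductive hypothesis and Theorem~\ref{2-atom-base-case} at every vertex strictly above $u$.
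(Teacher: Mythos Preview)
Your inductive setup and the $j=2$ base case match the paper's, as does the dualization for pseudo-meets. But the inductive step you propose diverges from the paper's argument and, as written, has a genuine gap.

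The paper does \emph{not} run an extremal $d(F,u)$-maximal argument for $j\ge 3$; that device appears only in the proof of the $j=2$ base case (Theorem~\ref{2-atom-base-case}). For $j\ge 3$ the paper instead proves $psj(a_1,\dots,a_j)\le J(a_1,\dots,a_j)$ by an upward sweep through \emph{all} vertices of $F=F_{\{a_1,\dots,a_j\}}$: one shows, by a sub-induction on the partial order restricted to $F$, that every $x\in F$ satisfies $x\le J(a_1,\dots,a_j)$, together with the auxiliary statement that every $y\in F$ covering $x$ also satisfies $y\le J$. At each $x$ one counts the number $i$ of edges of $F$ directed into $x$; for $2\le i\le j-1$ the simple-polytope hypothesis produces an $i$-face with sink $x$, so the inductive hypothesis on $j$ identifies $x$ as a join of fewer than $j$ atoms already known to lie below $J$; the $i=1$ case is handled by manufacturing a $2$-face through $x$ and invoking Theorem~\ref{2-atom-base-case}; the final $i=j$ case (namely $x=psj$) follows because $x$ then covers two elements already bounded by $J$.

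Your route, focusing everything at the single vertex $v_{12}$ and the $2$-face $G^*$ it generates, runs into two concrete problems. First, the inequality ``$v_1 \le y_2\vee v_{12}$'' is backwards: since $v_{12}\prec y_2$ you have $y_2\vee v_{12}=y_2$, while $v_1$ is the sink of $F_1\ni y_2$, so in fact $y_2\le v_1$. The phrase ``polygon boundary'' for $F_1$ is only apt when $j=3$; for $j\ge 4$ the facet $F_1$ is $(j-1)$-dimensional and there is no unique directed path in $F_1$ from $v_{12}$ to $v_1$. Second, you yourself flag the identification $w^*=w$ as ``the main obstacle'' and leave it to an unspecified chain of applications of Lemma~\ref{not-same-source-sink}. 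Nothing in the tools you cite forces the sink of one particular $2$-face inside $F$ to coincide with the sink of the ambient $j$-face $F$, and the paper never attempts any such identification; its upward-sweep argument sidesteps this entirely by never comparing sinks of subfaces directly with the sink of $F$.
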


\begin{proof}
The $j=1$ case holds tautologically.  The $j=2$ case is proven  in Theorem 
~\ref{2-atom-base-case}.  We will rely on these  cases to prove the result for any fixed  $j>2$ (though the proof  for $j>2$ is not by induction on $j$). 
 Given $u\in L$ and $ \{ a_1,\dots ,a_j \} $ a set of elements which all cover  $u$ in $L$, recall our notation 
 $J(a_1,\dots ,a_j)$ for   $a_1 \vee \cdots \vee a_j$.  
 Let $F_{ \{ a_1,\dots ,a_j\} }$ be the unique smallest face containing  all of the elements in  
 $\{ a_1,\dots ,a_j \}  \cup \{ u\} $.  Thus, $u$ is the source of $F_{\{ a_1,\dots ,a_j\} }$.
 Let $\le_L $ denote the order relation on the lattice $L$.

  First we  will  prove $psj(a_1, \dots ,a_j)\le_L J(a_1,\dots ,a_j)$.
 To accomplish this, 
 we will  prove for each $x\in F_{\{ a_1,\dots ,a_j\} }$ other than $psj(a_1,\dots ,a_j)$ 
  that each $y\in F_{\{ a_1,\dots ,a_j \} }$ that covers $x$ satisfies $y\le_L J(a_1,\dots ,a_j)$, assuming  inductively that 
  we have already proven this same 
  statement for every $x'\in F_{\{ a_1,\dots ,a_j\} }$ such that $x' <_L x$.  
  To get started, we first   check  the base case of this  inductive statement, namely the 
  case of $x=u$.  The definition of the 
  join operation directly yields $a_i\le_L J(a_1,\dots ,a_j)$ for $i=1,\dots ,j$.  This in turn implies $u\le_L J(a_1,\dots ,a_j)$ since $u\le_L a_i\le_L J(a_1,\dots ,a_j)$ for 
 $i=1,2,\dots ,j$.    This completes the  base case.

       \begin{figure}[h]\label{higher-picture}
        \begin{picture}(325,292)(-110,170)

\curve(28, 215, 65, 185) 
\put(70, 180){$u'$}

\curve(80, 185, 110, 215) 

 \put(18,220){$a$}
\put(110, 220){$a'$}
\dashline{3}(25,228)(107,405) 

\dashline{3}(107,227)(-50,445)
 \put(-68,450){$J(a_1,\dots ,a_j)$}
 \dashline{3}(-58,445)(16,228)  
 
 \dashline{8}(-40,445)(103,413) 
 
 \curve(24,379,34,402) 
 \curve(29,377,39,399) 
\curve(28,405,41,409) 
\curve(46,398,41,409) 

  \put(77,303){$F(x,y,w)$}
\dashline{3}(118,230)(135,255) 
   \put(135,260){$w$}

   \curve(138, 267, 140, 300) 
   \put(137, 305){$x$}

  \curve(140, 313, 137, 345) 
   \put(133, 350){$y$}

  \put(107,410){$psj(a,a') = a\vee a'$} 
\dashline{3}(120,405)(133,360) 






          
   
   

   
   

        \end{picture}
        \caption{Inductive step for  Theorem ~\ref{pseudo=join}}   
\end{figure}

 Now to the inductive step where we  assume $x>u$. See Figure 2.   Consider any $y\in F_{\{ a_1,\dots ,a_j\} }$ which covers $x$.  
There must also exist  some $w\in F_{\{ a_1,\dots ,a_j\} }$ covered by $x$, since  we already have   
$x>u$.  Consider the unique 2-face $F(x,y,w)$ containing
 $x,y$ and $w$, a face which is guaranteed to exist because $P$ is simple.  
 Let $u'$ be its source.  Since $u' <_L x$, our inductive hypothesis allows us to  assume  
  $u'\le_L J(a_1,\dots ,a_j)$ and that 
 each  element of $F_{\{ a_1,\dots ,a_j\} }$ covering $u'$ is also bounded above by $J(a_1,\dots ,a_j)$.  In particular, we  may assume 
  that the two elements $a,a'$  of 
 $F(x,y,w)$ covering $u'$ 
 are  bounded 
 above by $J(a_1,\dots ,a_j)$.  But $psj(a,a') = a\vee a'$  
 by Theorem ~\ref{2-atom-base-case}.  Thus, $psj(a,a')  \le_L J(a_1,\dots ,a_j)$ due to $J(a_1,\dots ,a_j)$ already being  
 an upper bound for $a$ and $a'$ and hence for $a\vee a'$ as well.  
 The inequality $psj(a,a')\le_L J(a_1,\dots ,a_i)$ implies that every element of $F(x,y,w)$ is  bounded above by
 $J(a_1,\dots ,a_j)$, so in particular that   $y\le_L J(a_1,\dots ,a_j)$.   
 This completes the  inductive step, hence the proof  that 
 $psj (a_1,\dots ,a_j)  \le_L J(a_1,\dots ,a_j) $.  

We deduce the inequality 
$J(a_1,\dots ,a_j)\le_L  psj(a_1,\dots ,a_j)$ from the fact that    $psj (a_1,\dots ,a_j)$ is  an upper bound
for  all of the elements $a_1,\dots ,a_j$.
These  weak  inequalities $$psj(a_1,\dots ,a_j) \le J(a_1,\dots ,a_j) \le psj(a_1,\dots ,a_j)$$ combine to yield
$ psj(a_1,\dots ,a_j) = J(a_1,\dots ,a_j)$, as desired.

This also allows us to deduce   the  desired dual statement 
as follows.  
We negate the cost vector and observe that $G(P,-\bc )$ is the Hasse diagram of the dual poset to the poset we get from $G(P,\bc )$.  
Poset duality reverses the 
roles of the  meet and join operations, reducing the desired statement about meets to the statement we have already proven about joins.
\end{proof}

A useful  immediate consequence of Theorem ~\ref{pseudo=join}
is as follows.

\begin{cor}
Let $P$ be a simple polytope and let $\bc $ be a generic cost vector such that $G(P,\bc )$ is the Hasse diagram of a lattice $L$.  If 
$a_1,\dots ,a_j$ all cover  an element  $u\in L$, then  $a_1\vee \cdots \vee a_j$ is in the unique  smallest face of $P$ containing $a_1,a_2,\dots ,a_j,u$.  
Dually, for  $v\in L$  and $c_1\dots ,c_j$ all covered by $v$,  $c_1 \wedge\cdots \wedge c_j$ is in the unique 
smallest face of $P$ containing $c_1,c_2,\dots ,c_j,v$.
\end{cor}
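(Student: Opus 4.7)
The corollary is labeled an immediate consequence of Theorem \ref{pseudo=join}, but making it precise requires the observation that the unique smallest face of $P$ containing $a_1,\dots,a_j$ actually coincides with the face $F_{\{a_1,\dots,a_j\}}$ of Definition \ref{pseudo-def} (the unique smallest face containing $u$ \emph{together with} all the $a_i$). My plan is to verify this identification, after which the conclusion is immediate.

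First I would establish a purely lattice-theoretic lemma: when $j \ge 2$ and $a_1,\dots,a_j$ are distinct elements all covering $u$ in $L$, the meet $a_1 \wedge \cdots \wedge a_j$ equals $u$. Indeed, any common lower bound $m$ satisfies $m \le a_i$ for each $i$; since $u \prec a_i$, this forces $m \le u$ or $m = a_i$, and the latter cannot hold for two distinct $a_i$'s simultaneously, so $m \le u$. Combined with $u \le m$, we get equality. The $j=1$ case is trivial.

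Next I would analyze the unique smallest face $F'$ of $P$ containing $a_1,\dots,a_j$. Since $F_{\{a_1,\dots,a_j\}}$ is one such face, $F' \subseteq F_{\{a_1,\dots,a_j\}}$. Let $s$ be the source of $F'$ with respect to $\bc$. Because every vertex of $F'$ is reachable from $s$ by a directed path inside $F'$, we have $s \le a_i$ in $L$ for every $i$, so $s \le a_1 \wedge \cdots \wedge a_j = u$. On the other hand, $s$ and $u$ both lie in $F_{\{a_1,\dots,a_j\}}$, whose source is $u$, so $\bc \cdot u \le \bc \cdot s$. But $s \le u$ in $L$ with $s \neq u$ would entail $\bc \cdot s < \bc \cdot u$, contradicting the previous inequality. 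Hence $s = u$, and in particular $u \in F'$. Since $F'$ then contains the $j$ edges $e_{u,a_i}$ at $u$, simplicity of $P$ forces $F'$ to be the unique $j$-dimensional face determined by these edges, namely $F_{\{a_1,\dots,a_j\}}$.

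With $F' = F_{\{a_1,\dots,a_j\}}$ in hand, Theorem \ref{pseudo=join} yields $a_1 \vee \cdots \vee a_j = psj(a_1,\dots,a_j)$, and by definition this pseudo-join is the sink of $F_{\{a_1,\dots,a_j\}} = F'$, hence lies in $F'$. The dual statement for coatoms covered by $v$ follows by replacing $\bc$ with $-\bc$ and applying the same argument to the dual lattice. The main obstacle is the geometric identification of the two candidate smallest faces; once that is carried out, everything else is bookkeeping on top of Theorem \ref{pseudo=join}.
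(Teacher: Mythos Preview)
Your argument is correct and actually more careful than the paper, which simply declares the corollary an ``immediate consequence'' of Theorem~\ref{pseudo=join} without further comment.  You correctly flag that the corollary speaks of the smallest face containing $a_1,\dots,a_j$, whereas Definition~\ref{pseudo-def} builds the pseudo-join from the smallest face containing $u$ \emph{together with} the $a_i$; identifying these two faces is a genuine (if small) gap that the paper glosses over.  Your method---showing that the source $s$ of the smaller face $F'$ is a lower bound for all the $a_i$, hence $s\le a_1\wedge\cdots\wedge a_j=u$, and then using that $u$ is the source of $F_{\{a_1,\dots,a_j\}}\supseteq F'$ to force $s=u$---is clean and correct.

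One small point worth tightening: your face identification $F'=F_{\{a_1,\dots,a_j\}}$ only holds for $j\ge 2$, since for $j=1$ the smallest face containing $a_1$ is the vertex $\{a_1\}$ while $F_{\{a_1\}}$ is the edge $e_{u,a_1}$.  Your sentence ``The $j=1$ case is trivial'' presumably refers to the corollary itself (since $a_1\in\{a_1\}$ tautologically), not to the meet lemma; it would be clearer to state explicitly that the subsequent face analysis is carried out under the standing assumption $j\ge 2$.
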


 Next we deduce  a further 
property of pseudo-joins (and pseudo-meets)  that will be helpful for understanding the topological structure of  posets whose Hasse diagram is 
$G(P,\bc )$ for a simple polytope $P$ and a generic cost vector $\bc $. 

 \begin{lem}\label{new-pseudo-joins}
Let $P$ be  a simple polytope  and let $\bf{c}$ be a generic cost vector  such that $G(P,\bf{c})$  is the  Hasse diagram of a poset $Q$.   If  $S$ and $ T$ are distinct sets of atoms   
in  $Q$,  
 then  psj(S) $\ne $ psj(T). 
 If $S$ and $T$ are distinct sets of coatoms in $Q$, then $psm(S)\ne psm(T)$.    If  $Q$ is 
  a lattice, then the same pair of  statements holds for 
  each interval $[u,v] $ in $Q$.
\end{lem}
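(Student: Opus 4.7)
The plan is to suppose $psj(S) = psj(T) = w$ for distinct sets of atoms $S \neq T$ and derive a contradiction; the pseudo-meet statements then follow by duality upon negating $\bc$, which swaps sources with sinks and the roles of atoms/joins with coatoms/meets.

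For the first assertion (where $L$ is only a poset), I would exploit that both $F_S$ and $F_T$ contain the global source $\hat{0}$ as well as their common sink $w$. Given any $a\in S$, I would take any directed path inside $F_S$ from $a$ to the sink $w$ and prepend the edge $\hat{0}\to a$, obtaining a directed path from $\hat{0}$ to $w$. Applying Corollary~\ref{source-face-sink-face} to $F_T$ forces this path to remain inside $F_T$, so in particular $a\in F_T$. Since $P$ is simple and $F_T$ has dimension $|T|$, the vertex $\hat{0}$ has exactly $|T|$ neighbors in $F_T$, namely the elements of $T$; hence the atoms of $L$ lying in $F_T$ are precisely $T$, forcing $a\in T$. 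This gives $S\subseteq T$, and symmetrically $T\subseteq S$, contradicting $S\neq T$.

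For the interval version with $L$ a lattice, the source argument breaks since $u$ need not be $\hat{0}$ and $G(P,\bc)$ may have edges pointing into $u$ or into $F_T$ from outside. I would instead invoke Theorem~\ref{pseudo=join} to translate $psj(S)=psj(T)=w$ into $\bigvee S=\bigvee T=w$, then pick $a\in S\setminus T$. Because $a\le w$, we have $\bigvee(T\cup\{a\})=w$, and Theorem~\ref{pseudo=join} applied again gives $psj(T\cup\{a\})=w$. Setting $F=F_T$ and $G=F_{T\cup\{a\}}$, one obtains $F\subsetneq G$ with $\dim G=\dim F+1$, both having source $u$ and sink $w$. I would then apply Lemma~\ref{not-same-source-sink} to the pair $F\subseteq G$: a monotone directed path inside $F$ from $u$ to $w$ exists; the outward edge $u\to a$ lies in $G\setminus F$ because the neighbors of $u$ inside $F$ are exactly $T$; and the sink $w$ of the $(|T|+1)$-dimensional simple face $G$ has $|T|+1$ inward edges in $G$, precisely $|T|$ of which lie in $F$, so at least one inward edge to $w$ comes from $G\setminus F$. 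Lemma~\ref{not-same-source-sink} then delivers the required contradiction.

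The main obstacle is the transition from the global case to intervals, since Corollary~\ref{source-face-sink-face} is no longer available once $u$ is permitted to receive incoming edges from outside $F_T$. Theorem~\ref{pseudo=join} overcomes this by allowing us to enlarge $T$ to $T\cup\{a\}$ without altering the pseudo-join, and Lemma~\ref{not-same-source-sink} then supplies the needed geometric obstruction in codimension one. A minor bookkeeping point to verify is that in a simple polytope the face $F_T$ really has dimension $|T|$ with source $u$, so that its atoms-above-$u$ coincide with $T$; this is immediate from the vertex figure at $u$ being a simplex.
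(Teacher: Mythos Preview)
Your proof is correct. Both parts ultimately rest on Lemma~\ref{not-same-source-sink}, as does the paper's argument, but the routes differ.

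For the global (poset) case, the paper argues structurally on faces: it first treats $F_T\subsetneq F_S$ of codimension one via Lemma~\ref{not-same-source-sink}, then reduces higher codimension to codimension one through a chain of intermediate faces, and finally reduces arbitrary $F_S,F_{S'}$ to the nested case by passing to $F_S\cap F_{S'}=F_{S\cap S'}$. Your argument bypasses all of this reduction by applying Corollary~\ref{source-face-sink-face} to $F_T$ to force each $a\in S$ into $F_T$ and hence into $T$; this is shorter and more transparent.

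For the interval (lattice) case, the paper transports its global argument into the subposet $[u,v]\cap F_S$, invoking Theorem~\ref{pseudo=join} to justify that the relevant sinks stay inside $F_S$, and then reruns the codimension reduction. You instead use Theorem~\ref{pseudo=join} to manufacture a single codimension-one pair $F_T\subsetneq F_{T\cup\{a\}}$ with identical source and sink, and apply Lemma~\ref{not-same-source-sink} once. This is again more direct: it avoids the intermediate-face chain and the passage to $F_S\cap F_{S'}$, at the cost of needing the lattice hypothesis (via Theorem~\ref{pseudo=join}) already to set up the pair. Both approaches buy the same conclusion; yours trades the paper's face-lattice bookkeeping for a single clean invocation of the key lemma.
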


\begin{proof}
Given a collection $S$ of atoms in $Q$, once again let  $F_S$ be the smallest face of $P$ 
containing the vertices in  $S\cup \{  \hat{0} \} $.  Such a face  $F_S$ exists because $P$ is simple.  
Note that $S\ne T$ implies $F_S \ne F_T$ since each face 
$F_S$  is itself  simple with $dim (F_S) = |S|$ and with the neighboring vertices to 
$\hat{0}$ in $F_S$  being exactly the elements of $S$.    

Now to our claim about pseudo-joins.   
 First consider 
$F_T$ that is a codimension one face of a face $F_S$, with  both these  faces  including $\hat{0}$. 
Suppose also that both $F_S$ and $F_T$  have the same sink, denoted $v$.    Then there is an  edge directed from a vertex 
 $v_S \in F_S \setminus F_T$ to  $v \in F_T$, by virtue of $v$  being the sink of $F_S$ and having a neighbor in $F_S\setminus F_T$.  
 There is also an edge  directed from $\hat{0}$  to a vertex  $v_S' \in F_S \setminus F_T$, by virtue of $\hat{0}$ being the  source  of  $F_S$ and having a neighbor in $F_S\setminus F_T$.  
There is also a directed path from  $\hat{0}$ to $v$ that stays within $F_T$  because  $\hat{0}$ and $v$ are the source and sink vertices  of $F_T$.
Thus, Lemma ~\ref{not-same-source-sink} applies in this case, giving a contradiction to such faces $F_S$ and $F_T$ having the same sink. 

Next we turn to the case of $F_T \subsetneq F_S$  with both faces  containing $\hat{0}$  with $F_T$ of codimension higher than one in $F_S$.  Again suppose
both $F_S$ and $F_T$ have the same sink.  We use the existence of an intermediate  face $F_{T'}$ with $F_T\subsetneq F_{T'}  \subsetneq F_S$ to reduce as follows  to the codimension one case above.  Since  $F_T$ and $F_S$ have the same source and sink,  $F_{T'}$ must also have this same source and sink, enabling us to reduce to the lower codimension case of $F_T\subsetneq F_{T'}$.  Doing this repeatedly yields the codimension one case above, allowing us to rule out this case as well.
 
 Next we reduce the case of any two faces $F_S$ and $F_{S'} $ both having source  $\hat{0} $  and both  having the same sink to the  
 cases already ruled out above. 
 Since  the face $F_S \cap F_{S'}$ must also  
 contain $\hat{0}$ as well as containing the  common sink for $F_S$ and $F_{S'}$, this  implies $F_S\cap F_{S'}$ will  
 also have this same  sink.    But $F_S\cap F_{S'}\subsetneq F_S$, allowing us to use the above cases to rule out this case.
  This completes the proof of the statement about distinct atoms in $L$.
  
 Turning now to arbitrary intervals $[u,v]$  of $Q$, we  now assume $Q$ is a lattice.  The fact that $v$ is an upper bound for the set 
 $S$  of  atoms of 
 $[u,v]$  implies  that the join of  the elements of $S$ is contained in $[u,v]$.  Theorem 
 ~\ref{pseudo=join}  ensures that this join equals the pseudo-join of the  elements of $S$, hence that the pseudo-join is also in the interval.  
 These observations  allow the   
 argument above to be applied 
  to the  interval $[u,v]$.
 
 The same proof applied to the dual poset yields  the desired analogous  
 statements for pseudo-meets of coatoms in all of $Q$ as well as  in any interval $[u,v]$.
\end{proof}

\begin{cor}\label{implies-Boolean}
Let $P$ be a simple polytope and let  $\bc $ be  a generic cost vector such that $G(P,\bc )$ is the Hasse diagram of a poset  $Q$.
Then  the subposet of $Q$ consisting of all 
 pseudo-joins of atoms (resp. pseudo-meets of  coatoms)  
 is a Boolean lattice $B_{|A|}$ for $A$ the set of atoms (resp. coatoms) of $Q$.  If $Q$ is a lattice, then this same property holds for each 
 interval $[u,v]$ in $Q$.  
\end{cor}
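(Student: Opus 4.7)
The plan is to show that the map $S \mapsto psj(S)$ from subsets of the atom set $A$ into $L$ is a poset isomorphism onto its image. Injectivity of this map was already established in Lemma \ref{new-pseudo-joins}, so the remaining task is to prove the order-theoretic equivalence $S \subseteq T \iff psj(S) \le psj(T)$.

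For the forward direction, I would exploit the minimality built into the definition of $F_S$. Since $F_T$ contains $\hat{0}$ together with all elements of $T \supseteq S$, minimality of $F_S$ yields $F_S \subseteq F_T$. Hence $psj(S) \in F_T$, and since $psj(T)$ is the sink of $F_T$, there is a directed path from $psj(S)$ to $psj(T)$ inside $F_T$, giving $psj(S) \le psj(T)$ in $L$. For the reverse direction, fix any atom $a \in S$. Concatenating the edge $\hat{0} \to a$ with a directed path realizing $a \le psj(S) \le psj(T)$ produces a directed path from $\hat{0}$ to $psj(T)$, both of whose endpoints lie in $F_T$. Corollary \ref{source-face-sink-face}, applied to the face $F_T$ which contains $\hat{0}$, then forces the entire path to stay inside $F_T$; in particular $a \in F_T$. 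Because $P$ is simple and $F_T$ is $|T|$-dimensional with source $\hat{0}$, the $|T|$ edges of $F_T$ incident to $\hat{0}$ are precisely the atoms in $T$, so $a \in T$, yielding $S \subseteq T$.

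The statement for pseudo-meets of coatoms is then immediate by dualizing the entire argument, i.e., by negating $\bc$ and exchanging the roles of meet and join. For an interval $[u,v]$ in a lattice $L$, the same proof adapts with $u$ (resp.\ $v$) playing the role of $\hat{0}$ (resp.\ $\hat{1}$): Theorem \ref{pseudo=join} guarantees that pseudo-joins of atoms of $[u,v]$ coincide with their lattice joins and are therefore bounded above by $v$, so the relevant face $F_S$ is contained in $[u, psj(S)] \subseteq [u,v]$ and supports all of the arguments above verbatim.

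The main obstacle will be the reverse implication $psj(S) \le psj(T) \Rightarrow S \subseteq T$. Without Corollary \ref{source-face-sink-face} one has no way to force the path from $a$ to $psj(T)$ to remain inside $F_T$, and without the simplicity of $P$ one could not conclude that the edges of $F_T$ emanating from $\hat{0}$ correspond bijectively with the atoms in $T$. Both ingredients are essential here; together they convert the purely set-theoretic bijection of Lemma \ref{new-pseudo-joins} into a genuine order-isomorphism with $B_{|A|}$.
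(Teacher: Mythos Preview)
Your argument for the first statement---the global case with atoms of all of $L$---is correct and more detailed than what the paper offers (the paper states this corollary with no proof).  The forward implication via face containment and the reverse implication via Corollary~\ref{source-face-sink-face} both work as written.

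The gap is in your treatment of the interval case.  You claim the argument goes through ``verbatim'' once Theorem~\ref{pseudo=join} bounds $psj(S)$ above by $v$.  But your reverse implication rests on Corollary~\ref{source-face-sink-face}, which applies only to faces containing the $\hat{0}$ (or $\hat{1}$) of the \emph{entire} poset $L$; the paper explicitly warns, just after Corollary~\ref{join-inside}, that the proof of Lemma~\ref{source-non-revisit} does not extend to general $u \ne \hat{0}$, because such $u$ has edges of $G(P,\bc)$ directed toward it.  So for a face $F_T$ with source $u \ne \hat{0}$ you have no result forcing the path $u \to a \to \cdots \to psj(T)$ to remain inside $F_T$.

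The fix uses Theorem~\ref{pseudo=join} more substantively than you do.  In the lattice case, pseudo-joins of atoms of $[u,v]$ coincide with lattice joins, so $psj(S) \le psj(T)$ reads $J(S) \le J(T)$.  Then $J(S \cup T) = J(S) \vee J(T) = J(T)$, hence $psj(S \cup T) = psj(T)$, and now Lemma~\ref{new-pseudo-joins} (whose interval version the paper has already established) forces $S \cup T = T$, i.e.\ $S \subseteq T$.  This replaces the path argument entirely in the interval case and is presumably what the paper intends when it states the corollary without proof.
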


Now to a topological consequence of  the above results.  

\begin{thm}\label{simple-polytopes}
Let $P$ be a simple polytope and let $\bf{c}$ be  a generic cost vector  such that the directed graph 
$G(P,\bf{c})$ is the Hasse diagram of a 
lattice  $Q$.  Then each nonempty open interval $(u,v)$ has order complex that is  
homotopy equivalent to a ball or a sphere.  Thus,  $\mu_Q 
(u,v)$ equals   $0, 1, $ or $-1$ for each $u\le v$.
\end{thm}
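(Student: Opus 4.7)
The plan is to produce, for each open interval $(u,v)$ of $L$, a dual closure map whose image is a subposet of a Boolean lattice whose order complex is visibly a sphere or contractible. Fix $u < v$ in $L$ and let $A$ denote the set of atoms of the closed interval $[u,v]$. For $x \in (u,v)$, set $A_x = \{a \in A : a \le x\}$ and define $f \colon (u,v) \to (u,v)$ by $f(x) = psj(A_x)$. Since $x > u$ forces some atom $a$ of $[u,v]$ to satisfy $a \le x$, we have $A_x \neq \emptyset$ and so $f(x) > u$; and $f(x) \le x < v$ keeps $f(x)$ below $v$, so $f$ lands in $(u,v)$. Theorem \ref{pseudo=join} identifies $f(x)$ with the honest join $\bigvee A_x$ in $L$, from which the inequality $f(x) \le x$ and the monotonicity of $f$ follow directly. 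The idempotence $f \circ f = f$ reduces, via the same identification, to the equality $A_{f(x)} = A_x$, which holds because $f(x) \le x$ gives $A_{f(x)} \subseteq A_x$ while the reverse containment is immediate from $f(x) \ge a$ for every $a \in A_x$. Thus $f$ is a dual closure map, and Remark \ref{our-dual-closure-map} yields $\Delta((u,v)) \simeq \Delta(\mathrm{im}(f))$.

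Next I would identify $\mathrm{im}(f)$ with an explicit subposet of a Boolean lattice. By Corollary \ref{implies-Boolean} (applied to the interval $[u,v]$, legitimate since $L$ is a lattice), the collection $B := \{psj(S) : S \subseteq A\}$ forms a Boolean lattice $B_{|A|}$ with bottom $u$ and top $psj(A)$. I claim that for every nonempty $S \subseteq A$ with $psj(S) \in (u,v)$, the element $psj(S)$ is a fixed point of $f$, i.e.\ $A_{psj(S)} = S$: if $T := A_{psj(S)}$ strictly contained $S$ as a proper subset on either side, then $T \supseteq S$ forces $psj(T) \ge psj(S)$, while the fact that every element of $T$ is bounded by $psj(S)$ forces $psj(T) = \bigvee T \le psj(S)$, giving $psj(S) = psj(T)$ with $S \neq T$, contradicting Lemma \ref{new-pseudo-joins}. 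It follows that $\mathrm{im}(f) = \{psj(S) : \emptyset \neq S \subseteq A\} \cap (u,v)$, a subposet of $B$ equal either to $B_{|A|} \setminus \{\hat{0}\}$ or to $B_{|A|} \setminus \{\hat{0}, \hat{1}\}$ according to whether $v \notin B$ or $v = psj(A) \in B$.

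Splitting into these two cases completes the proof. If $v = psj(A)$, then $\mathrm{im}(f)$ is the proper part of $B_{|A|}$, whose order complex is a sphere $S^{|A|-2}$, giving $\mu_L(u,v) = (-1)^{|A|}$. If $v > psj(A)$, then $\mathrm{im}(f) = B_{|A|} \setminus \{\hat{0}\}$ has a maximum element $psj(A)$ and is therefore a cone; its order complex is contractible and homotopy equivalent to a point, which we interpret as a zero-dimensional ball, so $\mu_L(u,v) = 0$. In either case $\Delta((u,v))$ is homotopy equivalent to a ball or a sphere of some dimension, and $\mu_L(u,v) \in \{-1, 0, 1\}$ follows from $\mu_L(u,v) = \tilde{\chi}(\Delta(u,v))$.

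The main technical obstacle is the fixed-point identification $A_{psj(S)} = S$, which underpins the Boolean-lattice description of $\mathrm{im}(f)$ and is where Lemma \ref{new-pseudo-joins} is genuinely used; without this distinctness of pseudo-joins of atoms in the interval $[u,v]$, extraneous atoms could sneak into $A_{psj(S)}$ and the clean identification of $\mathrm{im}(f)$ with a truncated Boolean lattice would fail. Once this identification is in hand, the case split based on whether $v$ is itself a pseudo-join of atoms is essentially cosmetic: both outcomes are immediate from the well-known homotopy type of the proper part of a Boolean lattice.
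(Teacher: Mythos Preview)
Your proof is correct and follows essentially the same route as the paper's: both define the map sending $x\in(u,v)$ to the pseudo-join of the atoms of $[u,v]$ below $x$, invoke Theorem~\ref{pseudo=join} to identify pseudo-joins with joins (so that $f$ is an order-preserving dual closure map), use Lemma~\ref{new-pseudo-joins}/Corollary~\ref{implies-Boolean} to recognize the image as a truncated Boolean lattice, and conclude via the Quillen Fiber Lemma. Your write-up is in fact more careful than the paper's on the verification of idempotence and the fixed-point identification $A_{psj(S)}=S$; note also that both your argument and the paper's implicitly use that $L$ is a lattice (as in the introduction's Theorem~\ref{thm-2}), even though the formal statement here reads ``poset.''
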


\begin{proof}
Given any  nonempty open interval $(u,v)$ in $Q$,  we will 
define a surjective poset map $f: (u,v) \rightarrow B$  for  $B = B_n \setminus \{ \hat{0},\hat{1}\} $ or 
$B = B_n \setminus \{ \hat{0} \} $ where  $B_n$ denotes the   
Boolean lattice  of subsets of $\{ 1,2,\dots ,n\}$ ordered by containment. 
For $n\ge 2$, $B_n \setminus \{ \hat{0}, \hat{1} \} $  
has   order complex homeomorphic to the sphere $S^{n-2}$, by virtue of being the barycentric subdivision of the 
boundary of the simplex $\Delta^{n-1}$, while $B_n \setminus \{ \hat{0} \} $ 
has contractible  order complex due to having a cone point at $\hat{1}$. 

Consider the map   $f$ 
sending  each $z\in (u,v)$ to the pseudo-join of the set of atoms  $a \in [u,z]$ 
Under our hypotheses, 
 the join of a set of atoms of an interval $[u,v]$  equals the pseudo-join of this same set of atoms, by Theorem ~\ref{pseudo=join}. 
  This  allows a reinterpretation of $f$ as sending $z$ to the join of the set of atoms of $[u,z]$, making  it clear 
  that $f$ is a poset map.
We   proved  that the pseudo-joins of distinct  sets of atoms in an interval $[u,v]$ are themselves distinct  in Lemma ~\ref{new-pseudo-joins}.   This yields  the desired Boolean lattice structure on the image of $f$ as  in Corollary ~\ref{implies-Boolean}.
 The fibers  of $f$
meet the contractibility requirement  for  the Quillen Fiber Lemma by virtue of each fiber  having a unique smallest  
element, hence a cone point in the order complex of the fiber. 

The claim that  $\mu_Q(u,v) \in \{ 0 ,\pm 1 \} $ for each $u\le v$  now 
 follows directly  from Hall's  well-known interpretation for  $\mu_Q (u,v)$  as 
 the reduced Euler characteristic  
 $\tilde{\chi}(\Delta (u,v))$ together with the fact that $\tilde{\chi }(K)=0$  for $K$ a ball and  $\tilde{\chi }(K) = (-1)^d$ for 
 $K$ a $d$-sphere.
\end{proof}



 \begin{rk}\label{almost-never-shellable}
 The posets  $Q$ considered in Theorem ~\ref{simple-polytopes}  typically have order complex which is not shellable. 
  To see this, note that a  shelling would force every 2-dimensional face $F$   in our polytope $P$ 
  to have one of the two directed paths from  the source to  the sink in 
  the  boundary of $F$ to have exactly two edges in it.  This follows from the fact that the boundary of $F$ with its source and sink removed would be disconnected which would force one of its two connected components to have order complex that is  0-dimensional in order  for a shelling of the order complex of $Q$  to be possible.
 %
  See e.g. \cite{BW} for background on shellability.  
\end{rk}

\section{Applications to various classes of polytopes and regular CW balls}\label{application-section}

Next  we apply our earlier results to several classes of polytopes (and regular CW balls).  
%
Rather than trying to give as
comprehensive a list  of applications as possible, 
we focus   
on some  important  families  where our  theory applies particularly naturally.  

\subsection{Applications to  3-polytopes and to  spindles}
\label{nonrevis-subsection}



\begin{thm}\label{case-of-3-polytopes}
Let $P$ be a simple polytope of dimension 3, and let $\bf{c}$ be a generic cost vector such that 
$G(P,\bc )$  is the Hasse diagram of a lattice $L$.  Then $G(P,\bc )$ has the nonrevisiting property.  That is, any directed path from $u$ to $v$ with $u,v$ both contained in a face $F$ must stay entirely in the face $F$.
\end{thm}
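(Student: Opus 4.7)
The first step is to observe that in a 3-polytope, the facets are exactly the 2-faces, so by Remark~\ref{facets-implies-faces} it suffices to prove the nonrevisiting property for 2-faces; nonrevisiting for 1-faces is free because $G(P,\bc)$ is a Hasse diagram (Lemma~\ref{1-d-revisiting}). So I would fix a 2-face $F$ and argue by contradiction: assume some directed path has endpoints in $F$ but visits a vertex outside $F$ along the way. By shortening to a minimal outside-excursion, I may take a strict revisiting $v_0 \to v_1 \to \cdots \to v_k$ with $v_0, v_k \in F$ and $v_1,\ldots,v_{k-1}\notin F$. Simplicity of $P$ gives each vertex of $F$ exactly one non-$F$ edge; the revisiting immediately forces this non-$F$ edge to be outgoing at $v_0$ and incoming at $v_k$.

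By Remark~\ref{2-face-source-sink}, the boundary of $F$ consists of two directed paths $p_1, p_2$ from the source $s_F$ of $F$ to the sink $t_F$ of $F$. I would then split into two cases. In Case A, $v_0$ and $v_k$ both lie on a common one of the two paths, say $p_i$; this also subsumes the degenerate situations $v_0=s_F$ or $v_k=t_F$, since such vertices lie on both paths. The subpath of $p_i$ from $v_0$ to $v_k$ is a directed path inside $F$, and together with the outgoing edge $v_0\to v_1$ and the incoming edge $v_{k-1}\to v_k$ it contradicts Lemma~\ref{not-same-source-sink} applied with $G = P$.

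The interesting case is Case B, where $v_0$ and $v_k$ lie on distinct paths and neither is $s_F$ nor $t_F$. Writing $p_1: s_F = u_0\to u_1\to\cdots\to u_a = t_F$ and $p_2: s_F = w_0\to w_1\to\cdots\to w_b = t_F$, I may take $v_0 = u_r$ with $1\le r\le a-1$ and $v_k = w_s$ with $1\le s\le b-1$. My plan is to force a contradiction via Theorem~\ref{pseudo=join}. Because $P$ is simple and 3-dimensional, $F$ is the unique 2-face at $s_F$ containing both edges $e_{s_F,u_1}$ and $e_{s_F,w_1}$, so the pseudo-join of the atoms $u_1,w_1$ of $[s_F,\hat{1}]$ is the sink $t_F$ of $F$; Theorem~\ref{pseudo=join} then identifies this with the lattice join, giving $u_1\vee w_1 = t_F$. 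Using $u_1\le u_r$ (along $p_1$) and $w_1\le w_s$ (along $p_2$) together with $u_r,w_s\le t_F$, I obtain $u_r\vee w_s \ge u_1\vee w_1 = t_F$ and hence $u_r\vee w_s = t_F$. But the revisiting path itself is a witness that $u_r\le w_s$ in $L$, so $u_r\vee w_s = w_s$, forcing $w_s = t_F$ and contradicting the interiority of $w_s$ on $p_2$. The main obstacle is precisely this Case B; once Theorem~\ref{pseudo=join} is brought in, the contradiction is short, whereas Case A and the preparatory reductions are essentially bookkeeping on top of Lemma~\ref{not-same-source-sink} and the 2-face structure of Remark~\ref{2-face-source-sink}.
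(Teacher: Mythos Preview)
Your proof is correct and follows essentially the same route as the paper's. The paper also reduces to 2-faces, invokes Lemma~\ref{not-same-source-sink} to exclude the situation where a directed $F$-path from the exit point to the re-entry point exists (your Case~A), and then in the remaining situation uses Theorem~\ref{2-atom-base-case} (the two-atom instance of your Theorem~\ref{pseudo=join}) to conclude that the re-entry vertex must equal the sink $t_F$, yielding the contradiction. Your explicit Case~A/Case~B split and your derivation of the contradiction via $u_r\vee w_s = w_s$ are cosmetic reorganizations of the same argument.
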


\begin{proof}
Acyclicity of $G(P,\bc )$ implies the nonrevisiting  property for 0-dimensional faces.  
The  fact that $G(P, \bc )$ is a Hasse diagram  by definition  implies the nonrevisiting property for 1-dimensional faces.  
Suppose there is a 2-dimensional face $F$ in $P$ and a directed path in $G(P,\bc )$ 
 that departs $F$ at $u\in F$ and re-enters $F$ at   $v\in F$.  
 By Lemma ~\ref{not-same-source-sink}, there cannot also be a directed path from $u$ to $v$ that stays entirely in  $F$, since $F$ is a 
 codimension one face of $P$.   Let 
 $a_1$ and $a_2$ be the vertices of $F$ that cover the source of $F$, chosen so that there is a directed path in $F$ from $a_1$ to $u$.
 We also have a directed path in $G(P,\bc )$ from $u$ to $v$; the fact that there is no directed path from $u$ to $v$ within $F$ implies in this case that there is a directed path within $F$ from $a_2$  to $v$.   The existence of these three directed paths implies  that $v$ is an upper bound for $a_1$ and $a_2$.  
 But we have just proven in Theorem ~\ref{2-atom-base-case} that the pseudo-join of $a_1$ and $a_2$ equals the join of $a_1$ and $a_2$.  Thus, we deduce that the sink of $F$, namely the pseudo-join of $a_1$ and $a_2$,  is less than or equal to $v$ in $L$.  This implies $v$ is the sink of $F$, since $v\in F$.  But this  contradicts the fact  that there is no directed path from $u$ to $v$ in $F$.  Thus, we have a contradiction to the existence of a 2-face $F$ and a path that starts and ends in $F$ 
 but does not stay entirely in $F$.
\end{proof}

Next  we 
turn to the class of polytopes producing all  
known  counterexamples  to the Hirsch Conjecture, namely spindles.
See Definition ~\ref{spindle-def} for a review  of  the notion of  a  spindle.  
 
 \begin{thm}\label{santos-okay}
Let $P$ be   a simple $d$-polytope with $n$ facets.  Suppose that $P$  is a spindle with  vertices $u$ and $v$ such that each facet of $P$ contains either $u$ or $v$.  Let $\bc $ be a generic cost vector such that $G(P,\bc )$ is the Hasse diagram of a poset having $\hat{0} = u$ and $\hat{1} = v$.  Then 
$G(P, \bc )$ satisfies the face nonrevisiting property, implying that every directed path from $u$ to $v$ has at most $n-d$ edges.  This gives  an upper bound of $n-d$ on the distance from $u$ to $v$.  
\end{thm}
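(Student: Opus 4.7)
The plan is to exploit the spindle condition together with the hypothesis that $u$ and $v$ are the source and sink of the orientation, so that \emph{every} facet of $P$ contains either the global source or the global sink of $G(P,\bc)$. This reduces the whole problem to a direct application of Corollary~\ref{source-face-sink-face}, which already establishes face nonrevisiting for any face that contains $\hat{0}$ or $\hat{1}$.

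More precisely, first I would observe that because $P$ is a spindle with distinguished vertices $u,v$, the definition forces each facet $F$ of $P$ to contain either $u = \hat{0}$ or $v = \hat{1}$. Applying Corollary~\ref{source-face-sink-face} to each such $F$ then tells us that no directed path in $G(P,\bc)$ can depart from $F$ and later revisit $F$. Thus every facet of $P$ has the nonrevisiting property, and by Remark~\ref{facets-implies-faces} this propagates to all faces of $P$, since every face is an intersection of facets. This yields the first half of the theorem with essentially no additional work.

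For the diameter bound, I would run the standard facet-counting argument that is usually invoked to deduce the Hirsch bound from the nonrevisiting property. Fix a directed path $u = v_0 \to v_1 \to \cdots \to v_k = v$, and for each $i$ let $A_i$ denote the set of facets containing $v_i$; since $P$ is simple, $|A_i| = d$ for every $i$, and consecutive vertices $v_{i-1}, v_i$ differ in exactly one facet, so exactly one facet is departed at each edge. The nonrevisiting property for facets forces the set of indices $\{i : v_i \in F\}$ to be a (possibly empty) contiguous interval for each facet $F$; in particular, once the path leaves $F$, it never returns. Consequently, no facet may be departed more than once along the path, so the total number of edges equals the number of distinct facets departed, which is at most the number of facets that do \emph{not} contain the endpoint $v$. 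Since $v$ lies in exactly $d$ facets, this number is at most $n - d$, yielding $k \le n - d$ as claimed.

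I do not expect a genuine obstacle in this argument: the serious technical input, namely Corollary~\ref{source-face-sink-face}, has already been proved, and the spindle hypothesis is tailor-made to let us cover all facets by two applications of it (to $\hat{0}$-faces and to $\hat{1}$-faces). The one place that needs mild care is the counting step, where one must use that $P$ is simple both to ensure each edge departs exactly one facet and to ensure the endpoint $v$ lies in exactly $d$ facets; without these two inputs the bound $n-d$ would weaken.
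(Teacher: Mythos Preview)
Your proposal is correct and follows essentially the same route as the paper's proof: both apply Corollary~\ref{source-face-sink-face} to every facet (using the spindle hypothesis that each facet contains $\hat 0$ or $\hat 1$) and then run the standard facet-counting argument to extract the $n-d$ bound. Your write-up is slightly more detailed in invoking Remark~\ref{facets-implies-faces} and in spelling out where simplicity is used, but the underlying argument is identical.
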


\begin{proof}
The definition of spindle ensures that  each facet $F$ of $P$ includes either $\hat{0}$ or 
$\hat{1}$.  But then Corollary ~\ref{source-face-sink-face}  
implies that there are no directed paths that depart from $F$ and later revisit $F$ for faces $F$ that include $\hat{0}$ or $\hat{1}$.  Since every facet in the spindle includes either $u$ or $v$ as a vertex, every facet has this nonrevisiting property.  But every  directed path from $u$ to $v$ departs a facet at each step.  
 Since there are only $n$ facets, and $v$ is incident to $d$ facets, there are at most $n-d$ facets that may be departed, hence at most $n-d$ steps in any directed path from $u$ to $v$.   This implies that  the distance from $u$ to $v$ cannot be greater than $n-d$.  
\end{proof}

This result has as a consequence 
that all of the  known counterexamples to the Hirsch Conjecture (to date)   
fail to meet  the hypotheses for Theorem ~\ref{santos-okay}:

\begin{cor}
Given any simple $d$-polytope with $n$ facets  that is a spindle with vertices $u$ and $v$ such that every facet includes either $u$ or $v$, if the distance from $u$ to $v$  is greater than $n-d$, then there does not exist any generic cost vector $\bc $ such that $G(P,\bc )$ is the Hasse diagram of a poset with $u$ as source and $v$ as sink.
\end{cor}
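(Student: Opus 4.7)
The plan is to deduce this corollary as an immediate contrapositive of Theorem~\ref{santos-okay}, so I do not anticipate any genuine obstacle beyond a bookkeeping check that the hypotheses line up. Suppose for contradiction that there does exist a generic cost vector $\bc$ such that $G(P,\bc)$ is the Hasse diagram of a poset having $u$ as source and $v$ as sink. I claim that the full hypotheses of Theorem~\ref{santos-okay} are then satisfied: $P$ is a simple $d$-polytope with $n$ facets; $P$ is a spindle with distinguished vertices $u,v$ (with every facet containing one of them); and with respect to $\bc$ we have $\hat{0}=u$ and $\hat{1}=v$, since the unique source (respectively, sink) of a directed graph that is the Hasse diagram of a finite poset is exactly the bottom (respectively, top) element of that poset.

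Applying Theorem~\ref{santos-okay} then forces every directed path from $u$ to $v$ in $G(P,\bc)$ to have at most $n-d$ edges. In particular, some shortest directed $u$-to-$v$ path has length at most $n-d$, and because any directed path in $G(P,\bc)$ is also an undirected edge path in the $1$-skeleton of $P$, the graph distance from $u$ to $v$ in the $1$-skeleton is at most $n-d$. This contradicts the standing hypothesis that the distance from $u$ to $v$ exceeds $n-d$, and so no such $\bc$ can exist.

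The only subtle point is the identification $\hat{0}=u$ and $\hat{1}=v$, which relies essentially on the qualifier ``with $u$ as source and $v$ as sink'' appearing in the corollary's statement: without this qualifier, a cost vector $\bc$ orienting $G(P,\bc)$ so that one of $u,v$ is an interior vertex of the Hasse diagram would fall outside the scope of Theorem~\ref{santos-okay}, and the argument above would not go through. The corollary is therefore a precise reformulation of Theorem~\ref{santos-okay} as a non-existence statement, which is exactly what is needed in order to rule out the possibility that the known spindle counterexamples to the Hirsch Conjecture (all of which realize $u$-to-$v$ distance strictly greater than $n-d$) could be realized as $G(P,\bc)$ for a cost vector of the form hypothesized in Conjecture~\ref{main-conj}.
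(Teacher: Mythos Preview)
Your proof is correct and matches the paper's approach exactly: the paper presents this corollary as a direct rephrasing (contrapositive) of Theorem~\ref{santos-okay} with no separate argument given, and your proof spells out precisely that contrapositive.
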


\subsection{Simple polytopes with Hasse diagrams of well-known lattices arising as their 1-skeleta}\label{example-section}

We now turn to  three well-known families of lattices, namely the
 weak order, the Tamari lattices, and the Cambrian lattices.  Their Hasse diagrams  will arise as $1$-skeleta of  the 
 permutahedra, the associahedra, and the generalized associahedra, respectively.  

 \begin{ex}\label{permutahedron-cost-vector}
 The permutahedron  $P_n$  is a simple polytope yielding weak order as follows.  Let 
 $(x_1,\dots ,x_n)$ be a point in $\reals^n$ with distinct coordinates, most typically chosen with 
 $x_i = i$ for $i=1,\dots ,n$.   Recall that 
 $$P_n(x_1, \ldots, x_n)={\rm conv}\{ (x_{\pi(1)}, \ldots, x_{\pi(n)})| \pi \in S_n\}$$ is the canonical $V$-representation for $P_n$.  Two of its  vertices $x_u=(x_{u(1)}, \ldots, x_{u(n)})$ and $x_v=(x_{v(1)}, \ldots, x_{v(n)})$ for $u, v, \in S_n$  are connected by an edge  if and only if $v=u s_i$ for some adjacent transposition $s_i=(i, i+1)$ acting on values. If starting from $x_e=(x_{1}, \ldots, x_{n})$, corresponding to the identity element $e$ of $S_n$ we orient the edges of $P_n(x_1, \ldots, x_n)$ from shorter towards longer permutations, then we obtain the weak order. Thus a cover relation  $u\prec v$, for $v=u s_i$ in weak order means that we  introduced a descent involving values  $i$ and $i+1$ which were in positions $k$ and $l$, $k<l$, in $u$, respectively. Then, taking the linear functional {\bf c}  $= (c_1,\dots ,c_n)\in \reals^n$ to be one with strictly descending  coordinates $c_1 > c_2 > \cdots > c_n$  we obtain that $\bc \cdot x_v- \bc  \cdot x_u=c_k(i+1)+c_l i -c_k i -c_l (i+1)=c_k-c_l>0$. 
 This verifies for each cover relation $u\prec v$ in weak order that $\bc\cdot x_u < \bc\cdot x_v $.  See also Example 3.3 in \cite{BKS}.
 \end{ex}

\begin{ex}\label{associahedron-cost-vector}
The associahedron $A_n$ is another example of a simple polytope with   a generic cost vector
$\bc $ yielding $G(A_n,\bc )$ as the Hasse diagram of a well known poset, namely  the Tamari lattice.  
Consider the presentation for the associahedron  introduced by Loday in \cite{Lo}. 
The vertices of the associahedron are indexed by the  unlabeled,  rooted planar, binary  trees with $n$ leaves and $n-1$ internal nodes (i.e. non-leaf vertices).
  We associate to  each such  tree $t$ the polytope vertex $M(t)\in \reals^{n-1}$ defined as follows.  $M(t) = (a_1b_1,\dots ,
a_ib_i,\dots ,a_{n-1}b_{n-1})$ where $a_i$ is the number of leaves that are left descendants  of the $i$-th internal node $v_i$ of the tree $t$   and $b_i$ is the number of leaves that are  right descendants  of  $v_i$ within the tree $t$.  One may check, for example, that  the associahedron given by trees with 4 leaves has vertices $(3,2,1), (3,1,2), (1,4,1), (2,1,3), $ and $(1,2,3)$.

Turning now to the Tamari lattice, a
 cover relation $u\prec v$ in the Tamari lattice results from   applying a single associativity relation in our rooted, binary, planar  tree regarded as a parenthesization.  Thus, $v$ is obtained from $u$ by replacing $((x, y),z))$ by $(x,(y , z))$  somewhere in the parenthesized expression, with   the objects $x,y,z$ either being individual letters or being   larger bracketed expressions themselves.   Notice that such an operation 
 will have the impact  within Loday's realization of the associahedron 
 of replacing some pair $(a_i,b_i)$ by $(a_i, b_i + b_{i+r})$ and replacing $(a_{i+r},b_{i+r})$ by $(a_{i+r} - a_i, b_{i+r})$    while leaving  all other   $a_j , b_j$  unchanged. 
   Thus,  $M(t)$ is unchanged except for having the coordinate   $a_ib_i$ replaced by  $a_ib_i + a_i b_{i+r}$ and  $a_{i+r}b_{i+r}$ replaced by  $a_{i+r}b_{i+r} - a_i b_{i+r}$.
    We may  again use any cost vector {\bf c} $= (c_1,\dots ,c_n)\in \reals^n$  with strictly descending  coordinates $c_1 > c_2 > \cdots > c_n$  to deduce that $u\prec v$ implies $\bc\cdot u < \bc\cdot v$.  
   See  \cite{BW}, \cite{HT}  for further background on the Tamari lattice.
 \end{ex}



\begin{thm}\label{weak-theorem}
Each open interval  $(u,v)$ in the weak order has order complex 
which is homotopy equivalent to a ball or a sphere of some dimension.
\end{thm}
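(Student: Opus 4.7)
The plan is to realize this as an immediate application of Theorem~\ref{simple-polytopes} to the permutahedron $P_n$, using the cost vector constructed in Example~\ref{permutahedron-cost-vector}. To do this, I need to verify three preconditions: that $P_n$ is a simple polytope, that the cost vector $\bc = (c_1, \ldots, c_n)$ with $c_1 > c_2 > \cdots > c_n$ is generic and yields $G(P_n, \bc)$ equal to the Hasse diagram of weak order, and that weak order is a lattice.

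First I would recall (or cite a standard reference such as \cite{zi}) that the permutahedron $P_n$ is a simple $(n-1)$-polytope: each vertex $x_\pi$ is incident to exactly $n-1$ edges, one for each adjacent transposition $s_i$ acting on $\pi$, and any $k$ of these edges span a $k$-face. Next, Example~\ref{permutahedron-cost-vector} already does the bulk of the verification: for $c_1 > c_2 > \cdots > c_n$, the cost vector is generic (distinct coordinates force distinct values $\bc \cdot x_\pi$ on the vertices, since the $x_i$ are distinct) and the computation $\bc \cdot x_v - \bc \cdot x_u = c_k - c_l > 0$ for each weak order cover $u \prec v = u s_i$ shows that the induced orientation of the $1$-skeleton is precisely the Hasse diagram of weak order on $S_n$.

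Finally, I would invoke the classical fact (Bj\"orner, see e.g.\ \cite{BW}) that the weak order on $S_n$ is a lattice. Having checked all three hypotheses of Theorem~\ref{simple-polytopes}, I apply that theorem directly to conclude that for every pair $u \le v$ in weak order, the order complex $\Delta(u,v)$ is homotopy equivalent to a ball or a sphere of some dimension, which is exactly the stated conclusion.

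Since each step is either standard background or a direct quotation of a result already proved in the paper, there is no serious technical obstacle; the only thing to be careful about is making sure that the \emph{same} generic cost vector is used throughout (so that the geometric notion of pseudo-join underlying Theorem~\ref{simple-polytopes} is the one coming from $\bc$), and that the identification of source/sink in Theorem~\ref{simple-polytopes} is consistent with the choice $\hat{0} = e$ forced by Example~\ref{permutahedron-cost-vector}.
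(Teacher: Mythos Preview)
Your proposal is correct and follows essentially the same approach as the paper: verify that the permutahedron is simple, that the cost vector of Example~\ref{permutahedron-cost-vector} makes $G(P_n,\bc)$ the Hasse diagram of weak order, and that weak order is a lattice, then invoke Theorem~\ref{simple-polytopes}. The only minor discrepancy is the citation for the lattice property of weak order; the paper points to \cite{BB} (Bj\"orner--Brenti, \emph{Combinatorics of Coxeter groups}) rather than \cite{BW}, and that is the more standard reference for this fact.
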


\begin{proof}
We obtain the Hasse diagram for weak  order as the 1-skeleton of the permutahedron, which is a simple polytope, using a cost vector as in Example ~\ref{permutahedron-cost-vector}.  A proof that the weak order is  a lattice may be found  in \cite{BB}. 
Thus, Theorem ~\ref{simple-polytopes} applies.
\end{proof}

The homotopy type of the intervals in weak order was previously determined in \cite{Ed}, \cite{EW}, and subsequently by a different method in  \cite{bj4}.

\begin{thm}\label{Tamari-theorem}
Each open interval $(u,v)$ in the Tamari lattice has order complex homotopy equivalent to a  ball or a sphere of some dimension.
\end{thm}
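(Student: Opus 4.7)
The plan is to apply Theorem \ref{simple-polytopes} directly, in exactly the same manner as the preceding theorem handled the weak order. What is needed is to verify the three hypotheses of that theorem for the Tamari lattice: that its Hasse diagram arises as $G(P,\bc)$ for some simple polytope $P$ and generic cost vector $\bc$, and that the resulting poset is in fact a lattice.

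First I would invoke Loday's realization of the associahedron $A_n$ recalled in Example \ref{associahedron-cost-vector}. This is a well-known simple polytope of dimension $n-2$, whose vertices are indexed by rooted planar binary trees with $n$ leaves and whose edges correspond to single applications of an associativity relation. Next, I would take the cost vector $\bc=(c_1,\dots,c_{n-1})$ with strictly decreasing coordinates $c_1>c_2>\cdots>c_{n-1}$ exhibited in Example \ref{associahedron-cost-vector}. The computation there shows that for each Tamari cover relation $u\prec v$ one has $\bc\cdot M(u)<\bc\cdot M(v)$, so the orientation induced by $\bc$ on the $1$-skeleton of $A_n$ agrees edgewise with the Tamari cover relations. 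Hence $G(A_n,\bc)$ is precisely the Hasse diagram of the Tamari lattice.

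To complete the application of Theorem \ref{simple-polytopes} I would cite the classical fact, due to Tamari, that this poset is indeed a lattice; a modern reference is \cite{BW} or \cite{HT}. With all the hypotheses in place, Theorem \ref{simple-polytopes} yields that every open interval $(u,v)$ in the Tamari lattice has order complex homotopy equivalent to a ball or a sphere of some dimension, as desired.

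There is essentially no obstacle here beyond assembling the ingredients: the substantive work, namely the pseudo-join/join identification and the Quillen Fiber Lemma argument, has already been done in Theorem \ref{simple-polytopes}. The only care needed is to make sure that the cost vector from Example \ref{associahedron-cost-vector} is indeed generic (which holds since the coordinates are strictly decreasing and the vertices $M(t)$ are distinct integer points of $\reals^{n-1}$), so that the framework of the paper genuinely applies.
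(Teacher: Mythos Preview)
Your proposal is correct and follows essentially the same approach as the paper: both verify the hypotheses of Theorem~\ref{simple-polytopes} by invoking the associahedron realization and cost vector from Example~\ref{associahedron-cost-vector} and then citing the lattice property of the Tamari poset. The only cosmetic difference is that the paper cites \cite{Pallo1} for the lattice property whereas you cite \cite{HT} (which is indeed the standard reference), and you add an explicit remark on genericity that the paper omits.
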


\begin{proof}
The Tamari lattice  has as its Hasse diagram  the 1-skeleton of the associahedron with respect to any cost vector as in Example ~\ref{associahedron-cost-vector}.   A proof that the Tamari lattice  is a lattice appears in \cite{Pallo1}.  
Thus, the Tamari lattice meets all the conditions of Theorem ~\ref{simple-polytopes}.
\end{proof}
 
 The homotopy type of the intervals in the Tamari lattice was previously determined by Bj\"orner and Wachs in \cite{BW}, where they note that this result also essentially follows from work of Pallo in \cite{Pallo}.

Next we combine several results from the literature in a manner suggested to us by Nathan Reading to deduce the following result which generalizes Theorem ~\ref{Tamari-theorem}.

\begin{thm}\label{Cambrian-theorem} 
 Each open interval $(u,v)$   in any $c$-Cambrian lattice has order complex that is homotopy equivalent to a ball or a sphere of some dimension.  
\end{thm}
\begin{proof}
See Proposition 3.1 in \cite{hohlweg} for the fact that the  Hasse diagram of the $c$-Cambrian lattice is obtained from the polytope $Asso_c^a(W)$ known as a generalized associahedron given by $W$ (as defined e.g. in \cite{hohlweg})  
by choosing a suitable cost vector $\bf{c}$  and taking the directed graph that $\bf{c}$  
induces on the 1-skeleton of the polytope.  Just before Example 3.5 in \cite{hohlweg}, it is asserted that all of these polytopes are simple.   This is proven as Theorem 3.4 in \cite{hugh}.  Thus, Theorem ~\ref{simple-polytopes} applies in the case of all $c$-Cambrian lattices.
\end{proof}

Thus, we   recover Reading's results on the homotopy type of intervals in the $c$-Cambrian lattice, thereby showing that all generalized associahedra can  also be handled by our approach.

\subsection{The case of  zonotopes}\label{zonotope-section}

Now we turn to another large class of examples of polytopes to which our  results will apply, namely all simple polytopes which are zonotopes.    Bj\"orner already determined the homotopy type of all open intervals for   zonotopes in \cite{bj4}, but we nonetheless include this discussion so as to show   how  another large and important class  of polytopes fits into our framework.

 \begin{prop}\label{zonotope-non-revisiting}
 Any zonotope $P$ and any  generic linear functional $\bc$ together 
 will   satisfy  the nonrevisiting  property, and hence  the Hasse diagram property.  Thus, $G(P,\bc )$ has directed diameter at most 
 $n-d$ for  $n$ the number of facets  in $P$ and $d$  the dimension of $P$.
 \end{prop}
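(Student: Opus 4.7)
My plan is to exploit the tope / signed-covector description of the face lattice of a zonotope. Write $Z = \sum_{i=1}^m [0,v_i]$ as a Minkowski sum of line segments in $\reals^d$, so that every vertex of $Z$ has the form $p_\sigma = \sum_{i:\sigma_i=+} v_i$ for some sign vector $\sigma \in \{+,-\}^m$ (with different $\sigma$'s possibly giving the same vertex if the vector configuration is degenerate). Two vertices $p_\sigma, p_{\sigma'}$ are joined by an edge if and only if $\sigma$ and $\sigma'$ differ in exactly one coordinate $i$, and in that case the edge is a translate of the segment $[0,v_i]$.  Moreover, for any facet $F$ of $Z$ with outward normal $n_F$, maximizing $n_F \cdot x$ over $Z$ gives $\max_{x \in Z} n_F \cdot x = \sum_{i:\, n_F \cdot v_i > 0} n_F \cdot v_i$, and an immediate comparison of terms shows that $p_\sigma$ lies in $F$ if and only if $\sigma_j = \operatorname{sign}(n_F \cdot v_j)$ for every $j$ with $n_F \cdot v_j \ne 0$.

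The key observation is that a generic cost vector $\bc$ orients every edge parallel to a common $v_i$ in the same way: in the $+v_i$ direction when $\bc \cdot v_i > 0$ and in the $-v_i$ direction when $\bc \cdot v_i < 0$.  Consequently, along any directed path in $G(Z,\bc)$, each coordinate $\sigma_i$ flips at most once, and always from $-\operatorname{sign}(\bc \cdot v_i)$ to $+\operatorname{sign}(\bc \cdot v_i)$.

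With this, non-revisiting for facets is immediate.  Suppose a directed path $p_{\sigma^{(0)}} \to p_{\sigma^{(1)}} \to \cdots \to p_{\sigma^{(k)}}$ has both $p_{\sigma^{(s)}}$ and $p_{\sigma^{(t)}}$ in a facet $F$, with $s < t$.  For each $j$ with $n_F \cdot v_j \ne 0$ we have $\sigma_j^{(s)} = \sigma_j^{(t)} = \operatorname{sign}(n_F \cdot v_j)$; since $\sigma_j$ flips at most once along the path, it must be constant on $[s,t]$, so every intermediate $p_{\sigma^{(u)}}$ also satisfies the defining sign conditions of $F$ and therefore lies in $F$.  Remark~\ref{facets-implies-faces} then promotes this to non-revisiting for all faces, and Lemma~\ref{1-d-revisiting} in turn yields the Hasse diagram property.

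The diameter bound is the same counting argument already used in the paper to see that the Nonrevisiting Path Conjecture implies the Hirsch Conjecture: every directed edge $u \to v$ departs at least one facet (since distinct vertices of a polytope are distinguished by their sets of containing facets); by non-revisiting, no facet is departed more than once; and the endpoint of any directed path lies in at least $d$ facets (a standard fact for $d$-polytopes).  Hence every directed path has length at most $n - d$.  The only real subtlety lies in setting up the sign-vector description of facets of $Z$; once that is in place, the rest of the argument reduces to bookkeeping and to invoking the results already established earlier in the paper.
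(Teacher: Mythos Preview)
Your proof is correct and follows essentially the same approach as the paper: both exploit the Minkowski-sum structure to observe that a generic $\bc$ orients every edge in a given parallel class the same way, so a directed path can never traverse a generator direction and later its reverse, which is exactly what returning to a departed facet would require. Your sign-vector formulation makes this explicit (the paper gives a one-paragraph informal version of the same parallel-edge argument, and also points to the dual hyperplane-arrangement picture in the next proposition), and your diameter count is identical to the paper's.
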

 
 \begin{proof} 
 Any zonotope is a Minkowski sum of line segments.
 Departing a face while increasing the dot product with the cost vector $\bc $ means traversing an edge in the direction of one of these  line segments generating the zonotope.  But we can never traverse an edge going in exactly the opposite direction to this while still increasing the dot product.
By virtue of a zonotope being a Minkowski sum of line segments, it is not 
   possible to return to the face without at some point traversing a parallel edge in the opposite direction.  The proof of 
   Proposition ~\ref{zonotope-lattice}
    will give another way of seeing why this nonrevisiting  property holds.  
   
   For the last claim, simply observe that each edge in a directed path departs from a facet that may never be revisited and that the final vertex in a directed path will still belong to $d$ facets.  Thus, there can be at most $n-d$ steps since there are at most $n-d$ facets available to be departed at some stage in the directed path.  
 \end{proof}
 
 \begin{prop}\label{zonotope-lattice}
If  $P$ is a simple polytope that  is a zonotope and $\bc $ is generic, 
then  $G(P,\bc )$ is the Hasse diagram of a lattice.  
\end{prop}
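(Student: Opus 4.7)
The plan is to combine Proposition~\ref{zonotope-non-revisiting} with Lemma~\ref{1-d-revisiting}: since zonotopes satisfy the face non-revisiting property, and non-revisiting of 1-dimensional faces is exactly what yields the Hasse diagram property, $G(P,\bc)$ is already the Hasse diagram of some poset $L$. So the only remaining task is to produce a join (and, by duality, a meet) for every pair of elements of $L$.

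My candidate for the join of $u,v\in L$ is $s(u,v)$, the unique sink (Remark~\ref{face-unique-source-sink}) of the smallest face $F(u,v)$ of $P$ containing both vertices. That $s(u,v)$ is a common upper bound is immediate, since within $F(u,v)$ there are directed paths $u\to s(u,v)$ and $v\to s(u,v)$. To check that $s(u,v)\le z$ for every other common upper bound $z$, I would exploit the Minkowski-sum structure: write $P=\sum_{i=1}^n[0,v_i]$ and orient each generator so that $\bc\cdot v_i>0$. Every directed edge of $G(P,\bc)$ is then parallel to a unique $v_i$, and by non-revisiting each $v_i$ is traversed at most once on any directed path. Consequently every vertex $w$ acquires an inversion set $S(w)\subseteq\{1,\dots,n\}$ with $w=\hat 0+\sum_{i\in S(w)}v_i$, and $u\le w$ in $L$ forces $S(u)\subseteq S(w)$. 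Simplicity of $P$ then forces the edges inside $F(u,v)$ to be parallel to a specific independent subset of the $v_i$, so that $S(s(u,v))$ is the saturation of $S(u)\cup S(v)$ within that subset.

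Given any common upper bound $z$, a directed path from $u$ to $z$ either stays inside $F(u,v)$, in which case it must pass through $s(u,v)$, or leaves $F(u,v)$ once and by non-revisiting cannot return. Combining this with the parallel analysis of a directed path from $v$ to $z$, and with the zonotopal constraint that the generators spanning $F(u,v)$ are independent of the transverse ones, I would deduce $S(s(u,v))\subseteq S(z)$ and hence $s(u,v)\le z$. Meets are obtained dually as sources of smallest containing faces, yielding a lattice. The main obstacle lies precisely in this least-upper-bound step: nothing a priori forces a path from $u$ to $z$ that escapes $F(u,v)$ to have already visited $s(u,v)$, so the argument must invoke simplicity essentially, using that the generators indexing $F(u,v)$ behave as a direct summand of the zonotopal structure, in order to rule out a common upper bound slipping around $s(u,v)$ via outside routes.
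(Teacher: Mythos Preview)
Your reduction to the lattice property via Proposition~\ref{zonotope-non-revisiting} and Lemma~\ref{1-d-revisiting} is fine, and your inversion-set framework is essentially the poset-of-regions description. However, your proposed join candidate $s(u,v)$ (the sink of the smallest face containing $u$ and $v$) is simply wrong, so the difficulty you flag in the last paragraph is not a technical obstacle to be overcome but an actual failure of the construction.

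Take the $3$-dimensional permutahedron $P_4$ with a cost vector as in Example~\ref{permutahedron-cost-vector}, so that $G(P_4,\bc)$ is the Hasse diagram of weak order on $S_4$. Let $u=2314$ and $v=1243$. The facets of $P_4$ containing $u$ are indexed by the subsets $\{2\},\{2,3\},\{1,2,3\}$, while those containing $v$ are indexed by $\{1\},\{1,2\},\{1,2,4\}$; these lists are disjoint, so no proper face contains both vertices and $F(u,v)=P_4$. Hence $s(u,v)=\hat 1=4321$. But in weak order $u\vee v=2431$, since the closure of the union of inversion sets $\{(1,2),(1,3)\}\cup\{(3,4)\}$ is $\{(1,2),(1,3),(1,4),(3,4)\}$. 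Thus $s(u,v)$ strictly overshoots the join. The heuristic ``the generators indexing $F(u,v)$ behave as a direct summand'' is exactly what breaks down: the smallest face containing two generic vertices of a zonotope can be the whole polytope even when the join is far below $\hat 1$.

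The paper avoids this by not attempting a direct construction of joins. It instead identifies the poset $L$ with the poset of regions of the central hyperplane arrangement dual to $P$; simplicity of $P$ translates into simpliciality of the arrangement, and then one invokes the theorem of Bj\"orner, Edelman, and Ziegler \cite{BEZ} that the poset of regions of a simplicial central arrangement is a lattice. Your inversion sets $S(w)$ are precisely the separating sets in that picture, so you were one identification away from being able to cite the same result.
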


\begin{proof}
We deduce this fact  
by combining assorted known results,  as explained next. 
 %
 It is well known that every zonotope may be  obtained from a central hyperplane arrangement as follows.   Any  central hyperplane arrangement induces a subdivision of a unit sphere centered at the origin.  If the arrangement is not essential, then restrict this sphere to a subspace through the origin of as high dimension as possible such that the arrangement restricted to that subspace is essential.   Let  the vertices of the resulting subdivision of the sphere be the vertices of a polytope.  Taking the dual polytope to this, the result is a zonotope, and in fact every zonotope may be realized this way.  The point is to make the hyperplanes perpendicular to the line segments comprising the Minkowski sum of line segments. See e.g. \cite{zi} or \cite{HS}.
   From this perspective, an edge of a zonotope departs a face by crossing one of these hyperplanes, namely one that is perpendicular to the direction of the edge being traversed.  We can never revisit the face we just left without  crossing the hyperplane in the opposite direction.  But this would mean  traversing an edge of the polytope  the opposite direction to the edge we used to depart the face, contradicting  $G(P,\bc )$ being induced by a cost vector $\bc $.    Thus,  for $P$ a simple zonotope and $\bc $ generic, this implies that 
    $G(P,\bc )$ must satisfy the nonrevisiting property (so in particular must be a Hasse diagram).   
%
%
%

It is proven in \cite{BEZ}  that the poset of regions given by a central, simplicial hyperplane arrangement is a lattice.  Given a simple zonotope $P$ and generic cost vector $\bc$, the poset having  $G(P,\bc )$ as its Hasse diagram 
is exactly the poset of regions of a central, simplicial hyperplane 
arrangement, hence is always a lattice.  
\end{proof}
 
\begin{thm}\label{zonotope-homotopy}
Whenever a zonotope is a simple polytope, then the poset given by $G(P,\bc)$  has each open interval  homotopy equivalent to a ball or a sphere.
\end{thm}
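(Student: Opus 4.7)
The plan is to observe that this theorem is essentially an immediate combination of two results already established earlier in the paper, so the task is to verify that the hypotheses line up and then invoke them in sequence. Given a simple polytope $P$ that is a zonotope and a generic cost vector $\bc$, the first step is to apply Proposition \ref{zonotope-lattice}, which guarantees that $G(P,\bc)$ is the Hasse diagram of a lattice $L$ (where the underlying reason is that $L$ is realized as the poset of regions of a central simplicial hyperplane arrangement, which is known to be a lattice by \cite{BEZ}).

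Once $G(P,\bc)$ has been identified as the Hasse diagram of a lattice, the second step is simply to invoke Theorem \ref{simple-polytopes}. The hypotheses of that theorem require $P$ to be a simple polytope with $\bc$ a generic cost vector such that $G(P,\bc)$ is the Hasse diagram of a poset, and both of these are in hand. Its conclusion is precisely that each open interval $(u,v)$ in $L$ has order complex homotopy equivalent to a ball or a sphere, which is exactly what we wish to prove.

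Since both ingredients are already proven, there is no real obstacle; the only thing worth emphasizing in the write-up is that genericity of $\bc$ is assumed implicitly by our standing conventions, and that the lattice property supplied by Proposition \ref{zonotope-lattice} is strictly stronger than the poset property demanded by Theorem \ref{simple-polytopes}, so the implication goes through without any additional hypothesis. One might also wish to note, for context, that the same conclusion was obtained by Bj\"orner in \cite{bj4} directly, so the value of this theorem in the present paper is mainly as a demonstration that the general framework developed here recovers and uniformly explains a broad class of known homotopy computations.
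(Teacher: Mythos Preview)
Your proposal is correct and follows essentially the same route as the paper: invoke Proposition~\ref{zonotope-lattice} to obtain that $G(P,\bc)$ is the Hasse diagram of a lattice, then apply Theorem~\ref{simple-polytopes}. The paper's proof additionally cites Proposition~\ref{zonotope-non-revisiting} for the Hasse diagram property, but since Proposition~\ref{zonotope-lattice} already asserts that $G(P,\bc)$ is the Hasse diagram of a lattice, your slightly streamlined version is entirely adequate.
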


\begin{proof}
The first thing to note is that the poset will always be a lattice in this case, by Proposition ~\ref{zonotope-lattice}.
 The Hasse diagram property is proven for all zonotopes  in Proposition ~\ref{zonotope-non-revisiting}. 
 Thus,  Theorem ~\ref{simple-polytopes} applies to all simple  zonotopes.  
\end{proof}

\subsection{
More general facial orientations of  simple regular CW spheres}
\label{posets-from-shellings}

Vic Reiner raised the question  (personal communication) of whether we could use 
 Proposition 5.3  from  \cite{AER}, a result that is recalled  as Proposition ~\ref{AER-result} below,   to generalize our 
results.  Specifically, he suggested generalizing   from our framework of  acyclic 
orientations on 1-skeleta of simple polytopes  given by cost vectors 
  to   more general  acyclic orientations known as facial orientations; Reiner also suggested  trying to prove results  
  for a somewhat larger  class of   regular CW  spheres than just the simple polytopes discussed so far. 
  
   Recall that a {\bf facial orientation}  of the 1-skeleton of a regular CW complex  $K$ 
 is an orientation  $\mathcal{O}$ of the 
1-skeleton graph of  $K$ 
such that for each cell $\sigma  \in K$, the restriction of 
$\mathcal{O}$ 
to the closure of $\sigma $ has a unique source and a unique sink. 
    It is well known 
  that a 
shelling of a simplicial polytope is equivalent to a facial orientation of its dual polytope; the special case of line shellings is also
well-known to yield  precisely those facial orientations which are induced by cost vectors.  One may easily construct 
examples demonstrating that  not all 
 facial orientations can be  induced by cost vectors.  

\begin{prop}[Proposition 5.3 of \cite{AER}]\label{AER-result}
Let $X$ be  a  shellable regular CW sphere with  $P$ its  face poset.  There is a dual regular CW sphere, denoted  $X^*$,  with face poset $P^*$.  Letting  $G(P^*)$ denote  the graph arising as the   1-skeleton of $X^*$, then the acyclic orientation 
$\mathcal{O} $  of $G(P^*)$ induced by any  shelling order of $X$ 
is  a facial orientation on the graph of $X^*$.
\end{prop}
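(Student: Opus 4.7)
The plan is to derive this from a standard hereditary property of shellings: if $F_1 < F_2 < \cdots < F_N$ is a shelling of $X$ and $\rho$ is any cell of $X$, then the induced order on the facets of $X$ containing $\rho$ yields a shelling of the star subcomplex $\mathrm{star}_X(\rho)$. Concretely, if the facets of $X$ containing $\rho$ are $F_{i_1}, F_{i_2}, \ldots, F_{i_k}$ with $i_1 < i_2 < \cdots < i_k$, then for each $\ell \geq 2$ the intersection $F_{i_\ell} \cap (F_{i_1} \cup \cdots \cup F_{i_{\ell - 1}})$ should be a nonempty pure subcomplex of codimension one in $F_{i_\ell}$, every facet of which contains $\rho$. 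I would first record this hereditary property as a lemma, using the fact that in a regular CW sphere each codim-one cell lies in exactly two facets: any codim-one face appearing in the shelling intersection $F_{i_\ell} \cap (F_1 \cup \cdots \cup F_{i_\ell - 1})$ that contains $\rho$ must be shared with an earlier facet that itself contains $\rho$, and such codim-one faces do appear when $\ell \geq 2$ by connectedness of the link of $\rho$ together with the purity built into the shelling condition.

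With the hereditary lemma in hand, the duality is unwound as follows. The cell $\overline{\rho^*}$ in $X^*$ has vertex set $\{v_{i_1}, \ldots, v_{i_k}\}$, where $v_i$ denotes the dual vertex of $F_i$, and two vertices $v_{i_\ell}$ and $v_{i_m}$ are joined by an edge of $\overline{\rho^*}$ precisely when $F_{i_\ell}$ and $F_{i_m}$ share a codim-one face of $X$ that contains $\rho$. I would adopt the convention that each edge of $X^*$ is oriented from the lower-indexed to the higher-indexed endpoint, matching the convention by which generic cost vectors induce line-shelling orientations. The hereditary lemma then says exactly that every $v_{i_\ell}$ with $\ell \geq 2$ has an incoming edge in $\overline{\rho^*}$ from some $v_{i_m}$ with $m < \ell$, namely any edge dual to a codim-one face witnessing the shelling condition at step $\ell$. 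Consequently $v_{i_1}$ is a source of the restriction of $\mathcal{O}$ to the 1-skeleton of $\overline{\rho^*}$, and it is the unique source.

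For the unique-sink claim I would apply the symmetric version of the argument. The cleanest route is to invoke reversibility of shellings: for the regular CW spheres under consideration, $F_N, F_{N-1}, \ldots, F_1$ is again a shelling of $X$, under which sources and sinks of $\mathcal{O}$ on every $\overline{\rho^*}$ are exchanged; the source argument applied to the reversed shelling identifies $v_{i_k}$ as the unique source of the reversed orientation, hence the unique sink of $\mathcal{O}$. If one wishes to avoid reversibility of shellings, the same conclusion can be reached by proving directly the dual hereditary statement: for each $\ell < k$, some codim-one face of $F_{i_\ell}$ containing $\rho$ is also contained in some $F_{i_m}$ with $m > \ell$. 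This direct statement is provable by a pigeonhole on the codim-one faces of $F_{i_\ell}$ containing $\rho$, all but one of which must match codim-one faces of other facets in the star, using the two-facets-per-codim-one-cell property of regular CW spheres together with the hereditary lemma run backwards from step $k$.

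The main obstacle I anticipate is rigorously establishing the hereditary lemma in the generality of regular CW spheres rather than only for simplicial complexes, where the analogue is classical and proved by restricting shellings to stars. One must verify both that the restricted intersection is pure of the correct dimension in the larger CW setting and that every facet of this intersection actually contains $\rho$; the latter uses the two-facets-per-codim-one-cell property, but the former requires a careful unpacking of purity inside $\mathrm{star}_X(\rho)$, potentially via induction on the dimension of $\rho$ or via the observation that the shelling intersection, restricted to the star, is a union of codim-one faces of $F_{i_\ell}$ through $\rho$. A secondary technical point is justifying reversibility of shellings for general shellable regular CW spheres, which, as indicated above, can be sidestepped by performing the symmetric hereditary-lemma argument explicitly.
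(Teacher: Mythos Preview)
The paper does not supply its own proof of this proposition: it is quoted verbatim as Proposition~5.3 of \cite{AER} and used as a black box to motivate Theorem~\ref{more-general-results}. So there is no in-paper argument to compare your proposal against.

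That said, your outline is the standard route to this result and is essentially how it is proved in \cite{AER}. The key step is exactly the hereditary lemma you isolate: a shelling of $X$ restricts to a shelling of $\mathrm{star}_X(\rho)$ for every cell $\rho$, which under duality says that the first (respectively last) facet of $X$ through $\rho$ in the shelling order is the unique source (respectively sink) of $\overline{\rho^*}$. Your identification of the two technical pressure points is accurate. First, the hereditary lemma for general regular CW spheres (as opposed to simplicial ones) does require care; the argument goes through because in a regular CW sphere the closed star of $\rho$ is itself a regular CW ball whose boundary structure is controlled by the thinness of the face poset, so the codimension-one intersection condition survives restriction. Second, reversibility of shellings for regular CW spheres is a genuine ingredient; it follows, for instance, from the fact that the face poset of a regular CW sphere is Eulerian together with the characterization of shellability via recursive coatom orderings, or one can bypass it as you suggest by running the dual pigeonhole argument directly. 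Either way your sketch would complete to a correct proof with these points filled in.
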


Our techniques  do yield the following partial  answer to  Reiner's question.   
On the other hand, Example ~\ref{cannot-generalize}  in conjunction with Remark ~\ref{limited}
constrains the extent to which a positive answer 
to Reiner's question is possible.

\begin{thm}\label{more-general-results}
Let $P$ be a simple polytope, and let $\mathcal{O}$ be a facial acyclic 
orientation on its 1-skeleton.  Suppose that
the directed graph on the 1-skeleton of $P$ induced by $\mathcal{O}$ 
is the Hasse diagram of a lattice $L$.  Then $L$ has the following properties:
\begin{enumerate}
\item
The pseudo-join of any collection $\{ a_1, a_2, \dots ,a_i \} $  of elements of $L$ 
all covering a common element $u$ will equal the join 
 $a_1\vee a_2\vee \cdots \vee a_i $ of these same elements.
\item
For $S, T$ distinct collections of elements all covering a fixed element $u$, then the pseudo-join of the elements of $S$ will not equal the pseudo-join of the elements of $T$.
\item
Each open interval in $L$ has order complex homotopy equivalent to a ball or a sphere.
\end{enumerate}
\end{thm}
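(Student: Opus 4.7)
The central observation I would use is that throughout Section~\ref{polytopes-section}, the generic cost vector $\bc$ enters only through its consequence that the induced orientation on each face of $P$ admits a unique source and a unique sink, which is precisely the defining property of a facial orientation. My plan is therefore to reread each of Lemma~\ref{not-same-source-sink}, Theorem~\ref{2-atom-base-case}, Theorem~\ref{pseudo=join}, Lemma~\ref{new-pseudo-joins}, Corollary~\ref{implies-Boolean}, and Theorem~\ref{simple-polytopes}, and verify that every appeal to ``$\bc \cdot x$'' can be replaced by an appeal to the facial orientation $\mathcal{O}$.

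First I would extend Definition~\ref{pseudo-def}: given $u \in L$ and elements $a_1, \ldots, a_i$ all covering $u$, the pseudo-join would be defined as the sink of $\mathcal{O}$ restricted to the unique smallest face of $P$ containing $u$ together with the edges $e_{u,a_1}, \ldots, e_{u,a_i}$. Existence of this smallest face uses only that $P$ is simple, and uniqueness of the sink is guaranteed by $\mathcal{O}$ being facial. I would also need the analogue of Remark~\ref{2-face-source-sink}: the boundary of each $2$-face splits into exactly two directed paths from its unique source to its unique sink. This will follow because the boundary of such a face is a cycle, and any intermediate vertex of one of the two arcs would otherwise be a local source or sink, contradicting the uniqueness built into the definition of a facial orientation.

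With these two observations in place, the proof of Lemma~\ref{not-same-source-sink} should transfer verbatim, since the discrete intermediate value theorem argument only depends on the sign $o(u) \in \{+1,-1\}$ attached to the complementary edge of each vertex $u \in p_F$, and this sign is determined by $\mathcal{O}$ alone. Given Lemma~\ref{not-same-source-sink}, both Theorem~\ref{2-atom-base-case} and the inductive proof of Theorem~\ref{pseudo=join} should go through without change, since the only other ingredients are the Hasse diagram and lattice hypotheses together with the $2$-face structure and the simple polytope property; this yields conclusion~(1). The proof of Lemma~\ref{new-pseudo-joins} uses only Lemma~\ref{not-same-source-sink} combined with the face-intersection property of simple polytopes, both of which remain available, which gives conclusion~(2). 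For conclusion~(3), I would use the dual closure map $f$ from the proof of Theorem~\ref{simple-polytopes} sending $z \in (u,v)$ to the pseudo-join of the atoms of $[u,v]$ below $z$; this is a well-defined poset map by conclusion~(1), its image is a truncation of a Boolean lattice by Corollary~\ref{implies-Boolean} (itself a direct consequence of (1) and (2)), and its fibres have maximum elements so the Quillen Fiber Lemma applies to identify $\Delta(u,v)$ up to homotopy with $\Delta(B_n \setminus \{\hat{0},\hat{1}\})$ or $\Delta(B_n \setminus \{\hat{0}\})$, namely with a sphere or a ball.

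The hard part is bookkeeping rather than a new difficulty: I would need to double-check that the monotonicity parameter $d(F,u)$ appearing in the proof of Theorem~\ref{2-atom-base-case} remains meaningful. Since $d(F,u)$ is defined as the length of a longest saturated chain from $\hat{0}$ to the source of $F$ in the Hasse diagram of $L$, it depends only on the poset $L$ and not on any numerical values of $\bc$; hence the ``maximal offending $2$-face'' argument used to derive the contradiction should carry over unchanged, and no new obstruction appears in the facial orientation setting.
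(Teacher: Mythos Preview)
Your proposal is correct and follows exactly the paper's own approach: the paper's proof simply asserts that the earlier arguments for Theorem~\ref{pseudo=join}, Lemma~\ref{new-pseudo-joins}, and Theorem~\ref{simple-polytopes} apply verbatim to facial orientations and leaves the verification as a ``completely straightforward exercise for the reader.'' Your write-up is essentially that exercise carried out in detail, correctly isolating the one property of $\bc$ actually used (unique source and sink on each face) and confirming that the auxiliary quantity $d(F,u)$ is purely poset-theoretic.
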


\begin{proof}
Statements (1), (2) and (3), respectively, were already proven for 
$\mathcal{O} $ induced by a cost vector  
in Theorem ~\ref{pseudo=join}, Lemma ~\ref{new-pseudo-joins} and Theorem ~\ref{simple-polytopes}, respectively.  The  same proofs still hold entirely unchanged for more general facial orientations.  Checking this is left as a completely straightforward exercise for the interested reader.  
\end{proof}

\begin{rk}\label{limited}
Our proof of Theorem
~\ref{2-atom-base-case} relies in an essential way on the property of polytopes
that two distinct  2-dimensional faces cannot share both an edge and a  vertex not in that edge.  Our proofs of 
Theorem ~\ref{pseudo=join}, Lemma ~\ref{new-pseudo-joins} and Theorem ~\ref{simple-polytopes} all 
rely upon Theorem ~\ref{2-atom-base-case}.
This property  of polytopes  used in the proof of  Theorem ~\ref{2-atom-base-case}
 does not hold for regular CW spheres in general, 
even with the further assumption that the  regular CW sphere 
is simple.  Example ~\ref{cannot-generalize} exhibits  this non-implication, showing that Theorem ~\ref{more-general-results} cannot be extended from polytopes to regular CW spheres.
\end{rk}

\begin{ex}\label{cannot-generalize}
Now we will  construct a simple regular CW sphere with two 2-cells sharing an edge and also  sharing a vertex that is disjoint from that edge.  To this end, we give a regular CW decomposition of the boundary of a cylinder as follows.  
Begin by placing four vertices denoted $v_1,v_2,v_3,v_4$ clockwise about the boundary of the upper disk comprising the top of the  cylinder.  Now likewise put four vertices denoted $w_1,w_2,w_3,w_4$ clockwise about the boundary of the  bottom disk comprising the bottom of the cylinder.  Introduce edges $e_{v_i,v_j}$ for each $i\ne j$ other than the pair $i=1,j=3$.  Likewise introduce edges 
$e_{w_i,w_j}$ for each $i\ne j$ other than the pair $i=1,j=3$.  Also introduce edges $e_{v_1,w_1}$ and $e_{v_3,w_3}$.  The resulting subdivision of this 2-sphere, namely of the boundary of a cylinder, will also have the following six 2-cells.  There are 2-cells  with vertices $\{ v_1,v_2,v_4 \} $ and with $\{v_3,v_2,v_4\} $ covering the top disk, 2-cells with vertices 
$\{ w_1,w_2,w_4\} $ and with $\{ w_3,w_2,w_4\} $ covering the bottom disk,  and 2-cells $F$ and $F'$  with vertex sets
$\{ v_1,v_2,v_3,w_1,w_2,w_3 \} $ and with $\{ v_1,v_4,v_3, w_1, w_4,w_3 \} $ covering the remainder of the boundary of the cylinder.  The  2-cells $F$ and $F'$ share the edge 
$e_{v_1,w_1} $ and also  the edge $e_{v_3,w_3}$.  Thus, $F$ and $F'$  share four  vertices with  two of these four  vertices comprising an edge.  
\end{ex}

\section{Further questions and remarks}\label{further-section}

\begin{rk}
In \cite{Gr}, Curtis Greene raised the question of finding interesting classes of posets with each open interval having M\"obius function $0, 1, $ or $-1$.  Theorems ~\ref{simple-polytopes}
and ~\ref{more-general-results}
speak to that question by giving large classes of such posets.
\end{rk}

\begin{rk}
 We refer readers to   \cite{Kalai} for interesting, related  work  that takes a  somewhat similar perspective to ours, work which  provided  some inspiration for parts of our work.  In \cite{Kalai}, Kalai 
 proved  that the combinatorial type of a simple polytope is determined by its 1-skeleton,  also making use of  a cost vector in this construction as well as utilizing   the consequent  sources and sinks of the various faces. 
\end{rk}


One might be tempted, in light of our results,  to ask  the following question: 

\begin{qn}\label{tempting-question}
Let  $P$ be  a simple polytope and  let $\bf{c}$ be a generic cost vector  such that $G(P,\bf{c})$ is the Hasse diagram for a poset.  Does this imply that this poset is a lattice?
\end{qn}

An affirmative answer would  have allowed our hypotheses throughout much of this paper 
 to be relaxed from lattice to poset. 
However, 
Francisco Santos has  provided the following example, showing  that the  
answer to Question ~\ref{tempting-question} is negative in general. 

\begin{ex}[Francisco Santos]
  Start with an octahedron $P$ with two antipodal vertices as source and sink, leaving four intermediate vertices  $v_1,v_2,v_3,v_4$ connected with each other with the structure of a  4-cycle.  Put two opposite vertices  $v_1,v_3$ among these four vertices  at a higher height than the other two, namely with $\bc \cdot v_i > \bc \cdot v_j $ for each $i\in \{ 1, 3\} $ and each $j\in \{ 2, 4\} $.
     Now truncate each of the six  vertices by slicing  by a generic hyperplane with slope chosen so as  to make this a simple polytope with the Hasse diagram property (with each of the original vertices replaced by four new vertices).  This will yield a simple polytope with $G(P,\bc )$ a Hasse diagram for a  poset that is not a lattice, since there will be a pair of vertices having two different least upper bounds; specifically, we may use one of the four vertices replacing $v_2$ together with one of the four vertices replacing $v_4$.  We may choose such vertices so that we get one 
  least upper bound in the quadrilateral replacing $v_1$ and another in the quadrilateral replacing $v_3$.
  \end{ex}

One may also  construct a 
polytope $P$ and a cost vector $\bc$  such that $G(P,\bf{c})$ is the Hasse 
diagram of a poset  with the pseudo-join of some collection of atoms which is
not equal to  the  least upper bound of this same collection of 
atoms, as shown next.   

\begin{ex}\label{not-join=pseudo}
Start with a $3$-dimensional cube and  add a new vertex by coning over  one of the facets of the cube that contains the vertex of the cube where the cost vector was maximized, positioning this new vertex so that  it becomes the pseudo-join
of all the atoms.  This can be done by letting ${\bf c} = (100, 2, 1)$, letting the vertices of the cube be
$(\pm 1, \pm 1, \pm 1)$ and taking as the cone point over a facet of this cube the vertex 
$(2, 0, 0)$.  
To make  the 1-skeleton of the polytope obtained this way  a Hasse diagram, we  cut off the vertex
$(2,0,0)$  of the cone with a hyperplane near this vertex with a slope for this slicing hyperplane chosen  in such a way  that one of the resulting four new vertices (replacing $(2,0,0)$)  becomes the pseudo-join of all the atoms,  while the vertex that was the least upper bound of the atoms in the original cube still remains the least upper bound  of all the atoms.  

This is not a simple polytope,  but 
one may transform this into a simple polytope  by shaving by a hyperplane  at each node of degree higher than 3.  However, that shaving operation will change which element is the join of the set of  three atoms in such a way  that indeed the  join of the three atoms is  the pseudo-join of the same 
set of three atoms, transforming  this into a positive example of our result that joins equal pseudo-joins for simple polytopes.
\end{ex}

To make our results more effective on naturally arising examples, it could also help to answer the following question:

\begin{qn}\label{good-way-recognize}
Is there a good way to recognize when $G(P,\bc )$ will be the Hasse diagram of a poset?  Is there an effective  way to determine when this  poset will be a lattice?
\end{qn}

Regarding the first part part of Question ~\ref{good-way-recognize}, Louis Billera has suggested considering the directed adjacency matrix $A$ where he observed that the Hasse diagram property would imply that the  trace of $A^T \cdot A^i$ would need to be 0 for each $i\ge 2$, letting $A^T$ denote the transpose of $A$.  From the standpoint of algorithmic efficiency, this requires an $n\times n$  matrix where $n$ is the number of vertices of $G(P,\bc )$, which may be much larger than either  the dimension or the number of facets in the polytope.

 \begin{rk} 
  It may seem natural now  to ask 
      whether a  simple polytope $P$ together with a 
      generic cost vector $\bf{c}$ such that $G(P,\bf{c})$ is the Hasse diagram of a poset will always satisfy the directed graph version of the  nonrevisiting path conjecture.  An obvious place to start is  to ask  whether any of the known counterexamples to the Hirsch Conjecture give rise to directed graphs $G(P ,\bc ) $  that 
meet the  hypotheses of Conjecture ~\ref{main-conj} or at least are Hasse diagrams of posets, since 
these polytopes  are all known to be counterexamples to the undirected version of the nonrevisiting path conjecture.  
Lemma ~\ref{source-non-revisit} implied  that there are no counterexamples of this type  to Conjecture ~\ref{main-conj}. 
 
The original construction of Francisco Santos in \cite{Sa} of a polytope violating the Hirsch Conjecture 
 was a spindle (see Definition ~\ref{spindle-def}) but was  not a simple polytope.  His presentation for this polytope is essentially as  an $H$-polytope, in that he gives the vertices of its dual polytope (from which the bounding hyperplanes of the original polytope may easily be deduced).  Santos  remarks on p. 389 in \cite{Sa} that determining the vertices of this polytope seems computationally out of reach, which we note also makes determining the undirected graph of the 1-skeleton elusive.  In particular, this makes  the directed graph, $G(P,\bc )$ for any particular choice of $\bc $, also computationally out of reach.  A second type of  computational challenge to thoroughly examining these examples 
 would be the need to consider all possible generic cost vectors $\bc $.  There are exponentially many orientations on  the 1-skeleton graph  to consider (as a function of the number of graph edges), though in principle one would only need to consider the  ``good orientations'' in the sense of Kalai from \cite{Kalai}.    Thus, there are multiple substantial challenges to understanding this example in full, but in any case it will not yield a counterexample to Conjecture ~\ref{main-conj}.  
 
 The later smaller counterexamples of Matschke, Santos, and Weibel to the Hirsch Conjecture appearing  in \cite{MSW}  are  simple polytopes that are spindles.    
 All of these known examples of polytopes   violating the Hirsch Conjecture result from  $d$-polytopes which are spindles  with $n$ facets having the property that the known  pair of vertices at distance greater than $n-d$ from each other are  the two  distinguished vertices in the  spindle.  
 Our  Theorem \ref{santos-okay}  shows  for these examples where  $P$ is a simple  spindle  
 that $G(P,\bc )$  is not a Hasse diagram.  Thus, 
Theorem ~\ref{santos-okay} shows that these constructions violating the Hirsch Conjecture  do not also serve as counterexamples to our Conjecture ~\ref{main-conj}.
 \end{rk}

\begin{rk}
In seeking more examples of polytopes fitting into  our framework, 
 one might be  tempted  to  consider   fiber polytopes (introduced 
 in \cite{BS}); after all, the  permutahedron and associahedron  are both fiber polytopes and more specifically are monotone path polytopes, and both  the permutahedron and associahedron  do 
 fit into our framework.  However,  every polytope $P$ may be realized 
 as a monotone path polytope as follows. 
  Take  the join of $P$  with a point  $p$.  Project the resulting polytope  $P'$  to the real  
 line by a linear map $\pi $  in such a way that 
 the fiber  $\pi^{-1}(t)$ over each point $t$  on the real line is either empty, the single point  $p$, or has the combinatorial type of $P$.
 One may check that   the fiber polytope resulting from the map  $\pi : P'\rightarrow \reals $ has the combinatorial type of $P$. 
 Thus, monotone path  polytopes are too general a class of polytopes to hope for our results to apply to all of them.
  
 Focusing on generalized permutahedra (see \cite{Post}) still gives a class that 
 will not always satisfy all of our hypotheses. 
   To see  this, note  that generalized permutahedra 
 sometimes have  triangular  faces, forcing   the Hasse diagram property to fail  for
 all choices of cost vector.
\end{rk}

We conclude with one of the most  natural and potentially important questions that stems from our work:      

\begin{qn}\label{applic-in-our-setting}
Do any of the  important classes of 
polytopes coming from real-world problems studied in  
operations research fit into our framework?  In other words, are they simple polytopes whose 1-skeleta (with respect to a choice of cost vector)  are Hasse diagrams of lattices?   One can either ask this for a fixed cost vector, or preferably for all generic cost vectors for the given simple polytope.
\end{qn}

Given an affirmative answer for some such class of polytopes, Conjecture ~\ref{main-conj} would assert 
that  the simplex method for linear programming  applied to this class of polytopes  would  require at most $n-d$ steps  regardless of choice of pivot rule.    It is plausible that not all of the hypotheses in Conjecture ~\ref{main-conj} are necessary, though they all seem like they would be helpful for proving Conjecture ~\ref{main-conj}; if a version of Conjecture ~\ref{main-conj} with fewer hypotheses  could be proven, this would of course increase the potential for an affirmative answer to Question ~\ref{applic-in-our-setting}.
%

We have ruled out some 
classes of real-world polytopes as  potential 
candidates for Question ~\ref{applic-in-our-setting}.  Specifically, we have found small examples of  transportation polytopes, 0/1-polytopes, vertex decomposable polytopes, 
and  traveling salesman polytopes that do not meet all of the hypotheses for Conjecture ~\ref{main-conj} in its present form by virtue of having triangular faces.  
However,  many more  classes arising in operations research remain.  

An affirmative answer to Question ~\ref{applic-in-our-setting} could  contribute a new perspective for particular classes of polytopes   to  the  understanding of 
 the widely observed phenomenon 
that the simplex method typically  is much  more efficient 
in practice in real-world applications  than is predicted by worst case analysis.  
The simplex method is already well known  to have polynomial  time average case complexity in many cases;  
Spielman and Teng proved the much stronger result 
that the simplex method has so-called polynomial smoothed complexity (see  \cite{SpT}), i.e. has polynomial complexity in cases that interpolate  between average case and worst case.  

 One way to view  much of our work in this paper is as an attempt  to better understand conceptually what properties of  a polytope $P$ and cost vector $\bc $ will 
 suffice to ensure that directed paths in $G(P,\bc )$ may never revisit any faces that they have left, thereby also ensuring for such $P$ and $\bc $ that  
 the simplex method is efficient even in the worst case, i.e.  no matter what  choices of  pivots are made.   Others such as in  ~\cite{BDL}  also have recently examined (and made headway) on  the closely related question of proving  that particular  classes of polytopes and cost vectors arising naturally in operations research   will make linear programming efficient for all choices of pivot rule when certain parameters are held fixed.
  There is certainly much more to be done in this area, including the question of resolving our main  conjecture which in spite of the result  described  in the upcoming  appendix  does still remain an open question.

\section{Appendix by Dominik Preu\ss : Monotone Hirsch conjecture for simple polytopes whose 1-skeleta are Hasse diagrams of lattices}
Here we prove that if $G(P,\mathbf{c})$ is the Hasse diagram of a lattice, any directed path in $G(P,\mathbf{c})$ has length at most $n-d$. In particular, $P$ satisfies the monotone Hirsch conjecture.

To that end, recall from lattice theory that an element $b$ of a lattice $L$ is called \emph{join-irreducible} iff $b = \bigvee_{a \in S} a$ for a set $S \subseteq L$ implies $b \in S$. It makes sense to define the join of the empty set as the unique minimal element of the lattice, hence that element is not join-irreducible by definition. We shall need the following property of join-irreducibles:
\begin{lem}[\cite{st}, Chapter 3.4]
An element of a finite lattice is join-irreducible iff it covers precisely one element.
\end{lem}
As an immediate consequence of Lemma 1, we have:
\begin{lem}
Let $P$ be a simple polytope and $\mathbf{c}$ a generic cost vector such that $G(P,\mathbf{c})$ is the Hasse diagram of a lattice. Then a vertex $v$ is join-irreducible iff it is distinct from $\hat{0}$ \emph{and} is the unique source of a facet.
\end{lem}
\begin{proof}
If $v$ is not equal to $\hat{0}$, then there must be a directed path from $\hat{0}$ to $v$, since $\hat{0}$ is a source of the whole graph. Hence there must be an edge which ends in $v$, which means that $v$ covers at least one other lattice element. On the other hand, if $v$ is the source of a facet $F$, then it must be the initial vertex of every incident edge that belongs to $F$. There are $d-1$ such edges, where $d$ is the dimension of $P$ (since $P$ is simple). Since $v$ is incident to a total of $d$ edges (again, since $P$ is simple), only one more edge remains that can end in $v$. Since there is at least one such edge, there is \emph{precisely} one such edge. Hence $v$ covers precisely one lattice element, and therefore, it is join-irreducible by Lemma 7.1.

Conversely, suppose $v$ is join-irreducible. Then it covers precisely one lattice element/vertex $u$. Therefore, except for the edge $\{u,v\}$, all edges incident to $v$ point away from $v$. Since $P$ is simple, there is a facet $F$ for every set $X$ of $d-1$ edges incident to $v$ such that $X$ are precisely those edges incident to $v$ that lie in $F$. Therefore, there is a facet in which all edges point away from $v$, implying that $v$ is the source of that facet.
\end{proof}
For the proof of our main theorem, we need one further purely lattice-theoretic result:
\begin{lem}
If $b,c$ are elements of a finite lattice  $L$ such that $b$ covers $c$, then there is a join-irreducible $j \in L$ such that $c \vee j = b$ (where $j$ is not necessarily distinct from $b$).
\end{lem}
\begin{proof}
Since every element of a finite lattice is a join of join-irreducibles, we may choose a minimal set $S$ of join-irreducibles such that $b = c \vee \bigvee_{a \in S} a$. If $S$ contains only one join-irreducible, we are done. Otherwise, choose $j \in S$ and consider the element $c \vee \bigvee_{a \in S \setminus \{j\}} a$. This must be $\geq c$ and $\leq b$, which means that it is either equal to $b$ or to $c$, since $b$ covers $c$. If it were equal to $b$, then this would contradict the minimality of $S$. If it were equal to $c$, then $c \vee j = c \vee (\bigvee_{a \in S \setminus \{j\}} a) \vee j = c \vee \bigvee_{a \in S} a = b$, and so $\{j\}$ would also be a proper subset of $S$ whose join with $c$ is $b$, again contradicting the minimality of $S$.
\end{proof}
Our main theorem now is:
\begin{thm}
Let $P$ be a simple polytope and $\mathbf{c}$ a generic cost vector such that $G(P,\mathbf{c})$ is the Hasse diagram of a lattice. Then every directed path in $G(P,\mathbf{c})$ has length at most $n-d$, where $n$ is the number of facets and $d$ the dimension.
\end{thm}
\begin{proof}
Suppose we are given a directed path consisting of vertices $v_0,\dots,v_m$. Then by Lemma 7.3, there is a join-irreducible $j_i$ for every $i > 0$ such that $v_i = v_{i-1} \vee j_i$. In other words, $v_i = v_0 \vee \bigvee_{k=1}^i j_k$. It follows that all the $j_k$ must be distinct, since $v_i = v_0 \vee \cdots \vee j_k \vee \cdots \vee j_{i-1} \vee j_i$ and $j_k = j_i$ imply together with the idempotence of the join operation that \begin{multline*} v_i = v_0 \vee \cdots \vee j_k \vee \cdots \vee j_{i-1} \vee j_i \\ = v_0 \vee \cdots \vee j_k \vee \cdots \vee j_{i-1} \vee j_k \\ = v_0 \vee \cdots \vee j_k \vee \cdots \vee j_{i-1} = v_{i-1}\end{multline*}
Therefore the length of a path is at most the number of join-irreducibles. But by Lemma 7.2, there is precisely one join-irreducible for every facet \emph{except} the $d$ facets whose source is $\hat{0}$, so the number of join-irreducibles is $n-d$.
\end{proof}

\end{document}